\theoremstyle{plain}
\newtheorem{lemma}{Lemma}
\newtheorem{corollary}{Corollary}
\newtheorem{proposition}{Proposition}
\theoremstyle{definition}
\newtheorem{definition}{Definition}
\theoremstyle{remark}
\newtheorem{remark}{Remark}
\newcommand{\Z}{\mathbb{Z}}
\newcommand{\W}{\mathcal{W}}
\newcommand{\Oo}{\mathcal{O}}
\newcommand{\C}{\mathbb{C}}
\numberwithin{equation}{section} 
\newcommand{\nn}{\nonumber \\}
 \newcommand{\res}{\mbox{\rm Res}}
\renewcommand{\hom}{\mbox{\rm Hom}}
\newcommand{\wt}{\mbox{\rm wt}\ }
\newcommand{\N}{\mathbb{N}}
\newcommand{\F}{\mathcal{F}}
\newcommand{\one}{\mathbf{1}}
\begin{document}
\title[The product on $\W$-spaces of rational forms]     
{The product on $\W$-spaces of rational forms}  
\author{A. Zuevsky}
\address{Institute of Mathematics \\ Czech Academy of Sciences, \\ Zitna 25, 
11567 \\ Prague\\ Czech Republic}     

\email{zuevsky@yahoo.com}
\begin{abstract}
 Let $V$ be a quasi-conformal grading-restricted vertex algebra, $W$ be its module,  
 and $\W_{z_1, \ldots, z_n}$ be the space 
of rational differential forms with complex parameters $(z_1, \ldots, z_n)$
 for $n \ge 0$.  
Using geometric interpretation in terms of two Riemann spheres sewing 
we define a product of elements of two spaces $\W_{x_1, \ldots, x_k}$ 
and $\W_{y_1, \ldots, y_n}$, and study its properties. 
A product is introduced also for elements of 
 two spaces $C^k_m(V, \W)$ $\times$ $C^n_{m'}(V, \W)$ $\to$ $C^{k+n}_{m+m'}(V, \W)$  
of the corresponding chain complex of rational differential forms 
invariant with respect to transformations of complex parameters.  
\end{abstract}
\keywords{Vertex algebras; Riemann surfaces; product of $\W$-spaces; chain complexes}
\vskip12pt  

\maketitle
\section{Conflict of Interest}
The author states that: 

1.) The paper does not contain any potential conflicts of interests. 
\section{Data availability statement}
The author confirms  that: 

1.) All data generated or analyzed during this study are included in this published article. 

2.)   Data sharing not applicable to this article as no datasets were generated or analysed during the current study.
\section{Introduction}
\label{introduction}
The problem of defining a product on the space $\W_{z_1, \ldots, z_n}$ (or $\W$-spaces) 
of rational differential forms 
(and, in particular, on $C^n_m(V, \W)$-spaces of corresponding chain complex)   
is very important for cohomology theory of vertex algebras, continual Lie algebras, 
 theory of integrable models,  
 as well as for further applications to 
cohomology of smooth manifolds.  
A cohomology theory for grading-restricted vertex algebras was introduced in \cite{Huang}.  
Vertex algebras, generalizations of ordinary Lie algebras, 
are essential in conformal field theory   
\cite{FMS}, and it is a rapidly developing 
field of studies. 
Algebraic nature of methods applied in this field helps to understand and compute  
the structure of vertex algebra characters \cite{FMS, H2, Zhu, FHL, BZF}.  
On the other hand, the geometric side of a vertex algebra characters 
is in association of their formal parameters with   
local coordinates on a complex variety.  
Depending on geometry of a manifold, 
one can obtain various consequences for a vertex algebra 
and its space 
of characters.  Vice-versa, one can study geometric property of a manifold by using 
algebraic nature of a vertex algebra attached. 
 
For purposes of cohomology invariants construction for vertex algebras,  
it is important to define products of elements of chain-cochain complex spaces.   
The natural and well-developed theory of vertex operator algebra correlation functions 
\cite{MT1, MT2, MT3, TZ, TZ2, HT, MT4, GT1, GT2}  
considered on higher genus Riemann surfaces \cite{Y} and  
constructed from lower genus Riemann surfaces  
proved its convenience for computations, and 
served as a motivation for our construction of a product of rational forms. 
In that direction, an extremely difficult question of 
composability with vertex operators occurs.    
For the cohomology theory of vertex algebras, 
one has to assume that the chain-cochains 
are composable with vertex operators which assumes convergence.   
Especially, 
 when we want to compute  
 cohomology of a vertex algebra, we  
have to deal with the 
convergence problem first.
In the case of grading-restricted vertex algebras,  
 the difficulty 
is that chain-cochains are not represented by 
vertex or intertwining operators.  
The techniques for vertex operators 
or intertwining operators in general do not work.
The aim of this paper is to develop such new techniques.  

For products of spaces of chain-cochains, 
we propose to involve the geometric procedure \cite{Y} of sewing of Riemann surfaces 
as auxiliary model spaces in a geometric interpretation of algebraic products of spaces 
associated to vertex algebras.    
Similar to various 
other structures in the theory of vertex
operator algebras, this is  
not a usual associative product. 
The product that occurs is parametrized by a nonzero complex number $\epsilon$ 
identified to the complex parameter of 
the sewing procedure involved.  
More generally, the product is constructed from two Riemann 
spheres with a collection of marked points with  
 local coordinates vanishing at these points.  
The same scheme works, 
 for example, for tensor products of modules which are in fact parametrized by
 such geometric objects.
 Because of this, the existence of such products
 involves the convergence. 
In addition to that, 
 a vertex operator algebra must satisfy some conditions in order for such
convergence to hold. 

In this paper we introduce the product of elements of $\W$-spaces of rational differential 
forms  
for a grading-restricted vertex algebra.    
For the construction of complexes (cf. Section \ref{application})  
we make use of maps from tensor powers of $V$ to the space 
$\W_{z_1, \dots, z_n}$ 
 to define cochains in vertex algebra cohomology theory.
 For that purpose, in particular, to define coboundary operators,  
we have to compose chain-cochains with vertex operators. However,  
 the images of vertex operator maps in general do not belong to 
algebras or their modules.
 Such objects 
belong to corresponding algebraic completions which constitute 
  one of the most subtle features of the theory of vertex algebras.
 Because of this, we might not be able to compose vertex operators directly. 
 In order to overcome this problem, 
 one first writes a series by projecting 
 an element of the algebraic completion 
of an algebra or a module to its homogeneous components.  
 Then we compose these homogeneous components with vertex operators,  
and take formal sums.
 If such formal sums are absolutely convergent, then these operators 
can be composed and can be used in constructions. 
 
The plan of the paper is the following. 
In Section \ref{valued} we recall the definition of the space of $\W$-valued rational forms
for grading-restricted vertex algebras, and remind properties of their elements. 
In Section \ref{product} we introduce a product for elements 
of two $\W_{z_1, \ldots, z_n}$-spaces.   
In Section \ref{properties} we study properties of the resulting product.   
In Section \ref{application} we recall the definition and properties of    
spaces $C^n_m(V, \W)$ of 
the chain-cochain complex for a grading-restricted vertex algebra.  
In Section \ref{productc} we define the product for elements of   
 $C^n_m(V, \W)$-spaces and study its properties. 
In Section \ref{example} we consider the particular case of a short exceptional complex 
associated to certain $C^n_m(V, \W)$ subspaces.  
In Subsection \ref{appi} we provide a specific example of application of our product 
to cohomology invariants for vertex algebras.  
In Appendixes we provide the material needed for 
the construction of the product for $\W$-spaces.  
In Appendix \ref{grading} we recall the notion of a quasi-conformal
 grading-restricted vertex algebra.  
 In Appendix \ref{sphere} we describe the geometric procedure 
of forming a Riemann sphere by sewing  
two initial Riemann spheres.
Finally, Appendix \ref{proof} contains a proof of Proposition \ref{nezc}. 
\section{Spaces of $\W$-valued rational forms}
\label{valued}
\subsection{The space $\W$ of rational forms} 
\label{valuedef}
Part of notions and notations in this Subsection originates from \cite{Huang}.  
 We define the configuration spaces: 
\[
F_n\C=\{(z_1, \dots, z_n)\in \C^n \;|\; z_i\ne z_j, i\ne j\}, 
\]
for $n\in \Z_+$.
Recall definitions and properties given in Appendix \ref{grading}.  
Let $V$ be a grading-restricted 
vertex algebra, and $W$ a grading-restricted generalized $V$-module.   
By $\overline{W}$ we denote the algebraic completion of $W$, 
\[
\overline{W}=\prod_{n\in \mathbb C}W_{(n)}=(W')^*. 
\]
Let $w'\in W'$ be an arbitrary element of $W'$ dual to $W$ with respect to the 
canonical pairing $\langle . , . \rangle$ with the dual space of $W$. 
\begin{definition}
\label{valuedrational}
 A $\overline{W}$-valued rational function $f$ in $(z_1, \dots, z_n)$  
with the only possible poles at 
$z_i=z_j$, $i\ne j$,  
is a map 
\begin{eqnarray*}
 f:F_n\C &\to& \overline{W},   
\\
 (z_1, \dots, z_n) &\mapsto& f(z_1, \dots, z_n),    
\end{eqnarray*} 
such that for any $w'\in W'$,  
\begin{equation}
\label{deff}
R(z_1, \ldots, z_n)= R\left(\langle w', f(z_1, \dots, z_n) \rangle \right), 
\end{equation} 
is a rational function in $(z_1, \dots, z_n)$   
with the only possible poles at
$z_i=z_j$, $i\ne j$.  
In this paper, such a map is called $\overline{W}$-valued rational function 
in $(z_1, \dots, z_n)$ with possible other poles. 
 The space of $\overline{W}$-valued rational functions is denoted by 
$\overline{W}_{z_1, \dots, z_n}$. 
\end{definition} 
 Here $R(.)$ denotes the following.    
Namely, if a meromorphic function $f(z_1, \dots, z_n)$ on a region in $\C^n$   
can be analytically extended to a rational function in $(z_1, \dots, z_n)$, 
then the notation $R(f(z_1, \dots, z_n))$ is used to denote such rational function.  
Note that a set of a grading-restricted vertex algebra elements $(v_1, \ldots, v_n)$ 
associated with the  
corresponding $(z_1, \ldots, z_n)$ 
plays the role of non-commutative parameters for a function $f$ in \eqref{deff}.   
Recall (Appendix \ref{grading}) the definition of 
a quasi-conformal grading-restricted vertex algebra $V$.   
Let us introduce the definition of a $\W_{z_1, \ldots, z_n}$-space: 
\begin{definition}
\label{wspace}
We define the space $\W_{z_1, \dots, z_n}$ of 
 $\overline{W}_{z_1, \dots, z_n}$-valued rational forms    
\begin{eqnarray}
\label{bomba}
&& 
\F \left(v_1, z_1; \ldots; 
  v_n, z_n\right)
\nn
 && \qquad 
= \Phi \left(dz_1^{{\rm \wt}(v_1)} \otimes v_{1}, z_1; \ldots;
 dz_n^{{\rm \wt}(v_n)} \otimes  v_n, z_n\right),  
\end{eqnarray}
formed by all $\overline{W}$-valued rational functions $\Phi$
with each vertex algebra element entry $v_i$, $1 \le i \le n$  
of a grading-restricted quasi-conformal vertex algebra $V$   
 tensored with power $\wt(v_i)$-differential of 
the corresponding formal parameter $z_i$. 
We call $\W$ the space of $\W_{z_1, \dots, z_n}$ for $n=0$.    
\end{definition}
Recall definitions and properties of Subsection \ref{perdozo} of Appendix \ref{grading}. 
Let us denote $\Oo^{(n)}$ the space of formal Taylor series in $n$ variables. 
In Appendix \ref{proof} we give a proof of the following 
\begin{proposition}
\label{ndimwinv}
For primary vectors of a quasi-conformal grading-restricted vertex algebra $V$,  
 the form \eqref{bomba} 
is invariant with respect to elements 
\[
\left(\rho_1(z_1, \ldots, z_n), \ldots, \rho_n(z_1, \ldots, z_n)\right), 
\]
 of the group 
${\rm Aut}_{z_1, \ldots, z_n}\Oo^{(n)}$,   
 i.e., 
under  
the changes 
\[
z_i \mapsto z_i'=\rho_i(z_1, \ldots, z_n),  
\]
 of formal parameters  
$(z_1, \dots, z_n)$.  
\end{proposition}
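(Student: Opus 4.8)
The plan is to reduce the group-invariance statement to an infinitesimal computation and then to exploit the defining property of primary vectors together with the quasi-conformal structure recalled in Appendix \ref{grading}. Every element $(\rho_1,\dots,\rho_n)$ of $\Aut_{z_1,\dots,z_n}\Oo^{(n)}$ is obtained by exponentiating elements of the Lie algebra of formal vector fields spanned by the $z^{\mathbf{k}}\partial_{z_i}$, so it suffices to show that the first-order variation of \eqref{bomba} under each such generator vanishes; the full statement then follows by integrating the one-parameter subgroups, provided the exponentiated action is well defined on $\overline{W}$, which is handled by projecting onto homogeneous components as explained in Section \ref{introduction}.

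First I would record how the quasi-conformal structure realizes a coordinate change $z_i\mapsto \rho_i$ on the correlation function $\Phi$. The change of the local coordinate centered at $z_i$ acts on the $i$-th slot through an operator $U(\cdot)$ assembled from the modes $L(j)$, $j\ge 0$, of the quasi-conformal structure, while the insertion point is moved to the new coordinate. For a primary vector one has $L(j)v_i=0$ for $j\ge 1$ and $L(0)v_i=\wt(v_i)\,v_i$, so among all these modes only $L(0)$ acts nontrivially; consequently $U$ multiplies the $i$-th slot by the Jacobian power $(\partial_{z_i}\rho_i)^{\wt(v_i)}$, up to the fixed convention of Appendix \ref{grading}.

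The key step is the cancellation against the differential factors in \eqref{bomba}. Under $z_i\mapsto z_i'=\rho_i$ the diagonal part of the transformation of $dz_i^{\wt(v_i)}$ contributes the inverse power $(\partial_{z_i}\rho_i)^{-\wt(v_i)}$, so that the Jacobian produced by the primary field is exactly absorbed and $\F$ expressed in the variables $(z_1',\dots,z_n')$ coincides with $\F$ in $(z_1,\dots,z_n)$. Equivalently, at the infinitesimal level $z_i\mapsto z_i+\epsilon\,\ell_i(z_1,\dots,z_n)$, the scaling contribution $\wt(v_i)\,(\partial_{z_i}\ell_i)$ coming from $L(0)$ is cancelled by the opposite-sign variation $-\wt(v_i)\,(\partial_{z_i}\ell_i)$ of $dz_i^{\wt(v_i)}$, while the displacement of the insertion point is nothing but the intrinsic change of the argument of $\Phi$; hence the total first-order variation of $\F$ is zero.

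The main obstacle I expect is twofold. Geometrically, the transformation law for primary fields is \emph{slot-wise} and sees only the diagonal derivative $\partial_{z_i}\rho_i$, whereas a general element of $\Aut_{z_1,\dots,z_n}\Oo^{(n)}$ mixes the variables, so one must argue that the off-diagonal contributions $\partial_{z_j}\rho_i$ ($i\ne j$), and correspondingly the generators $z_j\partial_{z_i}$, do not spoil the cancellation; this is where the sewing interpretation of Appendix \ref{sphere} together with the rationality built into Definition \ref{valuedrational} is needed to justify applying the primary transformation law puncture by puncture. Analytically, one must check that exponentiating the generators yields a convergent operator on the algebraic completion $\overline{W}$, and it is precisely the homogeneous-component argument together with the grading restriction that makes the resulting formal sums meaningful and legitimizes integrating the infinitesimal invariance to the full group $\Aut_{z_1,\dots,z_n}\Oo^{(n)}$.
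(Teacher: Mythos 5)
Your diagonal mechanism (primary vectors kill $L(k)$ for $k\ge 1$, the $L(0)$ Jacobian cancels against the transformation of $dz_i^{\wt(v_i)}$, translations are absorbed by the $L_V(-1)$-derivative property) is correct as far as it goes, and your appeal to quasi-conformality to exponentiate the infinitesimal action agrees with the paper. But your proposal has a genuine gap at exactly the point you flag and then wave away: the off-diagonal part of $\Aut_{z_1, \ldots, z_n}\Oo^{(n)}$. Take the generator $z_j^k\partial_{z_i}$ with $j\ne i$, i.e. $z_i\mapsto z_i+\epsilon z_j^k$, all other coordinates fixed. The diagonal Jacobian $\partial_{z_i}(\epsilon z_j^k)$ vanishes, so on the field side your computation produces only the translation $e^{\epsilon z_j^k L_V(-1)}$ in the $i$-th slot, which is the intrinsic displacement of the argument of $\Phi$ and contributes nothing to cancel; meanwhile the differential factor varies by a term proportional to $\epsilon\, k z_j^{k-1}\, dz_i^{\wt(v_i)-1}dz_j$, a mixed differential that is not of the form $dz_i^{\wt(v_i)}$ at all. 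Your $L(0)$-versus-differential cancellation never sees these terms, so your argument establishes invariance only under the subgroup where each $\rho_i$ depends on $z_i$ alone (the product of one-variable groups, i.e. the classical statement of \cite{BZF}), not under the full group in the proposition. The sentence invoking "the sewing interpretation and rationality" is not an argument: no computation in your proposal touches $\partial_{z_j}\rho_i\, dz_j$ for $j\ne i$.

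Handling those terms is precisely where the paper's proof does its only non-classical work. It expands the transformed differentials as $dw_j=\sum_i \partial_{z_i}\rho_j\, dz_i$ \eqref{shifty}, assembles the form into a vector $\overline{\Phi}$ whose entries carry the cyclic assignments of the differentials $dz_{i(j)}$ to the slots, and represents the coordinate change by a matrix of operators ${\rm R}(\rho_1,\ldots,\rho_n)=\left[\widehat\partial_J\rho_{i(I)}\right]$ \eqref{rovno}, whose entries \eqref{hatrho} are exponentials built from $L_W(-1)$; the $L_V(-1)$-derivative property \eqref{lder1} together with linearity of $\Phi$ then converts each off-diagonal derivative term into a translation acting inside the corresponding argument, after which the primary-vector torsor argument is applied slot by slot. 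Whatever one thinks of the rigor of that construction, it is the device that absorbs the mixing terms, and your proposal contains no counterpart to it, so it cannot be completed as written into a proof of invariance under all of $\Aut_{z_1,\ldots,z_n}\Oo^{(n)}$.
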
 
\subsection{Properties of rational functions for $\W$-valued elements}  
\label{properties} 
Let $V$ be a grading-restricted vertex algebra and $W$ a grading-restricted 
generalized $V$-module  
(cf. Appendix \ref{grading}).  
Let us give here modifications of definitions and facts 
about matrix elements for a grading-restricted vertex algebra.  
\begin{definition}
\label{wspace}
For $n\in \Z_+$,  
a map 
\[
\F(v_1, z_1;  \ldots ; v_n,  z_n) 
\in V^{\otimes n}\to 
\W_{z_1, \dots, z_n},  
\]
 is said to have
the $L_V(-1)$-derivative property if
\begin{equation}
\label{lder1}
(i) \qquad 
\partial_{z_i} \F (v_1, z_1;  \ldots ; v_n,  z_n)  
=  
\F(v_1, z_1;  \ldots; L_V(-1)v_i, z_i; \ldots ; v_n,  z_n),   
\end{equation}
for $i=1, \dots, n$, $(v_1, \dots, v_n) \in V$, $w'\in W'$,  
and  
\begin{eqnarray}
\label{lder2}
(ii) \qquad \sum\limits_{i=1}^n\partial_{z_i} 
\F(v_1, z_1;  \ldots ; v_n,  z_n)=  
 L_W(-1).\F(v_1, z_1;  \ldots ; v_n,  z_n).  
\end{eqnarray} 
\end{definition}
In \cite{Huang} we find the following 
\begin{proposition}
Let $\F$ be a map having the $L_W(-1)$-derivative property. 
Then for $(v_{1}, \dots, v_{n})\in V$,   
 $(z_1, \dots, z_n)\in F_n\C$, $z\in \C$ such that 
$(z_1+z, \dots,  z_n+z)\in F_n\C$,  
\begin{eqnarray}
\label{ldirdir}
e^{zL_W(-1)}  
\F \left(v_1, z_1; \ldots; v_n, z_n \right)  
= 
\F(v_1, z_1+z ; \ldots; v_n,  z_n+z),    
\end{eqnarray}  
and $1\le i\le n$ such that
\[
(z_1, \dots, z_{i-1}, z_i+z, z_{i+1}, \dots, z_n)\in F_n\C, 
\]
the power series expansion of 
\begin{equation}
\label{expansion-fn} 
\F(v_1, z_1;  \ldots; v_{i-1}, z_{i-1};  v_i, z_i+z; v_{v+1}, z_{i+1}; \ldots  v_n, z_n),   
\end{equation}
in $z$ is equal to the power series
\begin{equation}
\label{power-series}  
\F\left(v_1 z_1; \ldots;  v_{i-1}, z_{i-1}; e^{zL_V(-1)}v_i, z_i;   
 v_{i+1}, z_{i+1};  \ldots; v_n, z_n\right), 
\end{equation}
in $z$.
In particular, the power series \eqref{power-series} in $z$ is absolutely convergent
to \eqref{expansion-fn} in the disk $|z|<\min_{i\ne j}\{|z_i-z_j|\}$.   
\end{proposition}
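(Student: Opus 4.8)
The plan is to handle both assertions by a single device: realize each shift as a one-parameter deformation in $z$, differentiate repeatedly in $z$, and convert every $z$-derivative into an $L(-1)$-insertion by means of the $L_W(-1)$-derivative property \eqref{lder1}--\eqref{lder2}. Everything is done weakly, i.e.\ after pairing against an arbitrary $w'\in W'$, so that $\langle w', \F(\cdots)\rangle$ is a genuine rational function of the parameters, analytic off the diagonals $z_i=z_j$; this is what licenses term-by-term differentiation and the convergence of the Taylor series that appear.

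First I would establish the simultaneous translation \eqref{ldirdir}. Set $\phi(z)=\F(v_1,z_1+z;\ldots;v_n,z_n+z)$, defined for every $z$ with $(z_1+z,\ldots,z_n+z)\in F_n\C$. By the chain rule together with \eqref{lder2} evaluated at the shifted configuration (where the identity of functions persists), $\frac{d}{dz}\langle w',\phi(z)\rangle=\sum_{i=1}^{n}\langle w',\partial_{z_i}\F\rangle=\langle w', L_W(-1)\phi(z)\rangle$, and iterating gives $\left(\frac{d}{dz}\right)^{k}\langle w',\phi(z)\rangle=\langle w', L_W(-1)^{k}\phi(z)\rangle$. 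Since the simultaneous shift leaves every difference $(z_i+z)-(z_j+z)=z_i-z_j$ constant, the potential poles of the matrix element cancel and $\langle w',\phi(z)\rangle$ is a polynomial in $z$ (grading-restriction forces $\langle w', L_W(-1)^{k}\phi(0)\rangle=0$ for $k$ large); its Taylor coefficients are $\frac{1}{k!}\langle w', L_W(-1)^{k}\phi(0)\rangle$, so that $\langle w',\phi(z)\rangle=\langle w', e^{zL_W(-1)}\phi(0)\rangle$. As this holds for all $w'\in W'$ and $\overline{W}=(W')^{*}$, we obtain $\phi(z)=e^{zL_W(-1)}\F(v_1,z_1;\ldots;v_n,z_n)$, which is \eqref{ldirdir}.

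Next I would treat the one-variable shift. Set $\psi(z)=\F(v_1,z_1;\ldots;v_i,z_i+z;\ldots;v_n,z_n)$. The chain rule and \eqref{lder1} give $\frac{d}{dz}\psi(z)=\F(v_1,z_1;\ldots;L_V(-1)v_i,z_i+z;\ldots;v_n,z_n)$, and iterating, $\left(\frac{d}{dz}\right)^{k}\psi(z)=\F(v_1,z_1;\ldots;L_V(-1)^{k}v_i,z_i+z;\ldots;v_n,z_n)$. Evaluating at $z=0$ produces the Taylor coefficients, so the power series expansion of \eqref{expansion-fn} in $z$ is $\sum_{k\ge 0}\frac{z^{k}}{k!}\F(\ldots;L_V(-1)^{k}v_i,z_i;\ldots)$. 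Since $\F$ is multilinear in its vector entries, this is precisely the expansion of $\F(\ldots;e^{zL_V(-1)}v_i,z_i;\ldots)$, i.e.\ of \eqref{power-series}, which proves the equality of the two power series.

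For the concluding convergence claim, fix $w'$; then $\langle w',\F(v_1,z_1;\ldots;v_i,z_i+z;\ldots;v_n,z_n)\rangle$ is a rational function of $z$ whose only poles sit at $z=z_l-z_i$ with $l\ne i$, so it is analytic on $|z|<\min_{i\ne j}|z_i-z_j|$ and its Taylor series --- precisely the series found above --- converges absolutely to it there. I expect the main obstacle to be the rigorous handling of the exponentials $e^{zL_W(-1)}$ and $e^{zL_V(-1)}$ on the completion $\overline{W}$: a priori these are only formal, and one must justify interchanging the summation both with the pairing $\langle w',\cdot\rangle$ and with the multilinear map $\F$, and must identify the resulting functional on $W'$ with an element of $\overline{W}=(W')^{*}$. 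The resolution is to argue entirely through the pairings, exploiting that each $\langle w',\F(\cdots)\rangle$ is an honest rational function whose convergent Taylor coefficients are generated, via \eqref{lder1}--\eqref{lder2}, by iterated $L(-1)$-insertions.
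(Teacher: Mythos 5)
Your proof is correct, and it is essentially the standard argument for this result: the paper itself gives no proof (the proposition is quoted from \cite{Huang}), and the proof there proceeds exactly as yours does --- pair with an arbitrary $w'\in W'$, observe that under the simultaneous shift every pole locus $z_i-z_j$ is $z$-independent so the matrix element is a polynomial in $z$, and then apply Taylor's theorem together with the $L(-1)$-derivative properties \eqref{lder1}--\eqref{lder2}, interpreting the exponentials weakly through the pairing and identifying the result as an element of $\overline{W}=(W')^{*}$. The steps you leave implicit --- iterating the first-order identity to $\frac{d^{k}}{dz^{k}}\langle w',\phi(z)\rangle=\langle w',L_{W}(-1)^{k}\phi(z)\rangle$ (e.g.\ by passing $L_{W}(-1)$ to its adjoint on $W'$), and the remark that the one-variable shift has poles only at $z=z_{l}-z_{i}$, $l\ne i$, so the stated disk $|z|<\min_{i\ne j}|z_{i}-z_{j}|$ sits inside the actual disk of analyticity --- are routine and consistent with the rest of your argument.
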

One states   
\begin{definition}
A map 
\[
\F: V^{\otimes n} \to \W_{z_1, \dots, z_n} 
\]
 has the $L_W{(0)}$-conjugation property if for $(v_1, \dots, v_n) \in V$,  
$(z_1, \dots, z_n)\in F_n\C$, and $z\in \C^\times$, such that  
$(zz_1, \dots, zz_n)\in F_n\C$, 
\begin{eqnarray}
\label{loconj} 
 z^{L_W(0)}   
\F \left(v_1, z_1; \ldots; v_n, z_n \right) 
=
 \F\left(z^{ L_V{(0)} } v_1, zz_1;  \ldots ;  z^{L_V{(0)} } v_n,  zz_n\right). 
\end{eqnarray} 
\end{definition}
Let $S_n$ be the permutation group.  
 One defines the action of $S_n$ on the space $\hom(V^{\otimes n},  
\W_{z_1, \ldots, z_n})$ of maps from  
$V^{\otimes n}$ to $\W_{z_1, \dots, z_n}$ by    
\begin{equation}
\label{sigmaction}
\sigma(\F)(v_1, z_1; \ldots; v_n, z_n)   
=\F (v_{\sigma(1)}, z_{\sigma(1)};  \ldots v_{\sigma(n)}, z_{\sigma(n)}), 
\end{equation}  
for $\sigma\in S_n$, and $(v_1, \ldots, v_n)\in V$. 
We will use the notation $\sigma_{i_1, \ldots, i_n}\in S_n$, to denote 
the permutation given by $\sigma_{i_1, \ldots, i_n}(j)=i_j$,    
for $j=1, \dots, n$.

Finally, the following result was proved in \cite{FHL}: 
\begin{proposition}
\label{n-comm}
For $(v_1, \dots, v_n)\in V$, $w\in W$ and 
$w'\in W'$, 
\[
\langle w', Y_W(v_1, z_1)\ldots Y_W(v_n, z_n)w\rangle,  
\]
is absolutely convergent in the region $|z_1|>\ldots >|z_n|>0$  
to a rational function 
\[
R(\langle w', Y_W(v_1, z_1)\ldots Y_W(v_n, z_n)w\rangle),  
\]
in $(z_1, \dots, z_n)$ with the only possible poles at $z_i=z_j$, $i\ne j$, 
and $z_i=0$.  
The following commutativity holds: for 
$\sigma\in S_n$, 
\begin{eqnarray*}
\lefteqn{R(\langle w', Y_W(v_1, z_1)\ldots Y_W(v_n, z_n)w\rangle)}\nn 
&&=R(\langle w', Y_W(v_{\sigma(1)}, z_{\sigma(1)})\ldots 
Y_W(v_{\sigma(n)}, z_{\sigma(n)})w\rangle).
\end{eqnarray*}
\end{proposition}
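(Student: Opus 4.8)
The plan is to argue by induction on $n$, using the two‑variable rationality and locality axioms for the $V$‑module $W$ as the engine and controlling convergence through the grading restriction on $W$. First I would settle the base case $n=1$. Since $w'\in W'$ and $w\in W$ are each finite sums of homogeneous vectors, and each mode occurring in $Y_W(v_1,z_1)=\sum_m (v_1)_m\,z_1^{-m-1}$ shifts weights by a fixed amount, the matching of weights forces all but finitely many modes $(v_1)_m$ to contribute zero to $\langle w', Y_W(v_1,z_1)w\rangle$. Hence the $n=1$ expression is a finite Laurent expression in $z_1$, i.e.\ a rational function whose only possible pole is at $z_1=0$.

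For convergence in the general case I would insert, between consecutive vertex operators, the projections $P_r$ onto the homogeneous subspaces $W_{(r)}$, writing
\[
\langle w', Y_W(v_1,z_1)\cdots Y_W(v_n,z_n)w\rangle
=\sum_{r_1,\ldots,r_{n-1}} \langle w', Y_W(v_1,z_1)P_{r_1}\cdots P_{r_{n-1}}Y_W(v_n,z_n)w\rangle .
\]
Each summand is, up to a constant, a single monomial $\prod_i z_i^{a_i}$ whose exponents are fixed by the intermediate weights $r_1,\ldots,r_{n-1}$ together with the weights of the homogeneous components of $w'$ and $w$. Because $W$ is grading‑restricted — the weights are bounded below and each $W_{(r)}$ is finite‑dimensional — the iterated sum over $r_1,\ldots,r_{n-1}$ behaves, in the region $|z_1|>\cdots>|z_n|>0$, like a product of convergent geometric series in the successive ratios $|z_{i+1}/z_i|$, and is dominated there by a convergent majorant. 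This yields absolute convergence on the stated region.

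To obtain rationality with the claimed pole set I would again use induction. Assuming the statement for $n-1$ operators, the product $Y_W(v_2,z_2)\cdots Y_W(v_n,z_n)w$ defines, after the convergence just established, an element of $\overline{W}$ depending rationally on $(z_2,\ldots,z_n)$ with poles only at $z_i=z_j$ and $z_i=0$; projecting this element onto homogeneous components, pairing with $Y_W(v_1,z_1)$, applying the two‑variable rationality (a consequence of the Jacobi identity for the $V$‑module $W$) componentwise, and resumming using the convergence, reattaches the first operator and introduces only the additional poles $z_1=z_j$ and $z_1=0$. This reproduces exactly the pole structure $z_i=z_j$ ($i\ne j$) and $z_i=0$.

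Finally, for the commutativity under $\sigma\in S_n$ it suffices to verify the identity for adjacent transpositions, since these generate $S_n$ and the action in \eqref{sigmaction} is compatible with composition. For an adjacent swap the statement reduces to locality (weak commutativity): $\langle w',\cdots Y_W(v_i,z_i)Y_W(v_{i+1},z_{i+1})\cdots w\rangle$ and the series with $(v_i,z_i)$ and $(v_{i+1},z_{i+1})$ interchanged converge in different regions but are analytic continuations of one another, so clearing the factor $(z_i-z_{i+1})^N$ produced by locality shows that both have the same rationalization $R(\,\cdot\,)$; composing adjacent swaps then gives the identity for a general $\sigma$. The main obstacle I anticipate is precisely this passage between the distinct regions of convergence: one must check that the uniform estimates coming from the grading restriction are strong enough to justify term‑by‑term analytic continuation and to guarantee that no new poles appear when the ordering of the $|z_i|$ is changed.
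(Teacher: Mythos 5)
The paper itself offers no proof of this proposition: it is quoted as a known result from \cite{FHL}, so your attempt has to be measured against the standard argument given there. Your treatment of the commutativity claim (reduce to adjacent transpositions, invoke locality, pass between regions by analytic continuation) is sound in outline, but your convergence step contains a genuine gap. After inserting the projections $P_{r_1},\dots,P_{r_{n-1}}$ you assert that the multi-sum ``behaves like a product of convergent geometric series'' because $W$ is grading-restricted. Grading restriction — finite-dimensionality of each $W_{(r)}$ and lower-boundedness of the grading — gives no bound whatsoever on the size of the coefficients $\langle w', Y_W(v_1,z_1)P_{r_1}\cdots P_{r_{n-1}}Y_W(v_n,z_n)w\rangle$; a priori these constants could grow faster than any geometric sequence in the intermediate weights, so the convergent majorant you invoke does not exist without further input. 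The domination is exactly what has to be proved, and it is a consequence of locality, not of the grading. The same defect undermines your inductive step for rationality: there you apply two-variable duality componentwise and then ``resum using the convergence,'' but an infinite sum of rational functions (one for each intermediate weight) need not be rational, and controlling its poles requires precisely the uniform estimates that are missing.

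The argument in \cite{FHL} closes this gap by a purely formal device that avoids estimates altogether. Weak commutativity supplies integers $N_{ij}$ such that the formal series
\begin{equation*}
\prod_{1\le i<j\le n}(z_i-z_j)^{N_{ij}}\,\langle w', Y_W(v_1,z_1)\cdots Y_W(v_n,z_n)w\rangle
\end{equation*}
is unchanged under any permutation of the pairs $(v_i,z_i)$. Comparing the truncation properties of the differently ordered products (lower truncation plus weight bookkeeping bounds the exponents below in one ordering and above in another), one concludes that this cleared series has only finitely many terms, i.e.\ it is a Laurent polynomial $p(z_1,\dots,z_n)$. The original series is then, by construction, the expansion of $p/\prod_{i<j}(z_i-z_j)^{N_{ij}}$ in the region $|z_1|>\cdots>|z_n|>0$, which yields absolute convergence, rationality, and the pole structure simultaneously; and since the same Laurent polynomial $p$ arises from every permuted product, the commutativity statement for all $\sigma\in S_n$ follows at once, with no separate analytic-continuation step. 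To repair your proof you should replace the geometric-series majorant (and the componentwise resummation in the induction) by this clearing-of-denominators argument, or by some other genuine use of locality; grading restriction alone cannot do the job.
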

\section{Product of spaces of $\W$-valued forms}
\label{product}
\subsection{Motivation and geometric interpretation} 
The structure of $\W_{z_1, \ldots, z_n}$-spaces 
is quite complicated and it is difficult to introduce algebraically a product 
of its elements.  
In order to define an appropriate product of two elements of 
$\W_{z_1, \ldots, z_n}$-spaces we first have to interpret  
them geometrically. 
Basically, a $\W_{z_1, \ldots, z_n}$-space must be associated with 
a certain model space, the algebraic $\W$-language should be  
transferred to a geometric one, two model spaces should be "related" appropriately, and,
 finally, a product should be 
defined. 

For two  
 $\W_{x_1, \ldots, x_k}$- and 
$\W_{y_1, \ldots,  y_n}$-spaces we first associate formal complex parameters   
in the sets 
$(x_1, \ldots, x_k)$ and $(y_1, \ldots, y_n)$  
to parameters of two auxiliary   
spaces. 
 Then we describe a geometric procedure to 
form a resulting model space 
by combining two original model spaces.  
Formal parameters of 
$\W_{z_1, \ldots, z_{k+n}}$ should be then identified with  
parameters of the resulting space.  

Note that 
according to our assumption, $(x_1, \ldots, x_k) \in F_k\C$, 
and $(y_1, \ldots, y_n) \in  F_n\C$.  
As it follows from definition of the configuration space $F_n\C$ given in Subsection 
\ref{valuedef}, 
 in the case of coincidence of two 
formal parameters they are excluded from $F_n\C$.  
In general, it may happen that some number $r$ 
of formal parameters of $\W_{x_1, \ldots, x_k}$ coincides with some  
$r$ formal parameters of $\W_{y_1, \ldots, y_n}$. Thus,   
we require that the set of formal parameters $(z_1, \ldots, z_{k+n-r})$ 
for the resulting model space 
 would belong to $F_{k+n-r}\C$. 
This leads to the fall off of the total number 
of formal parameters for the resulting model space  
$\W_{z_1, \ldots, z_{k+n-r}}$. 
In what follows we consider the case when 
all formal parameters $(x_1, \ldots, x_k)$ differ  
from formal parameters of $(y_1, \ldots, y_n)$.   
This singular case can then be treated
 similar to the ordinary one in lower dimension.  
\subsection{Definition of the 
product for $\W$-valued rational forms} 
\label{genus_two_n-point}
Recall the definition \eqref{wprop} of the intertwining operator 
$Y^W_{WV}$ given in Appendix \ref{grading}. 
We then formulate  
\begin{definition}
\label{duplodef}
For a quasi-conformal module $W$ of a grading-restricted vertex algebra $V$,  
 a set of quasi-primary $V$-elements $(v_1, \ldots,  
v_n )$, $(v'_1, \ldots,  v'_n )\in V$, 
 $\F (v_1, x_1$; $\ldots$ ; $v_k, x_k)$    
$\in \W_{x_1,  \ldots ,  x_k}$,       
 $\F ( v'_1, y_1; \ldots ; v'_n, y_n)$, $\in \W_{y_1,  \ldots ,  y_n}$, 
 introduce the $\epsilon$-product for $\epsilon=\zeta_1 \zeta_2$, for $|\zeta_a|>0$,
 $a=1$, $2$, 
\begin{equation}
\label{gendef}
\cdot_\epsilon: \W_{x_1, \ldots, x_k} \times \W_{y_1, \ldots, y_n} 
\to \W_{x_1, \ldots, x_k; y_1, \ldots, y_n},       
\end{equation}  
for $(x_1, \ldots, x_k; y_1, \ldots, y_n ) \in F_{k+n}\C$.     
For arbitrary $w' \in W'$, the product is associated to the form
\begin{eqnarray}
\label{Z2n_pt_eps1q1}
&& \mathcal R(x_1, \ldots, x_k; y_1, \ldots, y_n; \epsilon, \zeta_1, \zeta_2)  
\nn
 & &  =  \sum_{l\in \mathbb{Z} } \epsilon^l \sum_{u\in V_l}    
\langle w', Y^W_{WV}\left(  
\F (v_1, x_1;  \ldots; v_k, x_k), \zeta_1 \right) \; u \rangle  
\nn
& &
\qquad   \qquad 
\langle w', Y^W_{WV}\left(\F 
(v'_1, y_1; \ldots; v'_n, y_n),  \zeta_2 \right) \overline{u} \rangle,     
\end{eqnarray}
via \eqref{deff}, 
parametrized by  
$\zeta_a \in \C$, $|\zeta_a|>0$, $a=1$, $2$, 
for all pairs of matrix elements  
$\langle w'$, $Y^W_{WV}\left(  
\F (v_1, x_1;  \ldots; v_k, x_k), \zeta_1 \right) \; u \rangle$, 
$\langle w'$, $Y^W_{WV}\left(\F 
(v'_1, y_1; \ldots; v'_n, y_n),  \zeta_2 \right) \overline{u} \rangle$ 
such that \eqref{Z2n_pt_eps1q1} converges in $\epsilon$. 
The sum 
is taken over any $V_l$-basis $\{u\}$,  
where $\overline{u}$ is the dual of $u$ with respect to the 
canonical pairing $\langle . , . \rangle_\lambda$ \eqref{eq: inv bil form}  
 with the dual space of $V$, (see Appendix \ref{grading}).   
\end{definition}
By the standard reasoning \cite{FHL, Zhu},   
 \eqref{Z2n_pt_eps1q1} does not depend on the choice of a basis of $u \in V_l$, $l \in \Z$.   
In the case when multiplied forms $\F$ do not contain $V$-elements, 
i.e., for $\Phi$, $\Psi \in \W$,  
  \eqref{Z2n_pt_eps1q1} defines the product $\Phi \cdot_\epsilon \Psi$ associated to 
 a rational function: 
\begin{eqnarray}
\label{Z2_part}
{\mathcal R(\epsilon)}= \sum_{l \in \Z }  \epsilon^l  
 \sum_{u\in V_l } 
\langle w', Y^W_{WV}\left(  
\Phi, \zeta_1\right) \; u \rangle 
\langle w', Y^W_{WV}\left(
\Psi,   
\zeta_2 \right) \; \overline{u} \rangle,  
\end{eqnarray}  
which defines $\F(\epsilon) \in \W$ 
via $\mathcal R(\epsilon)=\langle w', \F(\epsilon)\rangle$. 
As we will see in Section \ref{application}, Definition \ref{duplodef}
 is also supported by Proposition  
\eqref{correl-fn}.  
\begin{remark}
Note that due to \eqref{wprop}, in 
Definition \ref{duplodef}, and in \eqref{Z2n_pt_eps1q1} in particular, it is assumed that 
$\F (v_1, x_1;  \ldots; v_k, x_k)$ and $\F(v'_1, y_1; \ldots; v'_n, y_n)$ 
 are composable with the $V$-module $W$ vertex operators 
$Y_W(u, -\zeta_1)$ and $Y_W(\overline{u}, -\zeta_2)$ correspondingly 
(see Section \ref{application} for the definition 
of composability).   
The product \eqref{Z2n_pt_eps1q1} is actually defined by sum of 
products of matrix elements of ordinary $V$-module $W$ vertex operators 
acting on $\W_{z_1, \ldots, z_n}$ elements.
 In what follows 
 we will see that, since $u \in V$ and $\overline{u} \in V'$ 
are connected  by \eqref{dubay}, $\zeta_1$ and $\zeta_2$ appear in a relation to each other. 
The form 
of the product defined above is natural in terms 
of the theory of characters for vertex operator algebras in conformal field theory 
\cite{TUY, FMS, Zhu, MT1, MT2, MT3, MT4, TZ, TZ2} and many others.    
\end{remark}
\subsection{Existence of the corresponding rational form for the $\epsilon$-product}  
In order to describe the $\epsilon$  
product of elements of two spaces $\W_{x_1, \ldots, x_k}$ 
and $\W_{y_1, \ldots, y_n}$ of rational $\W$-valued forms,    
we use a geometric interpretation \cite{H2, Y}.
Recall that a $\W_{z_1, \ldots, z_n}$-space is defined by means 
of matrix elements of the form \eqref{deff}.  
For a vertex algebra $V$, this corresponds \cite{FHL}   
to a matrix element of a number of $V$-vertex operators 
with formal parameters identified with local coordinates on a Riemann sphere. 
Geometrically, each space $\W_{z_1, \ldots, z_n}$ can be also associated to a Riemann sphere
 with a few marked points 
with local coordinates 
 vanishing at these points 
\cite{H2}. 
An extra point 
can be associated to a center of an annulus used in order 
to sew the sphere with another sphere.   
The product \eqref{Z2n_pt_eps1q1} has then a geometric interpretation.    
The resulting model space would also be associated 
to a Riemann sphere formed as a result of sewing procedure.  
In Appendix \ref{sphere} we describe explicitly
 the geometric procedure of sewing of two spheres \cite{Y}.   

Let us identify (as in \cite{H2, Y, Zhu, TUY, FMS, BZF}) 
two sets $(x_1, \ldots, x_k)$ and $(y_1, \ldots, y_n)$ of 
complex formal parameters,
with local
coordinates of two sets of points on the first and the second Riemann spheres correspondingly.   
Identify complex parameters $\zeta_1$, $\zeta_2$ of \eqref{Z2n_pt_eps1q1} with coordinates 
\eqref{disk} of 
the annuluses \eqref{zhopki}.  
After identification of annuluses $\mathcal A_a$ and $\mathcal A_{\overline{a}}$, 
$r$ coinciding coordinates may occur. This takes into account case of coinciding formal 
parameters. 

As we will see in the next Subsection, the product is defined 
by a sum of products of matrix elements  \cite{FHL} associated to each of two spheres.  
Such sum is supposed to describe a $\W$-valued rational differential form defined 
on the sphere formed 
as a result of geometric sewing \cite{Y} of two initial spheres. 
Since two initial spaces $\W_{x_1, \ldots, x_k}$ and $\W_{y_1, \ldots, y_n}$ 
are defined through rational-valued forms 
with the conditions of absolute convergence 
expressed by 
 matrix elements of the form \eqref{deff},  
it is then showed (Proposition \ref{derga}), that the resulting product defines a 
$\W_{x_1, \ldots, x_k; y_1, \ldots, y_n}$-valued rational  
form by means of a convergent matrix element on the resulting sphere. 
In what follows we prove the existence of such rational form.  
The complex sewing parameter,  parameterizing the module space of the sewing spheres,  
parametrizes also the product of the corresponding $\W$-spaces.   

In this Subsection and the next Section we formulate 
the results of this paper for the $\epsilon$-product of $\W$-spaces. 
\begin{proposition}
\label{derga}
The product \eqref{Z2n_pt_eps1q1} of elements of the spaces $\W_{x_1, \ldots, x_k}$ 
and $\W_{y_1, \ldots, y_n}$    
 corresponds to a rational form  
with only possible poles at $x_i=x_j$, $y_{i'}=y_{j'}$, and    
 $x_i=y_{j'}$, $1 \le i, i' \le k$, $1 \le j, j' \le n$.  
\end{proposition}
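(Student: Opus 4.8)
The plan is to establish convergence of the defining series \eqref{Z2n_pt_eps1q1} by reducing it, through the geometric sewing picture of Appendix \ref{sphere}, to a product of two absolutely convergent matrix elements, one attached to each sphere, and then summing over the sewing parameter $\epsilon$. First I would fix $w'\in W'$ and expand each factor using the intertwining operator $Y^{W}_{WV}$. By Proposition \ref{n-comm} (the $\mathrm{FHL}$ convergence and commutativity result), each matrix element $\langle w', Y^{W}_{WV}(\F(v_1,x_1;\ldots;v_k,x_k),\zeta_1)\,u\rangle$ is, for fixed $u\in V_l$, absolutely convergent to a rational function in $(x_1,\ldots,x_k)$ with the asserted poles at $x_i=x_j$ and at the sewing-point locus; and similarly for the second factor in $(y_1,\ldots,y_n)$. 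The poles $x_i=y_{j'}$ of the product arise only after identification of the two spheres via sewing, so they must be read off from the geometric identification of coordinates rather than from either factor separately.

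The main work is the double sum over $l\in\Z$ and over the $V_l$-basis $\{u\}$. My plan is to control this sum by the grading-restriction hypotheses on $V$: since $V$ is grading-restricted, each $V_l$ is finite-dimensional and $V_{(l)}=0$ for $l$ sufficiently negative, so the inner sum over $u\in V_l$ is a finite sum and the only genuine analytic issue is the convergence of $\sum_{l}\epsilon^l(\cdots)$. I would estimate the $l$-th term using the $L_W(0)$-conjugation property (the $z^{L_W(0)}$-covariance \eqref{loconj}) to extract the grading weight dependence, so that the $\epsilon^l$ factor is paired against a matrix element whose growth in $l$ is controlled by the weight filtration. This is exactly the setting in which the sewing parameter $\epsilon=\zeta_1\zeta_2$ with $|\zeta_a|>0$ enters: for $|\epsilon|$ small enough the geometric annuli overlap and the series converges, and the radius of convergence is governed by $\min_{i\ne j}$-type separations of the marked points, analogous to the disk estimate $|z|<\min_{i\ne j}\{|z_i-z_j|\}$ appearing after \eqref{power-series}.

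The hard part will be proving the absolute convergence of the $\epsilon$-series uniformly enough that the limit is itself rational with \emph{only} the claimed poles, rather than merely holomorphic on an annulus. Concretely, I expect the obstacle to be showing that the sum over the intermediate states $u$ does not introduce spurious singularities and that the rationality survives the infinite summation: each term is rational, but an infinite sum of rational functions need be neither rational nor even meromorphic in general. To handle this I would argue that the genus-zero sewing construction of \cite{Y}, together with the $L_W(-1)$-derivative property \eqref{lder1}--\eqref{lder2} and the composability assumption recorded in the Remark following Definition \ref{duplodef}, identifies the summed series with a single matrix element on the sewn sphere; that matrix element is of the form \eqref{deff} and hence rational by the very definition of a $\overline{W}$-valued rational function, with poles precisely on the diagonals of the combined configuration space $F_{k+n}\C$.

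Finally I would assemble the pieces: having shown term-by-term rationality, absolute convergence of the $\epsilon$-sum in a punctured neighborhood of $\epsilon=0$, and identification of the limit with a genuine element of $\W_{x_1,\ldots,x_k;y_1,\ldots,y_n}$, I conclude via \eqref{deff} that the product \eqref{Z2n_pt_eps1q1} defines a $\W$-valued rational form whose poles occur only at $x_i=x_j$, $y_{i'}=y_{j'}$, and $x_i=y_{j'}$, as claimed. The independence of the whole construction from the choice of $V_l$-basis $\{u\}$ is already guaranteed by the standard argument cited after Definition \ref{duplodef}, so it does not re-enter the convergence proof.
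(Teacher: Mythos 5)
Your overall route coincides with the paper's: interpret \eqref{Z2n_pt_eps1q1} through the sewing of two spheres, rewrite each factor via \eqref{wprop} as a matrix element of an ordinary vertex operator $Y_W(u,-\zeta_a)$ acting on the $\W$-valued form, invoke Proposition \ref{n-comm} for the convergence and pole structure of each factor, and then control the sum over $l\in\Z$ and the (finite) $V_l$-basis. The genuine gap is in the step you yourself flag as ``the main work'': you never produce an actual bound on the $l$-th coefficient. Saying that the growth in $l$ ``is controlled by the weight filtration'' via the $L_W(0)$-conjugation property \eqref{loconj} is not an estimate; \eqref{loconj} is an identity relating matrix elements at rescaled parameters, and by itself it gives no uniform control of $\sum_{u\in V_l}\langle w',\cdots u\rangle\langle w',\cdots\overline{u}\rangle$ as $l\to\infty$. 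The paper closes exactly this gap with a concrete analytic device: since the matrix elements \eqref{format1} and \eqref{format2} are absolutely convergent for $0<|\zeta_a|\le R_a$, Cauchy's inequality applied to their coefficient functions yields the geometric bounds \eqref{Cauchy1}--\eqref{Cauchy2}, with constants $M_a$ given by suprema over compact polydisks, whence the coefficient bound \eqref{Cauchy} and absolute convergence of the $\epsilon$-series in the range $0<|\zeta_a|\le r_a$, $|\epsilon|\le r<r_1r_2$. Some such uniform coefficient estimate (Cauchy's inequality, or the equivalent contour/scaling argument that your $L_W(0)$-idea would have to become once made quantitative) is indispensable; without it the convergence claim is asserted, not proven.

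A second, lesser defect is that your resolution of the ``infinite sum of rational functions'' problem is circular. You conclude rationality by asserting that the summed series ``is of the form \eqref{deff} and hence rational by the very definition.'' But being of the form \eqref{deff} means precisely that there exists an element of $\overline{W}_{x_1,\ldots,x_k;y_1,\ldots,y_n}$ whose pairing with every $w'\in W'$ is a rational function with the prescribed poles; the existence of such an element is what must be established, not assumed. The paper separates these issues: Proposition \ref{derga} proves only absolute convergence, with the extra poles at $x_i=y_j$ read off from the sewing identification, while the existence of the corresponding element of $\W_{x_1,\ldots,x_k;y_1,\ldots,y_n}$ is deferred to the Lemma immediately following, which invokes completeness of the algebraic completion and density of the space of rational forms. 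You may keep your architecture, but the existence statement needs its own argument (or citation) rather than being folded into the definition of \eqref{deff}.
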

\begin{proof}
In order to prove this proposition we use the geometric interpretation of the product 
\eqref{Z2n_pt_eps1q1} in terms of the Riemann spheres with marked points  
(see Appendix \ref{sphere}).  
We consider two sets of vertex algebra elements 
$(v_1, \ldots, v_{k})$ and $(v'_1, \ldots, v'_k)$,  
and two sets of  
formal complex parameters 
$(x_1, \ldots, x_k)$, $(y_{1}, \ldots, y_n)$. 
Formal parameters are identified with local coordinates of 
$k$ points on the Riemann sphere $\widehat{\Sigma}^{(0)}_1$, and 
$n$ points on $\widehat{\Sigma}^{(0)}_2$, 
 with excised annuluses ${\mathcal A}_a$ 
(see definitions and notations in Appendix \ref{sphere}).   
 The existence of an appropriate $\W_{x_1, \ldots, x_k; y_1, \ldots, y_n}$-valued  
rational form is clear from Definition \ref{duplodef} 
corresponding to the absolute convergent rational form  
$\mathcal R(x_1, \ldots, x_k; y_1, \ldots, y_n; \epsilon)$ 
defining the $\epsilon$-product of elements of  
the spaces $\W_{x_1, \ldots, x_k}$ and $\W_{y_1, \ldots, y_n}$.    
For all choices of elements of the spaces 
$\W_{x_1, \ldots, x_k}$ and $\W_{y_1, \ldots, y_n}$   
there exists an element  
$\F(v_1, x_1; \ldots; v_k, x_k; 
 v'_1, y_1; \ldots; v'_n, y_n $; $ \epsilon)  
\in \W_{x_1, \ldots, x_k; y_1, \ldots, y_n}$  
such that the product \eqref{Z2n_pt_eps1q1} is 
$R(x_1, \ldots, x_k; y_1, \ldots, y_n; \epsilon)$ $=$ $\langle w'$,   
$\F$ $(v_1, x_1$; $\ldots$; $v_k$, $x_k$; $v'_1$, $y_1$; 
$\ldots$; $v'_n$, $y_n$; $\epsilon)\rangle$.   
\end{proof}  
As we see, the $\epsilon$-product is parametrized by a non-zero complex parameter $\epsilon$, 
and a collection of points on auxiliary  
spheres with 
formal parameters 
vanishing at these points. 
We then have 
\begin{definition}
Let $W$ be a qusi-conformal module for a grading restricted vertex algebra $V$. 
For fixed sets $(v_1, \ldots, v_k)$, $(v'_1, \ldots, v'_n) \in V$,  
$(x_1, \ldots, x_k)\in \C$, $(y_1, \ldots, y_n$) $\in \C$,   
 we call the set of all $\W_{x_1, \ldots, x_k; y_1, \ldots, y_n}$-valued rational forms  
$\F(v_1, x_1; \ldots;  v_k, x_k $ ; $ v'_1, y_1; \ldots;  v'_n,  y_n; \epsilon)$ 
defined by \eqref{Z2n_pt_eps1q1}  
with the parameter $\epsilon$ exhausting all possible values,    
 the complete product of the spaces $\W_{x_1, \ldots, x_k}$ and  $\W_{y_1, \ldots, y_n}$.  
\end{definition}
\section{Properties of the $\W$-product}
\label{properties}
In this Section we study properties of 
the product 
$\F(v_1, x_1$; $ \ldots $; $  v_k, x_k $; $v'_1, y_1; \ldots $; $  v'_n,  y_n$; $\epsilon )$ 
of \eqref{Z2n_pt_eps1q1}.  
Since we assume that $(x_1, \ldots, x_k; y_1, \ldots, y_n)\in F_{k+n}\C$, i.e., 
coincidences of $x_i$ and $y_j$ are excluded by the definition of $F_{k+n}\C$.  We have 
\begin{definition}
We define the action of $\partial_l=\partial_{z_l}={\partial}/{\partial_{z_l}}$, 
$1\le l \le k+n$,   
the differentiation of
$\F(v_1,  x_1;  \ldots;  v_k, x_k$; $v'_1, y_1; \ldots;  v'_n,  y_n; \epsilon)$ 
with respect to the $l$-th entry of 
 $(x_1,  \ldots,   x_k;   y_1, \ldots,   y_n)$  
 as follows 
\begin{eqnarray}
\label{Z2n_pt_eps1qdef}
 & &
\langle w', \partial_l  
\F( v_1,  x_1;  \ldots;  v_k, x_k;   v'_1, y_1; \ldots;  v'_n,  y_n; \epsilon) \rangle  
\nn
 & &  = \sum_{m\in \mathbb{Z} } \epsilon^m \sum_{u\in V_m}     
\langle w',   \partial^{\delta_{l,i}}_{x_i}  
Y^W_{WV}\left( 
 \F ( v_1,  x_1;  \ldots; v_k, x_k), \zeta_1 \right) \; u \rangle  
\nn
& &
\qquad   \qquad  
\langle w',   \partial^{\delta_{l,j}}_{y_j} 
Y^W_{WV}\left(  
\F
(v'_1,  y_1; \ldots;  v'_n,  y_n),  \zeta_2 \right) \overline{u} \rangle.      
\end{eqnarray}
\end{definition}
\begin{proposition}
The product \eqref{Z2n_pt_eps1q1} satisfies the 
 $L_V(-1)$-derivative \eqref{lder1} and $L_V(0)$-conjugation \eqref{loconj} properties. 
\end{proposition}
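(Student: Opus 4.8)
The plan is to verify each property by differentiating or applying the relevant operator to the defining series \eqref{Z2n_pt_eps1q1} term by term, and then transferring the known $L_V(-1)$-derivative and $L_V(0)$-conjugation properties of the factor forms $\F(v_1,x_1;\ldots;v_k,x_k)$ and $\F(v'_1,y_1;\ldots;v'_n,y_n)$ through the intertwining operator $Y^W_{WV}$. Because the product is defined slot-by-slot as a sum of products of two matrix elements, and each matrix element already carries the structure of a form in its own set of formal parameters, the differential identities should propagate factor-wise; the absolute convergence established in Proposition \ref{derga} is what licenses this term-by-term manipulation.

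For the $L_V(-1)$-derivative property \eqref{lder1}, I would fix an index $l$ and consider $\partial_l$ acting on the product. By the definition \eqref{Z2n_pt_eps1qdef}, the derivative with respect to the $l$-th entry acts only on the factor in which that entry appears: if $1\le l\le k$ it hits the first matrix element through $\partial_{x_i}^{\delta_{l,i}}$, and if $k+1\le l\le k+n$ it hits the second through $\partial_{y_j}^{\delta_{l,j}}$. Since each factor $\F$ has the $L_V(-1)$-derivative property by hypothesis (these are genuine elements of the respective $\W$-spaces), one has
\begin{equation*}
\partial_{x_i}\F(v_1,x_1;\ldots;v_k,x_k)=\F(v_1,x_1;\ldots;L_V(-1)v_i,x_i;\ldots;v_k,x_k),
\end{equation*}
and applying $Y^W_{WV}(-,\zeta_1)$ to both sides, then reassembling the series, yields precisely the product with $L_V(-1)v_l$ inserted in the $l$-th slot. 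This is what \eqref{lder1} requires.

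For the $L_V(0)$-conjugation property \eqref{loconj}, the strategy is to apply $z^{L_W(0)}$ to the product and commute it through the construction. The main idea is to use the $L_V(0)$-conjugation property of each factor form together with the compatibility of $L_W(0)$ with the intertwining operator $Y^W_{WV}$ and with the grading on $V$: the internal sum runs over $u\in V_l$, and rescaling sends $u\mapsto z^{L_V(0)}u$ so that the weight grading interacts correctly with the powers of $\epsilon=\zeta_1\zeta_2$ and with the rescaling $x_i\mapsto zx_i$, $y_j\mapsto zy_j$, $\zeta_a\mapsto z\zeta_a$. One would check that the bookkeeping of weights across the two matrix elements and the summation over a homogeneous basis of $V_l$ conspires to reproduce the product of the two rescaled forms on the right-hand side of \eqref{loconj}.

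The hard part will be the $L_W(0)$-conjugation computation, specifically controlling how the rescaling parameter $z$ distributes across the two tensor factors and the sewing parameters $\zeta_1,\zeta_2$ while preserving the constraint $\epsilon=\zeta_1\zeta_2$. One must verify that the weight of $u$ and that of its dual $\overline{u}$ combine so that the total $z$-power matches the sum of weights $\sum\wt(v_i)+\sum\wt(v'_j)$ carried by the resulting differential form, rather than producing a spurious dependence on the internal summation index $l$. I expect this weight-matching across the canonical pairing $\langle .,.\rangle_\lambda$ between $u\in V_l$ and $\overline{u}\in V'$ to be the delicate point; once it is confirmed that the contributions from the two factors are homogeneous of the correct total degree, the identity \eqref{loconj} follows from the corresponding property of each factor and the $L_W(0)$-equivariance of $Y^W_{WV}$.
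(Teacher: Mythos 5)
Your proposal is correct and follows essentially the same route as the paper: term-by-term manipulation of the defining series (licensed by the absolute convergence of Proposition \ref{derga}), factor-wise transfer of the $L_V(-1)$-derivative property through $Y^W_{WV}$ via \eqref{Z2n_pt_eps1qdef}, and for the conjugation property, commuting $z^{L_W(0)}$ through $Y_W$ using \eqref{aprop} together with the weight-matching of $u$ and $\overline{u}$ under the invariant pairing (\eqref{dubay}, \eqref{condip}). The delicate point you flag is exactly what the paper resolves: the factors $z^{-\wt u}$, $z^{-\wt \overline{u}}$ produced by the conjugation are compensated by the rescaling $\zeta_a \mapsto z\zeta_a$ consistently with the sewing relation \eqref{pinch}, at the price (noted in the paper's subsequent remark) that the $L_V(0)$-conjugation property of the product holds with the parameters $\zeta_a$ also rescaled.
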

\begin{proof} 
 By using \eqref{lder1} for $\F(v_1,  x_1;  \ldots; v_k, x_k)$
and $\F(v'_1,  y_1; \ldots;  v'_n,  y_n)$,  
 we consider 
\begin{eqnarray}
\label{Z2n_pt_eps1q00000}
 & &
\langle w',\partial_l 
 \F( v_1,  x_1;  \ldots;  v_k, x_k;   v'_1, y_1; \ldots;  v'_n,  y_n; \epsilon)  \rangle 
\nn
&& 
\nn
 & &  = \sum_{m\in \mathbb{Z}} \epsilon^m \sum_{u\in V_m}    
\langle w', \partial^{\delta_{l,i}}_{x_i} 
Y^W_{WV}\left( 
 \F(v_1,  x_1;  \ldots; v_k, x_k), \zeta_1 \right) \; u \rangle   
\nn
& &
\qquad   \qquad 
\langle w', \partial^{\delta_{l,j}}_{y_j} 
Y^W_{WV}\left(\F(v'_1,  y_1; \ldots;  v'_n,  y_n),  \zeta_2 \right) \overline{u} \rangle     
\nn
 & &  = \sum_{m \in \mathbb{Z} } \epsilon^m \sum_{u\in V_m}    
\langle w', \partial^{\delta_{l,i}}_{x_i} 
Y_W\left( u, - \zeta_1 \right) 
  \F ( v_1,  x_1;  \ldots; v_k, x_k)) \; u \rangle    
\nn
& &
\qquad   \qquad 
\langle w', \partial^{\delta_{l,j}}_{y_j}  
Y_W\left(\overline{u}, - \zeta_2  \right) 
\F(v'_1,  y_1; \ldots;  v'_n,  y_n)  \rangle     
\nn
 & &  = \sum_{m\in \mathbb{Z}}  
\epsilon^m \sum_{u\in V_m}   
\langle w', 
Y^W_{WV}\left( 
\partial^{\delta_{l,i}}_{x_i}  
\F ( v_1,  x_1;  \ldots; v_k, x_k), \zeta_1 \right) \; u \rangle  
\nn
& &
\qquad   \qquad 
\langle w', 
Y^W_{WV}\left( \partial^{\delta_{l,j}}_{y_j} 
\F (v'_1,  y_1; \ldots;  v_n,  y_n),  \zeta_2 \right) \overline{u} \rangle    
\nn
& &  
=  \sum_{m\in \mathbb{Z}} \epsilon^m \sum_{u\in V_m}    
\langle w',
 Y^W_{WV}\left(  
\F (v_1, x_1; \ldots; \left(L_V{(-1)}\right)^{\delta_{l,i}} v_i, x_i; 
\ldots; v_k, x_k), \zeta_1 \right) \; u \rangle  
\nn
& &
\qquad   \qquad 
\langle w', Y^W_{WV}\left( 
\F
(v'_1, y_1; \ldots; \left(L_V{(-1)}\right)^{\delta_{l,j}} v'_j, y_j;   
\ldots; v'_n, y_n),  \zeta_2 \right) \overline{u} \rangle    
\nn
& &  
=    
\langle w',
\F(v_1,  x_1;  \ldots;  \left(L_V{(-1)}\right)_l;  \ldots;  v'_n,  y_n; 
\epsilon) \rangle,   
\end{eqnarray}
where $\left(L_V{(-1)}\right)_l$ acts on the $l$-th entry of  
$(v_1,  \ldots;  v_k;   v'_1,  \ldots,   v'_n)$. 
Summing over $l$ we obtain
\begin{eqnarray}
\label{lder2f0}
&&
\sum\limits_{l=1}^{k+n} \partial_l 
\F (v_1, x_1; \ldots;  
v_k, x_k;
v'_1, y_1; \ldots;  
 v'_n, y_n; \epsilon) \rangle  
\nn
&&= 
\sum\limits_{l=1}^{k+n} \langle w', 
\F (v'_1, x_1; \ldots; \left(L_V{(-1)}\right);    \ldots;  v'_n, y_n; \epsilon) \rangle  
\nn
&&
=\langle w',
 L_W{(-1)}.\F(v_1, x_1;  \ldots ; v_k, x_k;   v'_1,  y_1; \ldots; v'_n,  y_n; 
\epsilon) \rangle.    
\end{eqnarray}
Due to \eqref{loconj}, \eqref{locomm}, \eqref{dubay}, \eqref{condip}, and \eqref{aprop},  
we have 
\begin{eqnarray*}
&& \langle w', 
 \F ( z^{L_V{(0)} } v_1, z \; x_1;  \ldots;  z^{L_V{(0)}} v_k,  z\; x_k; 
z^{L_V{(0)}} v'_1, z \; y_1;  \ldots;  z^{L_V{(0)}} v'_n,  z\; y_n;  
 \epsilon) \rangle  
\end{eqnarray*}
\begin{eqnarray*} 
 & &  = \sum_{m \in \mathbb{Z}} \epsilon^m \sum_{u\in V_m}    
\langle w', Y^W_{WV}\left(  
 \F (z^{L_V{(0)}} v_1, z\; x_1;  \ldots; z^{L_V{(0)}} v_k, z\; x_k), \zeta_1 \right) \;  
u \rangle   
\nn
& &
\qquad   \qquad 
\langle w', Y^W_{WV}\left( 
\F (z^{L_V{(0)}} v'_1, z \; y_1; \ldots; z^{L_V{(0)}} v'_n, z\; y_n),  \zeta_2 \right)  
\end{eqnarray*}
\begin{eqnarray*}
& &  
=  \sum_{m \in \mathbb{Z} } \epsilon^m \sum_{u\in V_m}    
\langle w', Y^W_{WV}\left( z^{L_V{(0)}}  
\F (v_1, x_1;  \ldots; v_k, x_k), \zeta_1 \right) \; u \rangle  
\nn
& &
\qquad   \qquad 
\langle w', Y^W_{WV}\left( z^{L_V{(0)}} 
\F (v'_1, y_1; \ldots; v'_n, y_n),  \zeta_2 \right) \overline{u} \rangle    
\end{eqnarray*}
\begin{eqnarray*}
& &  
=  \sum_{m \in \mathbb{Z}} \epsilon^m \sum_{u\in V_m}    
\langle w', e^{\zeta_1 L_W{(-1)}}  Y_W \left(  u, -\zeta_1 \right)  
z^{L_V{(0)}}  
\F (v_1, x_1;  \ldots; v_k, x_k)  \rangle 
\nn
& &
\qquad   \qquad 
\langle w', e^{\zeta_2 L_W{(-1)}} \; Y_W\left( \overline{u}, -\zeta_2  \right) z^{L_V{(0)}}  
\F (v'_1, y_1; \ldots; v'_n, y_n)   \rangle     
\end{eqnarray*}
\begin{eqnarray*}
& &  
=  \sum_{ m\in \mathbb{Z} } \epsilon^m \sum_{u\in V_m}    
\langle w', e^{\zeta_1 L_W{(-1)}}  z^{L_V{(0)}} 
Y_W \left(  z^{-L_V{(0)}} u, -z \; \zeta_1 \right)  
\F (v_1, x_1;  \ldots; v_k, x_k)  \rangle 
\nn
& &
\qquad   \qquad 
\langle w', e^{\zeta_2 L_W{(-1)}}\; z^{L_W{(0)}}\;   
Y_W \left( z^{-L_V{(0)}} \overline{u}, -z \; \zeta_2  \right)   
\F (v'_1, y_1; \ldots; v'_n, y_n)   \rangle    
\nn
& &  
=  \sum_{m\in \mathbb{Z} } \epsilon^m \sum_{u\in V_m}    
\langle w', e^{\zeta_1 L_W{(-1)}}  z^{L_W{(0)}} z^{-{\rm wt} u} \; Y_W \left(u, -z \;  
\zeta_1 \right) 
\F (v_1, x_1;  \ldots; v_k, x_k)  \rangle   
\nn
& &
\qquad   \qquad 
\langle w', e^{\zeta_2 L_W{(-1)}}\; z^{L_W(0)}\;  z^{-{\rm wt}\overline{u} } 
\; Y_W\left(  \overline{u}, -z \; \zeta_2  \right)   
\F (v'_1, y_1; \ldots; v'_n, y_n)   \rangle   
\end{eqnarray*}
\begin{eqnarray*}
& &  
=  \sum_{m \in \mathbb{Z} } \epsilon^m \sum_{u\in V_m}    
\langle w', z^{L_W(0)} e^{\zeta_1 L_W{(-1)}} Y_W \left(  u, -z \zeta_1 \right)  
\F (v_1, x_1;  \ldots; v_k, x_k)  \rangle   
\nn
& &
\qquad   \qquad  
\langle w', z^{L_W(0)} e^{\zeta_2 L_W{(-1)}}  Y_W\left( \overline{u}, -z \zeta_2  \right)  
\F (v'_1, y_1; \ldots; v'_n, y_n),   \rangle    
\end{eqnarray*}
\begin{eqnarray*}
& &  
=  \sum_{m\in \mathbb{Z} } \epsilon^m \sum_{u\in V_m}    
\langle w', z^{L_W(0)} \; Y^W_{WV}\left( 
\F (v_1, x_1;  \ldots; v_k, x_k), z \zeta_1 \right) \; u \rangle  
\notag 
\nn
& &
\qquad   \qquad 
\langle w', z^{L_W(0)} \;  Y^W_{WV}\left( 
\F (v'_1, y_1; \ldots; v'_n, y_n ),  z \zeta_2 \right) \overline{u} \rangle   
\end{eqnarray*}
\begin{eqnarray*}
& &  
=  \sum_{m \in \mathbb{Z} } \epsilon^m  \sum_{u\in V_m}    
\langle w', z^{L_W(0)} \; Y^W_{WV}\left(  
\F (v_1, x_1;  \ldots; v_k, x_k),  \zeta'_1 \right) \; u \rangle   
\nn
& &
\qquad   \qquad 
\langle w', z^{L_W(0)} \;  Y^W_{WV}\left( 
\F (v'_1, y_1; \ldots; v'_n, y_n),  \zeta'_2 \right) \overline{u} \rangle    
\end{eqnarray*}
\begin{eqnarray*}
&&
=\langle w', \left( z^{L_W(0)} \right).  
\F (v_1, x_1;  \ldots;  v_k, x_k; v'_1, y_1; \ldots; v'_n, y_n; \epsilon) \rangle.    
\end{eqnarray*}
With \eqref{pinch}, 
we obtain \eqref{loconj} for \eqref{Z2n_pt_eps1q1}.   
\end{proof}
\begin{remark}
As we see in the last expressions, 
  the $L_V(0)$-conjugation property \eqref{loconj} for the product \eqref{Z2n_pt_eps1q1} 
includes the action 
of $z^{L_V(0)}$-operator on complex parameters $\zeta_a$, $a=1$, $2$.   
\end{remark}
We also have 
\begin{proposition}
\label{pupa}
For primary elements $v_i$, $v'_j \in V$, $1 \le i \le k$, 
$1 \le j \le n$,  
of a quasi-conformal grading-restricted vertex algebra $V$ and 
its grading-restricted generalized module $W$,   
the product \eqref{Z2n_pt_eps1q1} is canonical with respect to the action of the group  
${\rm Aut}_{x_1, \ldots, x_{k}; y_1, \ldots, y_{n}}\Oo^{(k+n)}$   
of 
$k+n$-dimensional 
changes 
\begin{eqnarray}
\label{zwrho}
&&(x_1, \ldots, x_k; y_1, \ldots, y_n) \mapsto (x'_1, \ldots, x'_k; y'_1, \ldots, y'_n) 
\nn
&&
\qquad =(\rho_1(x_1, \ldots, x_k; y_1, \ldots, y_n), \ldots, 
\rho_{k+n}(x_1, \ldots, x_k; y_1, \ldots, y_n)), 
\end{eqnarray} 
of formal parameters. 
\end{proposition}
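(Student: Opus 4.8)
The plan is to reduce the invariance of the product to the invariance of each of its two factor forms, which is precisely the content of Proposition \ref{ndimwinv}. The starting observation is that in the defining expression \eqref{Z2n_pt_eps1q1} the formal parameters $(x_1, \ldots, x_k)$ enter only through the first factor $\F(v_1, x_1; \ldots; v_k, x_k)$, while $(y_1, \ldots, y_n)$ enter only through the second factor $\F(v'_1, y_1; \ldots; v'_n, y_n)$; the remaining data of the product --- the sewing parameters $\epsilon$, $\zeta_1$, $\zeta_2$ subject to $\zeta_1 \zeta_2 = \epsilon$ \eqref{pinch}, the summation index $l$, and the dual basis pair $u$, $\overline{u}$ --- are intrinsic to the sewing procedure of Appendix \ref{sphere} and carry no dependence on the formal parameters. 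Consequently a coordinate change \eqref{zwrho} acts on \eqref{Z2n_pt_eps1q1} solely through the two factor forms, leaving the intertwining operators $Y^{W}_{WV}(\cdot, \zeta_a)$ and the $\epsilon$-weighted sum formally unchanged.

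First I would substitute the transformed parameters $(x'_1, \ldots, x'_k; y'_1, \ldots, y'_n)$ into \eqref{Z2n_pt_eps1q1} and apply Proposition \ref{ndimwinv} to each factor separately: since $v_1, \ldots, v_k$ and $v'_1, \ldots, v'_n$ are primary, the forms $\F(v_1, x_1; \ldots; v_k, x_k)$ and $\F(v'_1, y_1; \ldots; v'_n, y_n)$ of type \eqref{bomba} are invariant under the corresponding automorphisms of $\Oo^{(k)}$ and $\Oo^{(n)}$, the differential factors $dx_i^{{\rm wt}(v_i)}$ and $dy_j^{{\rm wt}(v'_j)}$ absorbing the Jacobians of the reparametrization. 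Substituting the invariant factor forms back and recollecting the $\epsilon$-summation term by term returns exactly \eqref{Z2n_pt_eps1q1} in the original parameters, which is the asserted invariance.

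The main obstacle is that a general element of ${\rm Aut}_{x_1, \ldots, x_k; y_1, \ldots, y_n}\Oo^{(k+n)}$ need not respect the splitting of the $k+n$ variables into the $x$- and $y$-blocks, whereas the factorized form of \eqref{Z2n_pt_eps1q1} manifestly does; the factor-by-factor computation above therefore only settles those automorphisms that preserve the block structure. The clean way to handle the general case is to invoke the preceding Lemma: it shows that the product \eqref{Z2n_pt_eps1q1} is itself a genuine element $\F(v_1, x_1; \ldots; v_k, x_k; v'_1, y_1; \ldots; v'_n, y_n; \epsilon) \in \W_{x_1, \ldots, x_k; y_1, \ldots, y_n}$, hence a form of type \eqref{bomba} whose explicit vector entries $v_1, \ldots, v_k, v'_1, \ldots, v'_n$ are all primary (the indices $u$, $\overline{u}$ being internal summation variables of the sewing, not entries tensored with differentials). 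For such a form Proposition \ref{ndimwinv} applies directly in $k+n$ variables and yields invariance under the full group ${\rm Aut}_{x_1, \ldots, x_k; y_1, \ldots, y_n}\Oo^{(k+n)}$, including the changes that mix the two blocks. This is precisely the canonicity claimed, and the factorized computation serves as an independent consistency check in the block-preserving case.
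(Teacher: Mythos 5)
Your first two paragraphs reproduce, in essence, the paper's own proof: the paper simply notes that by Proposition \ref{ndimwinv} each factor satisfies $\F(v_1, x'_1; \ldots; v_k, x'_k) = \F(v_1, x_1; \ldots; v_k, x_k)$ and $\F(v'_1, y'_1; \ldots; v'_n, y'_n) = \F(v'_1, y_1; \ldots; v'_n, y_n)$, substitutes these identities into \eqref{Z2n_pt_eps1q1}, and concludes invariance under \eqref{zwrho}. Where you go beyond the paper is in your final paragraph: you observe, correctly, that a general element of ${\rm Aut}_{x_1, \ldots, x_k; y_1, \ldots, y_n}\; \Oo^{(k+n)}$ mixes the two blocks of variables, so that its restriction to the $x$-block is not by itself an automorphism of $\Oo^{(k)}$, and the factor-wise application of Proposition \ref{ndimwinv} is literally justified only for block-preserving changes. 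The paper's proof is silent on this point --- it writes $x'_i = \rho_i(x_1, \ldots, x_k; y_1, \ldots, y_n)$ into the $k$-variable factor without comment --- so it implicitly carries the very gap you identified. Your repair, which uses the existence Lemma following Proposition \ref{derga} to view the product as a single element of $\W_{x_1, \ldots, x_k; y_1, \ldots, y_n}$ whose entries $v_1, \ldots, v_k, v'_1, \ldots, v'_n$ are all primary, and then applies Proposition \ref{ndimwinv} directly in $k+n$ variables, is the cleaner and more complete route: it yields invariance under the full group rather than only its block-preserving subgroup, at the cost of relying on the product inheriting the structural form \eqref{bomba} together with the $L_V(-1)$-derivative property used in the Appendix \ref{proof} argument --- both of which are established in the paper before Proposition \ref{pupa}, so no circularity arises. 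In short, your proposal is correct, contains the paper's argument as its special (block-preserving) case, and patches a real weakness in the paper's own proof.
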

\begin{proof}
Note that due to Proposition \ref{ndimwinv} 
\begin{eqnarray*}
\F (v_1, x'_1;  \ldots; v_k, x'_k) &=&
\F (v_1, x_1;  \ldots; v_k, x_k), 
\nn
\F (v_1, y'_1;  \ldots; v_n, y'_n) &=& 
\F (v_1, y_1;  \ldots; v_n, y_n).   
\end{eqnarray*}
Thus,  
\begin{eqnarray*}
&& \langle w', \F(v_1, x'_1; \ldots,;  
v_k, x'_k; v'_1, y'_1; \ldots; v'_n, y'_n; \epsilon) \rangle 
\nn
 & &  =  \sum_{l\in \mathbb{Z}} \epsilon^l \sum_{u\in V_l}    
\langle w', Y^W_{WV}\left(  
\F (v_1, x'_1;  \ldots; v_k, x'_k), \zeta_1 \right) \; u \rangle   
\nn
& &
\qquad \qquad  
\langle w', Y^W_{WV}\left( 
\F (v'_1, y'_1; \ldots; v'_n, y'_n),  \zeta_2 \right) \overline{u} \rangle   
\nn
& &  
=  \sum_{l\in \mathbb{Z} } \epsilon^l \sum_{u\in V_l}   
 \langle w', Y^W_{WV}\left(  
\F (v_1, x_1;  \ldots; v_k, x_k), \zeta_1 \right) \; u \rangle 
\nn
& &
\qquad   \qquad  
\langle w', Y^W_{WV}\left( 
\F (v'_1, y_1; \ldots; v'_n, y_n),  \zeta_2 \right) \overline{u} \rangle    
\nn
&&
= \langle w', \F(v_1, x_1; \ldots,;  v_k, x_k; v'_1, y_1; \ldots; v'_n, y_n; \epsilon) \rangle. 
\end{eqnarray*}
Thus, the product \eqref{Z2n_pt_eps1q1} is invariant under \eqref{zwrho}. 
\end{proof}
In the geometric interpretation in terms of auxiliary spaces,  
the definition \eqref{Z2n_pt_eps1q1} depends on the choice of 
insertion points $p_i$, $1 \le i \le k$,   
with local coordinates $x_i$ 
on $\widehat{\Sigma}^{(0)}_1$,   
and $p'_i$, $1 \le j \le k$, with local coordinates $y_j$ on  
 $\widehat{\Sigma}_2^{(0)}$.
 Suppose we change the the distribution of points among two Riemann spheres.  
We formulate the following   
\begin{lemma}
\label{functionformpropcor} 
For a fixed set $(\widetilde{v}_1 , \ldots, \widetilde{v}_n) \in V$,  
of vertex algebra elements,  
 the $\epsilon$-product
$\F(\widetilde{v}_1, z_1;  \ldots;  \widetilde{v}_n, z_n; \epsilon) 
\in \W_{z_1, \ldots,  z_n}$,  
  \begin{equation}
\label{invar}
\cdot_\epsilon : \W_{z_1, \ldots, z_k} \times  \W_{z_{k+1}, \ldots, z_n} \rightarrow   
 \W_{z_1, \ldots, z_n}, 
\end{equation}
 remains the same  
 for elements $\F(\widetilde{v}_1, z_1;  \ldots;  \widetilde{v}_k, z_k) \in  
  \W_{z_1, \ldots, z_k}$ and 
$\F(\widetilde{v}_{k+1}, z_{k+1} $; $  \ldots $ ; 
$ \widetilde{v}_n, z_n) \in \W_{z_{k+1},  \ldots, z_n}$,    
for any $0 \le k \le n$.  
\end{lemma}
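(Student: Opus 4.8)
The plan is to reduce the statement to the claim that the rational form
\[
\mathcal R(z_1, \ldots, z_n; \epsilon) = \langle w', \F(\widetilde v_1, z_1; \ldots; \widetilde v_n, z_n; \epsilon)\rangle
\]
produced by \eqref{Z2n_pt_eps1q1} is independent of the splitting index $k$. By the Lemma preceding this one, an element of $\W_{z_1, \ldots, z_n}$ corresponding to the absolutely convergent series \eqref{Z2n_pt_eps1q1} exists; and by Definition \ref{valuedrational} an element of $\W_{z_1, \ldots, z_n}$ is determined by its pairings with all $w' \in W'$, since $\overline W = (W')^{*}$. Hence it suffices to show that the rational function obtained from \eqref{Z2n_pt_eps1q1} does not change when the number $k$ of $V$-insertions carried by the first sphere is varied while the total configuration $(\widetilde v_1, z_1; \ldots; \widetilde v_n, z_n)$ is held fixed.

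First I would invoke the geometric interpretation of Appendix \ref{sphere}. By construction the two factors in \eqref{Z2n_pt_eps1q1} are the matrix elements attached to the spheres $\widehat\Sigma_1^{(0)}$ and $\widehat\Sigma_2^{(0)}$, while the intermediate sum $\sum_{l \in \Z}\epsilon^l \sum_{u \in V_l}$ implements the sewing of their excised annuli under the pinching relation $\zeta_1 \zeta_2 = \epsilon$ of \eqref{pinch}; thus $\mathcal R$ is the correlation form on the single sphere $\widehat\Sigma^{(0)}$ obtained by this sewing. Redistributing one marked point from $\widehat\Sigma_1^{(0)}$ to $\widehat\Sigma_2^{(0)}$ alters neither the sewn surface $\widehat\Sigma^{(0)}$ nor the positions and local coordinates of its $n$ marked points, so, the correlation form being intrinsic to the sewn surface, $\mathcal R$ is unchanged. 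This is the conceptual content of the lemma.

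To turn this into a verification I would argue by induction on the position of the cut, so that it is enough to show that moving the single element $\widetilde v_k$ from the first factor to the second leaves \eqref{Z2n_pt_eps1q1} invariant. Rewriting both factors through the intertwining-operator identity \eqref{wprop}, so that each factor becomes a matrix element of ordinary $W$-vertex operators $Y_W(u, -\zeta_1)$ and $Y_W(\overline u, -\zeta_2)$ acting on the $\W$-forms, this move amounts to transporting the operator carrying $\widetilde v_k$ across the graded resolution of the identity $\sum_{l \in \Z}\epsilon^l \sum_{u \in V_l}|u\rangle\langle \overline u|$ inserted at the sewing locus. That this transport preserves the summed value is the genus-zero factorization identity: inserting a complete graded set of intermediate states on either side of the $\widetilde v_k$-operator yields the same result, the reordering of insertions inside each factor being governed by the commutativity of Proposition \ref{n-comm}.

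The main obstacle is analytic rather than formal: the series before and after the move are defined and absolutely convergent only in the respective regions $0 < |\zeta_a| \le R_a$, $|\epsilon| \le r$ supplied by Proposition \ref{derga}, and the factorization rearrangement regroups a doubly infinite sum over $l$ and over the chosen $V_l$-basis. I would therefore perform the manipulation on a common subdomain on which both the $k$- and the $(k-1)$-channel expansions converge absolutely, using the pinching constraint $\zeta_1\zeta_2 = \epsilon$ and the Cauchy bounds \eqref{Cauchy1}, \eqref{Cauchy2} to guarantee the uniform convergence that justifies the regrouping, and then appeal to the identity theorem for rational functions: two rational forms agreeing on a nonempty open set coincide. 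This upgrades the agreement of the convergent series on the overlap region to the equality of the resulting elements of $\W_{z_1, \ldots, z_n}$ for all admissible $\epsilon$. The boundary cases $k = 0$ and $k = n$, in which one sphere carries no $V$-insertion, follow from the same argument with the empty form $\Phi \in \W$ of \eqref{Z2_part}.
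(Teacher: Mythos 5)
Your reduction to the independence of the rational function $\langle w', \F(\ldots;\epsilon)\rangle$ of the splitting index, and your care about where the series converge, are reasonable, but the central step of your argument --- the ``genus-zero factorization identity'' by which the operator carrying $\widetilde v_k$ is transported across the graded sum $\sum_{l}\epsilon^{l}\sum_{u\in V_{l}}$ --- is a genuine gap, for two reasons. First, the insertions $(\widetilde v_i, z_i)$ are \emph{arguments of the map} $\F$, not vertex operators acting inside the matrix element: to ``transport'' $\widetilde v_k$ you would first have to rewrite $\F(\widetilde v_1,z_1;\ldots;\widetilde v_k,z_k)$ as $\omega_W(\widetilde v_k,z_k)$ applied to an element of $\W_{z_1,\ldots,z_{k-1}}$, which is available only for special composable elements (such as the $E$-elements of Section \ref{application}) and is not part of the data for the general elements the lemma is about; the identity \eqref{wprop} only extracts the intermediate states $u$, $\overline u$ as $Y_W(u,-\zeta_1)$, $Y_W(\overline u,-\zeta_2)$, and Proposition \ref{n-comm} only reorders insertions \emph{inside one} matrix element --- it cannot move an insertion between the two factors paired against the same $w'$. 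Second, even when such a transport is possible, it does not preserve the insertion coordinate: passing an operator through the dual-basis sum built on $\langle\,.\,,.\,\rangle_\lambda$ replaces it by its adjoint \eqref{eq: adj op}, i.e.\ conjugation by $T_\lambda$ with $\lambda^2=-\epsilon$ by \eqref{eq:lamb_eps}, so the coordinate undergoes the M\"obius map $z_k\mapsto \epsilon/z_k$ and $\widetilde v_k$ acquires the factors $\exp(-z_k\lambda^{-2}L_V(1))\,(-z_k/\lambda)^{-2L_V(0)}$; likewise, in the paper's own Leibniz-rule computation the operator that migrates from the first factor to the second reappears with the shifted coordinate $x_{k+1}+\zeta_1-\zeta_2$. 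So your claim that the summed value is preserved \emph{with $z_k$ held fixed} is false as stated, not merely unproven.

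The paper's proof of Lemma \ref{functionformpropcor} proceeds differently and precisely supplies what your argument is missing: it writes the product with the cut at $m\le k$, giving the relocated insertions \emph{new} coordinates $z'_{m+1},\ldots,z'_{k}$, recognizes this as an $\epsilon$-product of elements of $\W_{z_1,\ldots,z_m}$ and $\W_{z'_{m+1},\ldots,z'_k;\,z_{k+1},\ldots,z_n}$, and then invokes the invariance under the subgroup of ${\rm Aut}_{z_1,\ldots,z_{k+n}}\,\Oo^{(k+n)}$ fixing $(z_1,\ldots,z_m)$ and $(z_{k+1},\ldots,z_n)$ (Proposition \ref{ndimwinv}, as used in Proposition \ref{pupa}) to identify the result with \eqref{ishodnoe}. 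In other words, the compensating coordinate change absent from your transport step is the entire mechanism of the paper's argument; note also that this mechanism requires the quasi-conformality of $V$ and primary vectors, hypotheses your proposal never uses but could not avoid. To rescue your route you would have to prove the factorization statement with the adjoint operator and transformed coordinate, and then still apply the $\Oo^{(k+n)}$-invariance to return to the original parameters --- at which point you have reproduced the paper's proof.
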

\begin{remark}
This Lemma is important for the formulation of cohomology 
invariants associated to grading-restricted vertex algebras  
on smooth manifolds.   
In case $k=0$, we obtain from \eqref{invar}, 
\begin{equation}
\label{invar}
\cdot_\epsilon: \W \times \W_{z_1, \ldots, z_n} \rightarrow  
 \W_{z_1, \ldots, z_n}.   
\end{equation}
\end{remark}
\begin{proof}
Let $\widetilde{v}_i \in V$, $1 \le i \le k$,   
$\widetilde{v}_j \in V$, $1 \le j \le k$,    
and $z_i$, $z_j$ are corresponding formal parameters.  
We show that  
the $\epsilon$-product of 
$\F(\widetilde{v}_1, z_1;  \ldots;  \widetilde{v}_k, z_k)$ and  
$\F(\widetilde{v}_{k+1}, z_{k+1};  \ldots; \widetilde{v}_n, z_n)$, i.e.,
 the $\W_{z_1, \ldots, z_{k+n}}$-valued  
differential form 
\begin{eqnarray}
& & {\mathcal F} 
 ((\widetilde{v}_1, z_1;  \ldots;  \widetilde{v}_k, z_k);  
(\widetilde{v}_{k+1}, z_{k+1};  \ldots; \widetilde{v}_n, z_n);   
\zeta_1, \zeta_2; \epsilon )     
\end{eqnarray}
is independent of the choice of $0 \le k \le n$.   
 Consider  
\begin{eqnarray}
\label{ishodnoe}
&& 
\langle w',  
\F (\widetilde{v}_1, z_1; \ldots ;  \widetilde{v}_k, z_k; \widetilde{v}_{k+1}, z_{k+1}; 
 \ldots ; \widetilde{v}_n, z_n; \zeta_1, \zeta_2; \epsilon)  \rangle    
\nn
&&\qquad  = 
 \sum_{l\in \mathbb{Z} } \epsilon^l \sum_{u \in V_l}    
\langle w', Y^W_{WV}\left(  
\F (\widetilde{v}_1, z_1;  \ldots; \widetilde{v}_k, z_k), \zeta_1 \right) \; u \rangle  
\nn
& &
\qquad   \qquad  \qquad 
\langle w', Y^W_{WV}\left( 
\F (\widetilde{v}_{k+1}, z_{k+1}; \ldots; 
\widetilde{v}_n, z_n),  \zeta_2 \right) \overline{u} \rangle.      
\end{eqnarray}
On the other hand, for $0 \le m \le k$,  consider
\begin{eqnarray*}
&& 
 \sum_{l\in \mathbb{Z} } \epsilon^l \sum_{u \in V_l}    
\langle w', Y^W_{WV}\left( 
\F (\widetilde{v}_1, z_1;  \ldots; \widetilde{v}_m, z_m), \zeta_1 \right) \; u \rangle 
\nn
& &
\qquad   \qquad  \qquad 
\langle w', Y^W_{WV}\left( 
\F( \widetilde{v}_{m+1}, z'_{m+1}; \ldots; \widetilde{v}_k, z'_k;   
\widetilde{v}_{k+1}, z_1; \ldots; \widetilde{v}_n, z_n),  \zeta_2 \right) \overline{u} \rangle 
\nn
&&
\qquad =\langle w',  
\F (\widetilde{v}_1, z_1; \ldots ;  \widetilde{v}_m, z_m;
\widetilde{v}_{m+1}, z'_{m+1}; \ldots; \widetilde{v}_k, z'_k;
 \widetilde{v}_{k+1}, z_{k+1};
 \ldots ; \widetilde{v}_n, z_n)  \rangle. 
\end{eqnarray*}
The last is the $\epsilon$-product \eqref{Z2n_pt_eps1q1} of 
$\F (\widetilde{v}_1, z_1;  \ldots; \widetilde{v}_m, z_m)  \in 
\W_{z_1,  \ldots, z_m}$ and  $\F 
(\widetilde{v}_{m+1}, z'_{m+1} $; $ \ldots $; $ \widetilde{v}_k, z'_k $; $    
\widetilde{v}_{k+1}, z_1$; $ \ldots $;  $\widetilde{v}_n, z_n) 
\in \W_{z'_{m+1}, \ldots, z'_k;   
 z_1, \ldots, z_n}$.  
Let us apply the invariance with respect to a subgroup of
 ${\rm Aut}_{z_1, \ldots, z_{k+n}}\; \Oo^{(n)}$,   
with $(z_1, \ldots, z_m)$ and $(z_{k+1}, \ldots, z_n)$ remaining unchanged.  
Then we obtain the same product \eqref{ishodnoe}.  
\end{proof}
Next, we formulate 
\begin{definition}
\label{sprod}
We define the
action of an element $\sigma \in S_{k+n}$ on the product of 
$\F (v_1, x_1;  \ldots; v_k, x_k) \in \W_{x_1, \ldots, x_k}$, and  
$\F (v'_1, y_1; \ldots; v'_n, y_n) \in \W_{y_1, \ldots, y_n}$, as 
\begin{eqnarray}
\label{Z2n_pt_epsss}
&& \langle w',  \sigma(\F) (v_1, x_1;  \ldots; v_k, x_k; v'_1, y_1; \ldots; v'_n,  y_n;  
\epsilon) \rangle 
\nn
&&
\qquad =\langle w',  \F (v_{\sigma(1)}, x_{\sigma(1)};  \ldots; v_{\sigma(k)}, x_{\sigma(k)}; 
v'_{\sigma(1)}, y_{\sigma(1)}; \ldots; v'_{\sigma(n)}, y_{\sigma(n)};  
\epsilon) \rangle 
\nn 
& & \qquad =  
\sum_{u\in V }  
 \langle w', Y^W_{WV}\left(  
\F (v_{\sigma(1)}, x_{\sigma(1)};  
\ldots; v_{\sigma(k)}, x_{\sigma(k)}), \zeta_1\right)\; u \rangle   
\nn
& &
\qquad   \qquad  \qquad  
 \langle w', Y^W_{WV}\left(  
\F (v'_{\sigma(1)}, y_{\sigma(1)}; \ldots; v'_{\sigma(n)}, y_{\sigma(n)}) , \zeta_2\right) \;  
\overline{u} \rangle.     
\end{eqnarray}
\end{definition}
\section{Spaces $C^n_m(V, \W)$ of a complex} 
\label{application}
 The spaces $C^n_m(V, \W)$, $n\ge 0$, $m \ge 0$, and differentials $\delta^n_m$,  
for the chain-cochain 
complex $(C^n_m(V, \W), \delta^n_m)$ were introduced in \cite{Huang, FQ}). 
In this Section we recal the definition and properties of $C^n_m(V, \W)$.   
\subsection{$E$-elements}
 For $w\in W$, the $\overline{W}$-valued function  
 given by 
$$
E^{(n)}_W(v_1, z_1; \ldots; v_n, z_n; w) 
= E(\omega_W(v_1, z_1) \ldots \omega_W(v_n, z_n)w),    
$$
where an element $E(\phi)$ is a $\overline{W}$-valued rational function, 
 \begin{equation}
\label{poper}
\omega_W\left(
v_i,  
z_i\right)
= Y_W\left(  
dz_i^{\wt(v_i)} \otimes v_i,  
z_i)\right),   
\end{equation}
 $\phi\in \overline{W}$ is given by 
(see notations for $\omega_W(.,.)$ in Section \ref{spaces}) 
\[
E(\phi) 
=R(\langle w', \phi \rangle).  
\]
One defines  
\[
E^{W; (n)}_{WV}(w; v_1, z_1 ; \ldots; v_n, z_n)   
=E^{(n)}_W(v_1, z_1; \ldots;  v_n, z_n; w),
\]
where 
$E^{W; (n)}_{WV}(w; v_1, z_1 ; \ldots; v_n, z_n)$ is   
an element of $\overline{W}_{z_1, \dots, z_n}$.
For $(z_1, \dots, z_n, \zeta)\in 
F_{n+1}\C$, $(v_1, \dots, v_n)\in V$, and $w\in W$,  
set 
\begin{eqnarray*}
  E^{(n, 1)}_W(v_1, z_1;  \ldots; v_n, z_n; w, \zeta)  
 =E\left(Y_W(v_1, z_1)\ldots Y_W(v_n, z_n) \; Y_{WV}^W(w, \zeta) \one_V \right). 
\end{eqnarray*}
One defines
\[
 \F \circ \left(E^{(l_1)}_{V;\;\one}\otimes \ldots \otimes E^{(l_n)}_{V;\;\one}\right): 
V^{\otimes m+n}\to \overline{W}_{z_1,  \dots, z_{m+n}}, 
\] 
by
\begin{eqnarray*}
&&(\F\circ (E^{(l_1)}_{V;\;\one}\otimes \ldots \otimes  
E^{(l_{n})}_{V;\;\one}))(v_1 \otimes \ldots \otimes v_{m+n-1}) 
\nn
&&=E(\F(E^{(l_1)}_{V; \one}(v_1 \otimes \ldots \otimes v_{l_1})\otimes \ldots
\nn 
&&\quad\quad\quad\quad\quad \otimes 
E^{(l_n)}_{V; \one} 
(v_{l_1+\ldots +l_{n-1}+1}\otimes \ldots  
\otimes v_{l_1+\ldots +l_{n-1}+l_n}))),   
\end{eqnarray*}
and 
\[
E^{(m)}_W \circ_0 \F: V^{\otimes m+n}\to 
\overline{W}_{z_1, \dots, z_{m+n-1}}, 
\]
 is given by 
\begin{eqnarray*}
&& 
(E^{(m)}_W\circ_0 \F)(v_1\otimes \ldots \otimes v_{m+n}) 
\nn 
&&
=E(E^{(m)}_W(v_1 \otimes \ldots\otimes v_m;
\F(v_{m+1}\otimes \ldots\otimes v_{m+n}))).
\end{eqnarray*}
Finally,  
\[
E^{W; (m)}_{WV}\circ_{m+1} \F: V^{\otimes m+n}\to  
\overline{W}_{z_1, \dots, z_{m+n-1}}, 
\]
 is defined by 
\begin{eqnarray*}
(E^{W; (m)}_{WV}\circ_{m+1 }\F)(v_1 \otimes \ldots \otimes v_{m+n}) 
 =E(E^{W; (m)}_{WV}(\F(v_1 \otimes \ldots\otimes v_n)
; v_{n+1}\otimes \ldots\otimes v_{n+m})). 
\end{eqnarray*}
In the case that $l_1=\ldots=l_{i-1}=l_{i+1}=1$ and $l_i=m-n-1$, for some $1 \le i \le n$, 
we will use $\F\circ_i E^{(l_i)}_{V;\;\one}$ to 
denote $\F\circ (E^{(l_1)}_{V;\;\one}\otimes \ldots  
\otimes E^{(l_n)}_{V;\;\one})$. 
\subsection{Maps composable with vertex operators}
\label{composable}
Let us recall the definition of maps composable with a number of vertex operators \cite{Huang}. 
\begin{definition}
\label{composabilitydef}
For a $V$-module 
\[
W=\coprod_{n\in \C} W_{(n)}, 
\]
 and $m\in \C$, 
let
\[
P_m: \overline{W}\to W_{(m)},  
\]
 be the projection from 
$\overline{W}$ to $W_{(m)}$.  
Let 
\[
\F: V^{\otimes n} \to \W_{z_1, \dots, z_n},  
\]
 be a map. For $m\in \N$,  
$\F$ is called 
composable with $m$ vertex operators if  
the following conditions are satisfied:

1)  Let $l_1, \dots, l_n \in \Z_+$ such that $l_1+\ldots +l_n=m+n$, 
$v_1, \dots, v_{m+n}\in V$, and $w'\in W'$. Set  
 \begin{eqnarray}
\label{psii}
\Psi_i 
&
=
&
E^{(l_i)}_V(v_{k_1}, z_{k_1}- \zeta_i; 
 \ldots; 
v_{k_i}, z_{k_i}- \zeta_i  
 ; \one_V),     
\end{eqnarray}
where
\begin{eqnarray}
\label{ki}
 {k_1}={l_1+\ldots +l_{i-1}+1}, \quad  \ldots, \quad  {k_i}={l_1+\ldots +l_{i-1}+l_i},  
\end{eqnarray} 
for $i=1$, $\dots$, $n$. Then there exist positive integers $N^n_m(v_i, v_j)$  
depending only on $v_i$ and $v_j$ for $i$, $j=1$, $\dots$, $k$, $i\ne j$ such that the series 
\begin{eqnarray}
\label{Inm0}
\mathcal I^n_m(\F)=
\sum_{r_1, \dots, r_n \in \Z}\langle w',  
\F(P_{r_1}\Psi_1; \zeta_1;  
 \ldots; 
P_{r_n} \Psi_n, \zeta_n) 
\rangle,
\end{eqnarray} 
is absolutely convergent when  
\begin{eqnarray}
\label{granizy1}
|z_{l_1+\ldots +l_{i-1}+p}-\zeta_i| 
+ |z_{l_1+\ldots +l_{j-1}+q}-\zeta_i|< |\zeta_i -\zeta_j|, 
\end{eqnarray} 
for $i$, $j=1$, $\dots$, $k$, $i\ne j$ and for $p=1$,  
$\dots$,  $l_i$ and $q=1$, $\dots$, $l_j$.  
The sum must be analytically extended to a
rational function
in $(z_1, \dots, z_{m+n})$, 
 independent of $(\zeta_1, \dots, \zeta_n)$, 
with the only possible poles at 
$z_i=z_j$, of order less than or equal to 
$N^n_m(v_i, v_j)$, for $i$, $j=1$, $\dots$, $k$,  $i\ne j$.  

2)
 For $v_1, \dots, v_{m+n}\in V$, and $(z_1, \ldots, z_{n+m})\in \C$  there exist 
positive integers $N^n_m(v_i, v_j)$, depending only on $v_i$ and  
$v_j$, for $i$, $j=1$, $\dots$, $k$, $i\ne j$, such that for arbitrary $w'\in W'$, and 
 such that the series  
\begin{eqnarray}
\label{Jnm0}
\mathcal J^n_m(\F)=  
\sum_{q\in \C} \langle w', E^{(m)}_W \Big(v_1, z_1; \ldots; 
v_m, z_m;   
P_q( \F( v_{m+1}, z_{m+1}; \ldots; v_{m+n}, z_{m+n}) \Big)\rangle,  
\end{eqnarray} 
is absolutely convergent when 
\begin{eqnarray}
\label{granizy2}
z_i \ne z_j, \quad i\ne j, \quad 
\nn
|z_i|>|z_k|>0, 
\end{eqnarray}
 for $i=1$, $\dots$, $m$, 
and $k=m+1$, $\dots$, $m+n$, and the sum can be analytically extended to a 
rational function 
in $(z_1, \dots, z_{n+m})$ with the only possible poles at  
$z_i=z_j$, of orders less than or equal to 
$N^n_m(v_i, v_j)$, for $i$, $j=1$, $\dots$, $k$, $i\ne j$.  
\end{definition} 
In \cite{Huang}, we the following useful proposition was proven: 
\begin{proposition}
\label{comp-assoc}
Let $\F: V^{\otimes n}\to \W_{z_1, \dots, z_n}$  
be composable with $m$ vertex operators. Then we have:
\begin{enumerate}
\item For $p\le m$, $\F$ is 
composable with $p$ vertex operators and for
$p$, $q\in \Z_+$ such that $p+q\le m$ and 
$l_1, \dots, l_n \in \Z_+$ such that $l_1+\ldots +l_n=p+n$, 
$\F\circ (E^{(l_1)}_{V;\;\one}\otimes  
\ldots \otimes E^{(l_n)}_{V;\;\one})$ and $E^{(p)}_W \circ_{p+1}\F$  
are composable with $q$ vertex operators.

\item For $p$, $q\in \Z_+$ such that $p+q\le m$, 
$l_1$, $\dots$, $l_n \in \Z_+$ such that $l_1 +\ldots +l_n=p+n$ and
$k_1, \dots, k_{p+n} \in \Z_+$ such that $k_1+\ldots +k_{p+n}=q+p+n$,
we have
\begin{eqnarray*}
&(\F\circ (E^{(l_{1})}_{V;\;\one}\otimes 
\ldots \otimes E^{(l_{n})}_{V;\;\one}))\circ 
(E^{(k_{1})}_{V;\;\one}\otimes 
\ldots \otimes E^{(k_{p+n})}_{V;\;\one})&\nn
&=\F\circ (E^{(k_{1}+\ldots +k_{l_{1}})}_{V;\;\one}\otimes 
\ldots \otimes E^{(k_{l_{1}+\ldots +l_{n-1}+1}+\ldots +k_{p+n})}_{V;\;\one}).& 
\end{eqnarray*}
\item For $p$, $q\in \Z_+$ such that $p+q\le m$ and 
$l_1, \dots, l_n \in \Z_+$ such that $l_1+\ldots +l_n=p+n$,
we have
$$E^{(q)}_W\circ_{q+1} (\F\circ (E^{(l_1)}_{V;\;\one}\otimes 
\ldots \otimes E^{(l_{n})}_{V;\;\one}))
=(E^{(q)}_W\circ_{q+1} \F)\circ (E^{(l_1)}_{V;\;\one}\otimes 
\ldots \otimes E^{(l_n)}_{V;\;\one}).$$ 
\item For $p$, $q\in \Z_{+}$ such that $p+q\le m$, we have 
$$E^{(p)}_W\circ_{p+1} (E^{(q)}_W\circ_{q+1}\F) 
=E^{(p+q)}_W\circ_{p+q+1}\F.$$ 
\end{enumerate}
\end{proposition}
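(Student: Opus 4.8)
The plan is to handle the four assertions in increasing order of difficulty, reducing each to the two defining conditions of composability in Definition \ref{composabilitydef}, together with the associativity of the $W$-vertex operators $Y_W$ (equivalently, the convergence and commutativity recorded in Proposition \ref{n-comm}) and the uniqueness of the rational extension of an analytic function. First I would prove the auxiliary claim that composability of $\F$ with $m$ vertex operators entails composability with every $p \le m$. The cleanest route is specialization by the vacuum: given a decomposition $l_1 + \cdots + l_n = p + n$, one realizes the series \eqref{Inm0} and \eqref{Jnm0} for $p$ as the restriction of the corresponding $m$-series obtained by inserting $\one_V$ into the $m - p$ extra slots, using that $Y_V(\one_V, z)$ is the identity so that $E^{(l+1)}_{V;\one}$ collapses to $E^{(l)}_{V;\one}$. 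Absolute convergence on the smaller region cut out by \eqref{granizy1} and \eqref{granizy2}, rationality, and the bound on the orders $N^n_m(v_i, v_j)$ are then inherited, since a specialization of a rational function is rational with no worse poles.

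The heart of part (1) is that $\F \circ (E^{(l_1)}_{V;\one} \otimes \cdots \otimes E^{(l_n)}_{V;\one})$ with $l_1 + \cdots + l_n = p + n$, and $E^{(p)}_W \circ_{p+1} \F$, are composable with $q$ vertex operators whenever $p + q \le m$. Here I would expand the defining operations: composing either map with $q$ further vertex operators produces, after unfolding all the $E$-elements into matrix elements of products of vertex operators, exactly a configuration in which $\F$ is composed with a total of $p + q$ vertex operators (the inner insertions contribute $\sum_i (l_i - 1) = p$ and the outer ones contribute $q$). Since $p + q \le m$ by hypothesis and $\F$ is composable with $m$, hence with $p+q$ by the auxiliary claim, the required absolute convergence and rational extension with controlled poles follow. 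The delicate point, which I expect to be the main obstacle, is that the region \eqref{granizy1}, \eqref{granizy2} for the composed map must be checked to lie inside a translated copy of the region furnished by the $m$-composability of $\F$; relating the auxiliary coordinates $\zeta_i$ to the original $z_i$ is done through the $L_V(-1)$-translation and $L_V(0)$-scaling properties, and it is this region-tracking, together with the bookkeeping of the pole orders $N^n_m(v_i, v_j)$, that carries all the analytic weight of the proposition.

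With composability secured, parts (2), (3) and (4) become identities of $\overline{W}$-valued rational functions, and none requires new analytic input. For each I would pair both sides with an arbitrary $w' \in W'$, expand every $E$-element into a matrix element of a product of vertex operators, observe that both sides are absolutely convergent in a common nonempty region, and use associativity of the $Y_W$ operators to identify the two sums there; since both sides extend to rational functions in $(z_1, \dots, z_{q+p+n})$ with poles only at $z_i = z_j$, uniqueness of analytic continuation forces equality everywhere. Concretely, part (2) is the associativity of iterated insertion and reduces to regrouping the indices $k_1, \dots, k_{p+n}$ into the blocks indicated on the right-hand side; part (3) is the commutation of the outer operation $E^{(q)}_W \circ_{q+1}$ with the inner insertion $(E^{(l_1)}_{V;\one} \otimes \cdots \otimes E^{(l_n)}_{V;\one})$, which holds because the two operations act on disjoint tensor slots of $V^{\otimes(q+p+n)}$; and part (4) is the nesting identity $E^{(p)}_W \circ_{p+1}(E^{(q)}_W \circ_{q+1}\F) = E^{(p+q)}_W \circ_{p+q+1}\F$, which follows from the fact that stacking a string of $p$ and then $q$ vertex operators $Y_W$ on the left of $\F$ equals stacking $p+q$ of them, i.e.\ from the associativity of $Y_W$. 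Thus once part (1) is in place, the remaining three identities are purely combinatorial consequences of the definitions plus the rationality already established.
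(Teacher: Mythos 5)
First, a remark on the ground truth: the paper contains no proof of Proposition \ref{comp-assoc} at all — it is imported verbatim from \cite{Huang} ("In \cite{Huang}\ldots the following useful proposition is proven"), so your attempt can only be measured against Huang's argument and against internal consistency, not against anything in this text. Within that frame, your first step is sound: specializing the series \eqref{Inm0} and \eqref{Jnm0} by placing $\one_V$ in the $m-p$ surplus slots and using $Y_V(\one_V,z)={\rm Id}_V$, $Y_W(\one_V,z)={\rm Id}_W$ does reduce composability with $p\le m$ operators to composability with $m$; the strictness of the inequalities \eqref{granizy1}, \eqref{granizy2} leaves room to place the dummy insertion points admissibly, as your sketch implicitly requires.

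The genuine gap is in the second half of part (1), and it is not merely "region-tracking." The series defining composability of $\F\circ(E^{(l_{1})}_{V;\;\one}\otimes \cdots \otimes E^{(l_{n})}_{V;\;\one})$ with $q$ vertex operators is an \emph{iterated} series: an outer sum over the projections coming from the $q$ new insertions, and inside each term the inner sums over projections $P_{r_{i}}$ that are already built into the definition of $\F\circ(E^{(l_{1})}_{V;\;\one}\otimes\cdots)$ via Definition \ref{composabilitydef}. Composability of $\F$ with $p+q$ operators gives absolute convergence of a \emph{single} multiple series, namely \eqref{Inm0} with each $\Psi_i$ replaced by the $E$-element built from the $i$-th block of the $k_a$'s; it does not by itself give absolute convergence of the iterated series, nor the right to interchange the two infinite summations so as to identify the two. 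That Fubini-type step is exactly where the analytic weight sits: one needs the associativity statement of Proposition \ref{correl-fn} (to resum each block $E^{(l_{i})}_{V}$ applied to projected $E^{(k_{a})}_{V}$-elements into the projected $E$-element of the merged block), together with an estimate — for instance Cauchy inequalities on coefficients, in the style of the proof of Proposition \ref{derga} — showing the double sum over all projection indices converges absolutely, hence can be summed in either order. Your text names this obstacle but then asserts its conclusion ("the required absolute convergence \ldots follow[s]" because $p+q\le m$) rather than proving it; as written, the argument is circular precisely at its crux. The same caveat applies, more mildly, to parts (2)--(4): reduction to associativity of $Y_W$ plus uniqueness of the rational extension is indeed the right mechanism, but each of those identities also involves interchanging nested infinite sums, which is licensed only once the convergence in part (1) has been genuinely established.
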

Finally, in \cite{Huang} we find the proof of the following   
\begin{proposition}
\label{correl-fn}
Let now $P_n: W\to W_{(n)}$, 
 for $n\in \C$ be the projection from $W$ to $W_{(n)}$. 
For $k$, $l_1, \dots, l_{n+1} \in \Z_+$ and 
$v_1^{(1)},\dots, v_{l_1}^{(1)},\dots, v_1^{(n+1)}, \dots$, 
$v_{l_{n+1}}^{(n+1)} \in V$, $w\in W$, and $w'\in W'$, the series 
\begin{eqnarray}
\label{va-conv-axiom}
&&\sum_{r_1, \ldots, r_n \in \Z, r_{n+1}\in \C}  
\langle w', E^{(n, 1)}_W \big( P_{r_1}  
(E^{(l_1)}_V(v_1^{(1)}, z_1^{(1)};  
\ldots;  
 v_{l_1}^{(1)}, z_{l_1}^{(1)}; \one_V, z^{(0)}_1) ); \ldots;  
\nn
&& \qquad \qquad  
P_{r_n} (E^{(l_n)}_V(v_1^{(n)}, z_1^{(n)};  
\ldots; 
 v_{l_n}^{(n)}, z_{l_n}^{(n)}; \one_V, z^{(0)}_n))   
\nn
&& \qquad \qquad
 P_{r_{n+1}}(E^{(l_{n+1})}_W(v_1^{(n+1)}, z_1^{(n+1)};  
\ldots; 
 v_{l_{n+1}}^{(n+1)}, z_{l_{n+1}}^{(n+1)}; w, z^{(0)}_{n+1})) \Big) 
\rangle,  
\end{eqnarray}
converges absolutely to  
\begin{eqnarray*} 
&&
 \langle w', E^{(n)}_W (  v_1^{(1)}, z_1^{(1)}+z^{(0)}_1; \ldots;   
v_{l_1}^{(1)}, z_{l_1}^{(1)}+z^{(0)}_1; \ldots;  
\nn
&&
\qquad \qquad 
 v_1^{(n+1)}, z_1^{(n+1)}+z^{(0)}_{n+1};  
v_{l_{n+1}}^{(n+1)},  z_{l_{n+1}}^{(n+1)}+z^{(0)}_{n+1}; w)  
 )\rangle, 
\end{eqnarray*}
when $0<|z_p^{(i)}| + |z_q^{(j)}|< |z^{(0)}_i-z^{(0)}_j|$  
for $i$, $j=1, \dots, n+1$, $i\ne j$, $p=1, \dots,  l_i$, $q=1, \dots, l_j$.
\end{proposition}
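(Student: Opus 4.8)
The plan is to derive the statement from the associativity (duality) of vertex operators together with the creation property of the vacuum, and then to control the resulting multiple series by a grading-based Cauchy estimate; the geometric hypothesis on the domain is precisely what forces absolute convergence.

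First I would unwind the inner data. For each $i=1,\dots,n$ the element $E^{(l_i)}_V(v_1^{(i)}, z_1^{(i)}; \ldots; v_{l_i}^{(i)}, z_{l_i}^{(i)}; \one_V, z^{(0)}_i)$ encodes the vector $Y_V(v_1^{(i)}, z_1^{(i)}) \cdots Y_V(v_{l_i}^{(i)}, z_{l_i}^{(i)})\one_V \in \overline{V}$ to be inserted at the point $z^{(0)}_i$, and for $i=n+1$ the analogous element with $w$ in place of $\one_V$ encodes a vector of $\overline{W}$ inserted at $z^{(0)}_{n+1}$. The projection $P_{r_i}$ extracts the homogeneous component of weight $r_i$, so summing over $r_i$ reassembles the full inner vector. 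Applying the associativity of $Y_W$ and $Y^W_{WV}$ with the vacuum property then yields, at the formal level and block by block, the identity
\[
Y_W\!\left(Y_V(v_1^{(i)}, z_1^{(i)}) \cdots Y_V(v_{l_i}^{(i)}, z_{l_i}^{(i)})\one_V, z^{(0)}_i\right) = Y_W(v_1^{(i)}, z^{(0)}_i + z_1^{(i)}) \cdots Y_W(v_{l_i}^{(i)}, z^{(0)}_i + z_{l_i}^{(i)}),
\]
together with the corresponding identity for the last block containing $w$; substituting these into $E^{(n,1)}_W$ reproduces the target $E^{(n)}_W$ with every insertion point shifted to $z_p^{(i)} + z^{(0)}_i$.

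The substance of the proof is to promote these formal identities to an absolutely convergent statement, and I would treat the blocks one at a time. By Proposition \ref{n-comm} each inner matrix element of $Y_V(v_1^{(i)}, z_1^{(i)}) \cdots Y_V(v_{l_i}^{(i)}, z_{l_i}^{(i)})\one_V$ is absolutely convergent to a rational function whose radius of convergence is governed by $\max_p |z_p^{(i)}|$; the grading restriction on $V$ and $W$ then bounds the norm of the $r_i$-th homogeneous projection by a geometric factor in this inner radius, via the same Cauchy estimate used in the proof of Proposition \ref{derga}. Inserting that projection at $z^{(0)}_i$ and pairing with $w'$ gives a matrix element whose expansion converges with radius governed by the separations $|z^{(0)}_i - z^{(0)}_j|$.

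The main obstacle, and the precise role of the hypothesis $0 < |z_p^{(i)}| + |z_q^{(j)}| < |z^{(0)}_i - z^{(0)}_j|$, is the absolute convergence of the full multiple sum over $(r_1,\dots,r_n,r_{n+1})$ and the consequent legitimacy of interchanging summations. The stated domain asserts exactly that each inner radius is dominated by the corresponding separation, so the product of the geometric majorants supplied by the two Cauchy estimates is summable; the composability conditions of Definition \ref{composabilitydef} guarantee in addition that the analytic continuation has poles only along the diagonals $z^{(0)}_i = z^{(0)}_j$. Since every finite partial sum is a matrix element of a product of vertex operators and hence extends to a rational function, absolute convergence on the open domain forces the limit to coincide with $\langle w', E^{(n)}_W(\ldots)\rangle$ by the uniqueness of rational extension, which completes the plan.
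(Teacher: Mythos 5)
The paper offers no proof of Proposition \ref{correl-fn} at all: it is quoted from \cite{Huang} (``in \cite{Huang} we find the proof of the following''), so your outline can only be measured against the standard argument given there, whose general shape --- unwinding the $E$-elements into products of vertex operators, invoking duality and the creation property, and controlling the sums over the projections $P_{r_i}$ by grading estimates --- your plan does follow. The genuine gap is in how your steps interlock. The ``block by block'' identity
\[
Y_W\bigl(Y_V(v_1^{(i)},z_1^{(i)})\cdots Y_V(v_{l_i}^{(i)},z_{l_i}^{(i)})\one_V,\,z_i^{(0)}\bigr)
= Y_W\bigl(v_1^{(i)},z_i^{(0)}+z_1^{(i)}\bigr)\cdots Y_W\bigl(v_{l_i}^{(i)},z_i^{(0)}+z_{l_i}^{(i)}\bigr)
\]
is not a formal identity that ``follows from associativity and the vacuum property.'' For $l_i=1$ it is the translation property $Y_W(e^{zL_V(-1)}v,z_0)=Y_W(v,z_0+z)$, but for $l_i\ge 2$ the two sides are expansions organized in different variables (on the right, each $(z_i^{(0)}+z_p^{(i)})^m$ with $m<0$ must be expanded in a chosen direction), so there is no common formal-series space in which they can be compared term by term; the duality axiom of Definition \ref{grares} covers only two operators. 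Promoting it to $l_i$ operators, and then across blocks, requires an induction in which at each step a single sum over one $r_i$ is recognized as the convergent expansion of a rational function identified at the previous step, on a region whose nonemptiness is exactly what the hypothesis $|z_p^{(i)}|+|z_q^{(j)}|<|z_i^{(0)}-z_j^{(0)}|$ guarantees. That induction \emph{is} the content of the proposition, and your plan asserts its conclusion at the outset.

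The second, related defect is your closing step. Absolute convergence of a series whose partial sums are rational functions does not force the sum to be rational, much less equal to a prescribed rational function: $\sum_{n\ge 0}z^n/n!$ has polynomial partial sums, converges absolutely everywhere, and sums to $e^z$. So ``uniqueness of rational extension'' cannot identify the limit of \eqref{va-conv-axiom} with $\langle w', E^{(n)}_W(\ldots)\rangle$. Your convergence analysis itself --- Cauchy-type coefficient bounds on the projections, patterned on the proof of Proposition \ref{derga}, with Proposition \ref{n-comm} supplying the inner rational functions --- is sound as far as summability of the multiple series in the stated domain is concerned; but the identification of its sum has to come from the inductive duality argument, where each rearrangement of the multiple sum is licensed by the absolute convergence already established at that stage. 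With that induction supplied in place of the asserted formal identity and the rational-extension step, your outline becomes the standard proof.
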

\subsection{Definition of $C^n_m(V, \W)$-spaces} 
\label{spaces}
In this Subsection we recall the definition of spaces $C^n_m(V, \W)$   
 for a grading-restricted vertex algebra $V$. 
 First, recall the definition of shuffles \cite{Huang}.   
For $l \in \N$ and $1\le s \le l-1$, let $J_{l; s}$ be the set of elements of 
$S_l$ which preserve the order of the first $s$ numbers and the order of the last 
$l-s$ numbers, that is, 
\[
J_{l; s}=\{\sigma\in S_l\;|\;\sigma(1)<\ldots <\sigma(s),\; 
\sigma(s+1)<\ldots <\sigma(l)\}.
\]
The elements of $J_{l; s}$ are called shuffles, and we use the notation 
\[
J_{l; s}^{-1}=\{\sigma^{-1}\;|\; \sigma\in J_{l; s}\}.
\]
For a set of $n$ elements $\left(v_1, \ldots, v_n\right)$  
of a grading-restricted vertex algebra $V$,   
we consider  maps 
\begin{equation} 
\label{maps}
\F\left(v_1, z_1; \ldots; v_n, z_n \right): V^{\otimes n} \rightarrow \W_{z_1, \ldots, z_n} 
\end{equation}
 (see Section \ref{valued} for the definition of a 
$\W_{z_1, \dots, z_n}$ space).  
Note that similar to considerations of \cite{BZF}, 
 \eqref{bomba}  
 can be treated as 
${\rm Aut}_{z_1, \ldots, z_n}\; \Oo^{(n)}$  
-torsor  
of the product of groups of formal parameter transformations.   
 In what follows, according to definitions of Appendix \ref{valued}, 
 when we write an element $\F$ of the space $\W_{z_1, \ldots, z_n}$, we actually have in mind 
 corresponding matrix element $\langle w', \F\rangle$ that 
  absolutely converges (in a certain domain) to 
a rational form-valued function $R(\langle w', \F \rangle)$. Quite frequently we will write 
$\langle w', \F \rangle$ which would denote a rational $\W$-valued form.    
In notations, we would keep tensor products of vertex algebra elements with 
$\wt$-powers of $z$-differentials  
when it is inevitable only. 

 In the next Section we prove, that 
 for arbitrary $v_i \in V$, $1 \le i \le n$,  
with formal parameters $z_i$ 
an element \eqref{bomba} 
 as well as the vertex operators \eqref{poper} 
 are invariant with respect to the action of the group 
${\rm Aut}_{z_1, \ldots, z_n}\; \Oo^{(n)}$ 
In \eqref{poper} we mean the ordinary vertex operator (as defined in Appendix \ref{grading})
 not affecting the tensor product 
with corresponding differential. 
In \cite{Huang} one finds:
\begin{proposition} 
\label{tudaty}
The subspace of $\hom(V^{\otimes n}, 
\W_{z_1, \dots, z_n})$ consisting of  maps having
the $L_V(-1)$-derivative property, having the $L_V(0)$-conjugation property
or being composable with $m$ vertex operators is invariant under the 
action of $S_n$. \hfill $\qed$
\end{proposition}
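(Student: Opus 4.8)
The plan is to verify separately that each of the three properties is preserved by the $S_{n}$-action \eqref{sigmaction}, which permutes the pairs $(v_i,z_i)$ simultaneously. Since $S_{n}$ is generated by adjacent transpositions it would suffice to treat those, but the relabeling argument below applies verbatim to an arbitrary $\sigma$. For the first two properties the argument is pure bookkeeping, while composability is the substantive case.

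Consider first the $L_V(-1)$-derivative property. Writing $j=\sigma^{-1}(i)$ for the slot in which the variable $z_i$ occurs inside $\sigma(\F)$, I would apply $\partial_{z_i}$ to the identity $\sigma(\F)(v_1,z_1;\ldots;v_n,z_n)=\F(v_{\sigma(1)},z_{\sigma(1)};\ldots;v_{\sigma(n)},z_{\sigma(n)})$ and use \eqref{lder1} for $\F$ at its $j$-th slot. Since $v_{\sigma(j)}=v_i$, this converts $\partial_{z_i}$ into the insertion of $L_V(-1)v_i$ at the $i$-th entry of $\sigma(\F)$, which is exactly \eqref{lder1} for $\sigma(\F)$. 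The total-derivative identity \eqref{lder2} then follows because $\sum_{i=1}^n\partial_{z_i}$ is manifestly invariant under relabeling of the variables, so it equals $(\sum_j\partial_{z_j}\F)$ evaluated at the permuted arguments, i.e. $L_W(-1).\sigma(\F)$. For the $L_V(0)$-conjugation property the key observation is that the scaling $z^{L_V(0)}$ and the rescalings $zz_i$ in \eqref{loconj} act uniformly on every argument, so they commute with the relabeling: computing $z^{L_W(0)}\sigma(\F)(v_1,z_1;\ldots)=z^{L_W(0)}\F(v_{\sigma(1)},z_{\sigma(1)};\ldots)$, applying \eqref{loconj} for $\F$, and regrouping the scaled pairs $(z^{L_V(0)}v_{\sigma(i)},zz_{\sigma(i)})$ yields precisely $\sigma(\F)(z^{L_V(0)}v_1,zz_1;\ldots;z^{L_V(0)}v_n,zz_n)$.

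The main work is composability. Fix a choice of $l_1,\dots,l_n$ and of $v_1,\dots,v_{m+n}$ entering Definition \ref{composabilitydef}. The effect of $\sigma$ is to permute both the blocks that build the auxiliary elements $\Psi_i$ of \eqref{psii} and the auxiliary points $\zeta_i$; after reindexing the independent summation variables $r_1,\dots,r_n$ accordingly, the series $\mathcal I^n_m(\sigma(\F))$ of \eqref{Inm0} becomes $\mathcal I^n_m(\F)$ written with block sizes $(l_{\sigma(1)},\dots,l_{\sigma(n)})$ and correspondingly relabeled variables. Because composability of $\F$ is required for every choice of block sizes, absolute convergence holds in the region obtained from \eqref{granizy1} by the same relabeling, and the analytic continuation is the rational function of $\F$ with its variables renamed. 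The analogous reindexing handles the series \eqref{Jnm0} and its region \eqref{granizy2}, noting that $\sigma$ permutes only the $n$ arguments fed into $\F$. Two facts make this continuation a valid certificate of composability for $\sigma(\F)$: the limiting rational function is symmetric under permutation of the arguments of $\F$ by Proposition \ref{n-comm}, so it is again a genuine rational function in $(z_1,\dots,z_{m+n})$ with poles only along the diagonals $z_i=z_j$; and the pole-order bounds $N^n_m(v_i,v_j)$ depend only on the two vectors involved and are symmetric in $i,j$, hence are preserved under the relabeling.

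The step I expect to be most delicate is this last one: tracking how the block decomposition and the associated multi-index $(r_1,\dots,r_n)$ in \eqref{Inm0} transform under $\sigma$, and confirming that the permuted convergence domain is nonempty and that the bound on pole orders is genuinely unchanged. Once the reindexing is set up cleanly and the symmetry of the rational limit is invoked, invariance of the composable subspace follows at once.
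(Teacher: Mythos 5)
The paper does not actually prove this proposition: it is quoted from \cite{Huang} and stamped with $\qed$, so there is no argument in the text to compare yours against; your proposal must be judged on its own. Its structure is correct and is essentially the expected argument. The $L_V(-1)$-derivative and $L_V(0)$-conjugation properties transfer by pure bookkeeping, exactly as you say: $\partial_{z_i}$ lands in the slot $j=\sigma^{-1}(i)$ of $\F$, while $\sum_i\partial_{z_i}$ and the uniform rescaling $(v_i,z_i)\mapsto(z^{L_V(0)}v_i,zz_i)$ are manifestly permutation-invariant. Composability transfers because Definition \ref{composabilitydef} quantifies over \emph{all} block sizes $l_1,\dots,l_n$ and \emph{all} vectors $v_1,\dots,v_{m+n}$, so after reindexing $r'_i=r_{\sigma(i)}$ the series $\mathcal I^n_m(\sigma(\F))$ and $\mathcal J^n_m(\sigma(\F))$ are literally instances of the corresponding series for $\F$ with permuted data, and the regions \eqref{granizy1} and \eqref{granizy2} are carried into themselves by the relabeling. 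One step, however, is misargued: you invoke Proposition \ref{n-comm} to claim that the limiting rational function is symmetric under permutation of the arguments of $\F$. That is false for a general composable $\F$ --- Proposition \ref{n-comm} concerns matrix elements of products of vertex operators, not arbitrary elements of $\hom(V^{\otimes n},\W_{z_1,\dots,z_n})$, and if those limits were always symmetric the $S_n$-action on them would be trivial, which is precisely not the point of the proposition. Fortunately the symmetry is not needed: all the argument requires is the elementary fact that precomposing a rational function in $(z_1,\dots,z_{m+n})$ whose poles lie only on the diagonals $z_i=z_j$ with a permutation of the coordinates yields again a rational function with poles only on the diagonals (the diagonal locus is permutation-invariant as a set), together with your correct observation that the bounds $N^n_m(v_i,v_j)$ travel with the pairs of vectors. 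With that one repair --- replacing the appeal to Proposition \ref{n-comm} by this trivial change-of-variables remark --- your proof is complete.
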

 We next have 
\begin{definition}
\label{initialspace}
For an arbitrary set of vertex algebra elements $v_i$, $v_j \in V$,  
and formal complex parameters $z_i$, $z_j$, $1\le i \le n$, 
$1\le j \le m$,  
 $n \ge 0$, $m \ge 0$,    
we denote by $C^n_m(V, \W)$,      
the space of all maps \eqref{maps} 
\begin{equation}
\label{mapy}
 \F(v_1, z_1; \ldots; v_n, z_n): V^{\otimes n } \rightarrow \W_{z_1,
\ldots, z_n},   
\end{equation} 
composable with a $m$ of vertex operators \eqref{poper}  
with vertex algebra elements $v_j$, 
 with formal parameters $z_j$.   
We assume also that \eqref{bomba}   
satisfy $L_V{(-1)}$-derivative \eqref{lder1}, $L_V(0)$-conjugation 
\eqref{loconj} properties, and the symmetry property 
with respect to action of the symmetric group $S_n$: 
\begin{equation}
\label{shushu} 
\sum_{\sigma\in J_{n; s}^{-1}}(-1)^{|\sigma|} 
\F\left(v_{\sigma(1)}, z_{\sigma(1)}; \ldots; v_{\sigma(n)},  z_{\sigma(n)} \right) =0. 
\end{equation}
\end{definition}

In Appendix \ref{proof} we give a proof of 
the following 
\begin{proposition} 
\label{nezc}
For primary vectors of a quasi-conformal grading-restricted vertex algebra $V$,      
 Definition \ref{initialspace} is canonical, i.e.,
invariant with respect to the group of $n$-dimensional 
transformations
\[ 
 (z_1, \ldots, z_n) \mapsto (z'_1, \ldots, z'_n)
= (\rho_1(z_1, \ldots, z_n), \ldots, \rho_n (z_1, \ldots, z_n) ),  
\] 
of formal parameters $z_i$,  $1 \le i \le n$.  
\end{proposition}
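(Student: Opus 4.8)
The statement to prove is Proposition \ref{nezc}: that the defining conditions of $C^n_m(V,\W)$ (the $L_V(-1)$-derivative property, $L_V(0)$-conjugation, composability with $m$ vertex operators, and the shuffle symmetry \eqref{shushu}) survive, for primary vectors, under an arbitrary change of formal parameters $(z_1,\ldots,z_n)\mapsto(\rho_1,\ldots,\rho_n)$ in the group $\mathrm{Aut}_{z_1,\ldots,z_n}\Oo^{(n)}$. The plan is to reduce everything to Proposition \ref{ndimwinv}, which already asserts that the underlying form \eqref{bomba} is invariant under precisely this group of transformations. The key observation is that Definition \ref{initialspace} packages an element of $C^n_m(V,\W)$ as a map into the space $\W_{z_1,\ldots,z_n}$ of rational \emph{differential} forms; because each $v_i$ is tensored with $dz_i^{\wt(v_i)}$, the transformation law of the differentials exactly compensates the transformation of the coefficient function, and it is this compensation that Proposition \ref{ndimwinv} encodes.

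\textbf{Steps.} First I would note that the invariance of the bare form \eqref{bomba} is Proposition \ref{ndimwinv}, so the entire task is to check that each \emph{property} cutting out $C^n_m(V,\W)$ from $\hom(V^{\otimes n},\W_{z_1,\ldots,z_n})$ is preserved. Second, for the $L_V(-1)$-derivative property \eqref{lder1} and the $L_V(0)$-conjugation property \eqref{loconj}, I would invoke the vertex-algebraic identities relating $L_V(-1)$, $L_V(0)$ to translation and scaling of the formal parameter together with the chain rule for the induced action of $\rho$; since these properties are intrinsic differential/scaling relations on $\W_{z_1,\ldots,z_n}$, and by Proposition \ref{tudaty} the subspace they define is $S_n$-invariant, the real content is just naturality of these operators under $\mathrm{Aut}_{z_1,\ldots,z_n}\Oo^{(n)}$. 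Third, for composability with $m$ vertex operators (Definition \ref{composabilitydef}), I would argue that the convergence regions \eqref{granizy1}, \eqref{granizy2} and the pole structure at $z_i=z_j$ are geometric data that a change of coordinates $\rho$ transports to equivalent data: the series $\mathcal I^n_m(\F)$ and $\mathcal J^n_m(\F)$ remain absolutely convergent and extend to rational functions with poles only at the transformed diagonal, because $\rho$ is a biholomorphism fixing the configuration-space structure. Finally, for the shuffle symmetry \eqref{shushu} I would observe that $\mathrm{Aut}_{z_1,\ldots,z_n}\Oo^{(n)}$ acts compatibly with the $S_n$-action \eqref{sigmaction}, so a simultaneous relabelling commutes with the coordinate change and the alternating sum over $J_{n;s}^{-1}$ is preserved verbatim.

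\textbf{Main obstacle.} The delicate point will be composability, since it is the only condition involving genuine analytic control rather than a formal identity. The difficulty is that under a nonlinear $\rho$ the auxiliary parameters $\zeta_i$ in \eqref{psii}--\eqref{Inm0} and the inequalities \eqref{granizy1} are stated in the original coordinates, so I must verify that after transformation the series still converges on a (possibly reshaped) nonempty domain and that the analytic continuation has poles \emph{only} at $z_i=z_j$ of bounded order $N^n_m(v_i,v_j)$, with no spurious singularities introduced by $\rho$. I expect to handle this by restricting to primary vectors, for which the $\wt(v_i)$-differential weighting gives a clean tensorial transformation and hence no anomalous terms, and by appealing to the fact that $\rho\in\mathrm{Aut}_{z_1,\ldots,z_n}\Oo^{(n)}$ is invertible with holomorphic inverse so that rationality and the location of poles at the diagonal are coordinate-independent features. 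Once this is established, collecting the four verifications yields the claimed invariance of Definition \ref{initialspace}.
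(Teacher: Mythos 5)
Your overall architecture --- quote Proposition \ref{ndimwinv} for the invariance of the underlying form \eqref{bomba}, then check one by one that the conditions cutting $C^n_m(V,\W)$ out of $\hom(V^{\otimes n},\W_{z_1,\ldots,z_n})$ are preserved --- is the same skeleton as the paper's proof in Appendix \ref{proof}, and your remarks on the shuffle symmetry \eqref{shushu} and on the role of primarity are sound. The divergence, and the genuine gap, is in the step you yourself flag as the main obstacle: preservation of composability with $m$ vertex operators.

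You propose to preserve the conditions of Definition \ref{composabilitydef} by arguing that $\rho\in{\rm Aut}_{z_1,\ldots,z_n}\,\Oo^{(n)}$ is ``a biholomorphism fixing the configuration-space structure,'' so that the convergence regions \eqref{granizy1}, \eqref{granizy2} and the diagonal poles are simply transported. But elements of ${\rm Aut}_{z_1,\ldots,z_n}\,\Oo^{(n)}$ are \emph{formal} power-series automorphisms: they have no constant term, need not converge on any open subset of $F_n\C$, and hence induce no map of the actual domains on which the series \eqref{Inm0} and \eqref{Jnm0} converge. There is no analytic ``transport'' available, so this step fails as stated. The paper's mechanism is instead algebraic: quasi-conformality (local nilpotency of ${\rm Der}_+\,\Oo^{(n)}$, semisimple integral $L_W(0)$) allows one to exponentiate the Virasoro action, via the identification \eqref{repro} and the commutation formula \eqref{infaction000}, to operators $R(\rho_1,\ldots,\rho_n)$ on $V^{\otimes n}$ giving a representation of the group; the crux is then an $n$-dimensional version of the Ben-Zvi--Frenkel torsor lemma, namely that each weighted vertex operator \eqref{poper} is itself an invariant object, because in the new coordinates it equals the conjugate $R_i(\rho)\, Y_W\left(v_i,\rho(z_1,\ldots,z_n)\right) R_i^{-1}(\rho)\, dw_i^{\wt(v_i)}$. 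Composability of $\F$ is then coordinate-independent because both $\F$ and the vertex operators inserted into it are invariant objects, so no re-verification of convergence in new coordinates is needed (and none would be possible for a merely formal $\rho$). Your appeal to the tensorial behaviour of primary vectors does point at the right phenomenon --- primarity kills the $L_V(k)v$, $k>0$, correction terms in \eqref{infaction000}, which is exactly why only the $\wt(v_i)$ power of the Jacobian appears --- but without constructing the representation $R(\rho)$ and the conjugation identity for the composed vertex operators, the preservation of composability, and with it the canonicity of Definition \ref{initialspace}, is not established.
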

In Appendix \ref{proof} we recall the proof of Proposition \ref{nezc}.  
\begin{remark}
 The condition of quasi-conformality is necessary  
in the proof of invariance of elements of the space  
$\W_{z_1, \ldots, z_n}$  
with respect to a vertex algebraic representation (cf. Appendix \ref{grading}) of the group 
${\rm Aut}_{z_1, \ldots, z_n}\; \Oo^{(n)}$.   
In what follows,  
 we will always assume the quasi-conformality of $V$-modules  
 when it concerns the spaces $C^n_m(V, \W)$. 
\end{remark}
\subsection{Coboundary operators} 
\label{coboundary}
In this Subsection we recall 
 the definition of 
 the coboundary operator 
for the spaces $C^n_m(V, \W)$, 
\begin{eqnarray}
\label{hatdelta} 
  \delta^n_m \F  
&=&    
\sum_{i=1}^n(-1)^i \; \F\left( \omega_V(v_i, z_i - z_{i+1}) \;    
 v_{i+1} \right)  
\nn
&+& 
 \omega_W \left(v_1, z_1 \right) \; \F (v_2, z_2; \ldots; v_n, z_n)   
\nn
 &+& (-1)^{n+1} 
 \omega_W(v_{n+1}, z_{n+1})  
\; \F(v_1, z_1; \ldots; v_n, z_n).
\end{eqnarray}
Note that it is assumed that the coboundary operator does not affect 
$dz_i^{\wt(v_i)}$-tensor multipliers in $\F$.  
In \cite{Huang} the following proposition is proved 
\begin{proposition}
\label{cochainprop}
The operator \eqref{hatdelta} obeys       
\begin{equation}
\label{conde}
{\delta}^n_m: C_m^n(V, \W)   
\to C_{m-1}^{n+1}(V, \W),   
\end{equation}  
\begin{equation}
\label{deltacondition}  
{\delta}^{n+1}_{m-1} \circ {\delta}^n_m=0,   
\end{equation} 
\begin{equation}
\label{hat-complex} 
0\longrightarrow C_m^0(V, \W) 
\stackrel{\delta^0_m}{\longrightarrow} C_{m-1}^1(V, \W) 
\stackrel{\delta^1_{m-1}}{\longrightarrow}\ldots 
\stackrel{\delta^{m-1}_1}{\longrightarrow}
 C_0^m(V, \W )\longrightarrow 0,  
\end{equation}
i.e., it provides the chain-cochain complex 
$\left(C_m^n(V, \W), \delta^n_m \right)$.  
\hfill $\qed$  
\end{proposition}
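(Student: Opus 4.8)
The plan is to establish the three assertions in order, with \eqref{conde} --- that $\delta^n_m$ maps $C^n_m(V,\W)$ into $C^{n+1}_{m-1}(V,\W)$ --- carrying the main weight, since \eqref{deltacondition} and \eqref{hat-complex} follow once the coboundary is seen to respect the structure of these spaces. To prove \eqref{conde} I would verify, for $\delta^n_m\F$, each of the defining conditions of Definition \ref{initialspace}: the $L_V(-1)$-derivative property \eqref{lder1}, the $L_V(0)$-conjugation property \eqref{loconj}, the shuffle symmetry \eqref{shushu}, and composability with $m-1$ vertex operators. The key observation is that the three summands of \eqref{hatdelta} are precisely pre- and post-compositions of $\F$ with the $E$-type maps of Section \ref{application}: the internal terms are the compositions $\F\circ_i E^{(2)}_{V;\one}$ that insert the twofold product $\omega_V(v_i, z_i - z_{i+1})v_{i+1}$ into the $i$-th slot, while the two boundary terms attach a single module vertex operator $\omega_W$ on the left and on the right of $\F$. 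Composability with $m-1$ operators then follows from part (1) of Proposition \ref{comp-assoc}, which asserts exactly that these compositions drop the number of available vertex operators by one; the derivative and conjugation properties are inherited from those of $\F$ via the corresponding identities for $\omega_V$ and $\omega_W$, and the shuffle symmetry is preserved because each summand respects the $S_{n+1}$-action in the manner guaranteed by Proposition \ref{tudaty}.

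For the square-zero relation \eqref{deltacondition} I would expand $\delta^{n+1}_{m-1}\circ\delta^n_m\F$ and sort the resulting terms into three families, according to whether each of the two applied operations is internal or a boundary attachment: internal--internal, internal--boundary, and boundary--boundary. Parts (2)--(4) of Proposition \ref{comp-assoc} supply the associativity identities that collapse the nested compositions $(\F\circ E)\circ E$ and $E\circ(E\circ\F)$ into single compositions; once this is done, the alternating signs $(-1)^i$ group the terms into cancelling pairs exactly as in the classical Hochschild differential. Concretely, contracting slots $i$ and then $j$ with $j > i+1$ cancels the opposite order, the two ways of combining a boundary operator with an adjacent internal contraction cancel by the associativity of $\omega_W$ with $\omega_V$, and the left--right boundary terms cancel by the commutativity furnished by Proposition \ref{n-comm}.

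The main obstacle is analytic rather than combinatorial: every identity used in the cancellation is a priori an identity of formal series in homogeneous components, and it must be upgraded to an equality of the rational functions $R(\langle w', \cdot\rangle)$ obtained after analytic continuation, in the sense of \eqref{deff}. The absolute convergence needed to reorder the sums over $\Z$ and $\C$, and to identify the continuations coming from the two orders of composition, is exactly what Proposition \ref{correl-fn} provides for the composability series \eqref{va-conv-axiom}, while Proposition \ref{n-comm} supplies the commutativity of the underlying products of vertex operators. I would therefore phrase the whole cancellation argument at the level of these rational functions, using part (1) of Proposition \ref{comp-assoc} to reduce the convergence of the doubly-composed expressions to that of the singly-composed ones already treated. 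With \eqref{conde} and \eqref{deltacondition} in hand, \eqref{hat-complex} is immediate: each $\delta^n_m$ lands in the next space while lowering $m$ by one, consecutive maps compose to zero, and running $m$ down to $0$ produces the displayed finite chain--cochain complex.
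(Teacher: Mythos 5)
First, a point of comparison: the paper itself offers no proof of this proposition --- it is stated with a terminal $\qed$ and explicitly imported from \cite{Huang} (``In \cite{Huang} the following proposition is proved''). So your proposal can only be measured against Huang's original argument, and in that comparison your outline is essentially the standard one: you correctly recognize the three families of terms in \eqref{hatdelta} as compositions of $\F$ with the $E$-type maps, invoke part (1) of Proposition \ref{comp-assoc} to get the drop from composability with $m$ operators to composability with $m-1$, use parts (2)--(4) together with duality of $\omega_V$/$\omega_W$ and the commutativity of Proposition \ref{n-comm} for the Hochschild-style pairwise cancellation in $\delta^{n+1}_{m-1}\circ\delta^{n}_{m}=0$, and you rightly insist that every cancellation must be performed at the level of the analytically continued rational functions $R(\langle w',\cdot\rangle)$ rather than of formal series. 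That is the same architecture as the proof in \cite{Huang}.

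There is, however, one genuine gap in your verification of \eqref{conde}. In this paper, Definition \ref{initialspace} builds the shuffle condition \eqref{shushu} into the space $C^{n}_{m}(V,\W)$, so proving \eqref{conde} requires showing that $\delta^{n}_{m}\F$ again satisfies \eqref{shushu}, and you dispose of this by appeal to Proposition \ref{tudaty}. That proposition does not do the job: it only asserts that the subspace of maps enjoying the $L_V(-1)$-derivative, $L_V(0)$-conjugation, and composability properties is stable under the $S_{n}$-action \eqref{sigmaction}; it says nothing about the interaction of the alternating shuffle sums with the three kinds of terms in \eqref{hatdelta}. In Huang's treatment this preservation statement is a separate, nontrivial combinatorial lemma (it is exactly what makes the shuffle-restricted spaces into a subcomplex of the larger complex of all composable maps), proved by splitting each shuffle in $J_{n+1;s}$ according to the position of the contracted pair $(i,i+1)$ and of the two boundary insertions, and matching the resulting signed terms. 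Without that argument --- or without dropping \eqref{shushu} from the definition of $C^{n}_{m}(V,\W)$, which is not an option here --- your proof of \eqref{conde} is incomplete, even though the rest of the plan (composability counting, square-zero cancellation, and the analytic upgrades via Propositions \ref{correl-fn} and \ref{n-comm}) is sound.
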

\section{Application: the product of $C^n_m(V, \W)$-spaces}
\label{productc}
In this Section we consider an application of the material of Section \ref{product} to   
 the spaces $C^n_m(V, \W)$ of a complex (Definition \ref{initialspace}) 
described in the previous Section.   
We introduce the product of elements of two spaces of the complex   
with the image in another space of the complex coherent with respect   
to the original differential \eqref{hatdelta}, and the symmetry property \eqref{shushu}.
 We prove the canonicity of the product, 
 and derive an analogue of Leibniz formula. 
\begin{definition}
For 
$\F(v_1, x_1; \ldots; v_k, x_k)  \in  C^k_m(V, \W)$,  and  
$\F(v'_1, y_1; \ldots; v'_n, y_n)  \in   C_{m'}^n(V, \W)$   
the product 
\begin{eqnarray*}
\F(v_1, x_1; \ldots; v_k, x_k) \cdot_\epsilon \F(v'_1, y_1; \ldots; v'_n, y_n)  
\mapsto \F\left( v_1, x_1; \ldots; v_k, x_k; v'_1, y_1; \ldots; v'_n, y_n; \epsilon \right),  
\end{eqnarray*}
 is a $\W_{x_1, \ldots, x_k; y_1, \ldots, y_n}$-valued rational form 
\begin{eqnarray}
\label{Z2n_pt_epsss0}
&& \langle w',  \F (v_1, x_1; 
\ldots; v_k, x_k; v'_1, y_1; \ldots; v'_n, y_n;  \epsilon) \rangle  
\nn
&& \qquad =\langle w',   
\F (v_1, x_1; \ldots; v_k, x_k) \cdot_\epsilon \F(v'_1, y_1; \ldots; v'_n, y_n) \rangle  
\nn 
& & \qquad  =  
\sum_{u\in V }  
 \langle w', Y^W_{WV}\left(   
\F (v_1, x_1;  \ldots; v_k, x_k), \zeta_1\right)\; u \rangle   
\nn
& &
\qquad   \qquad  \qquad  
 \langle w', Y^W_{WV}\left(  
\F(v'_1, y_1; \ldots; v'_n, y_n)  
 , \zeta_2 \right) \; \overline{u} \rangle,      
\end{eqnarray}
defined by \eqref{Z2n_pt_eps1q1}.   
\end{definition}
\begin{remark}
Let $t$ be the number of common vertex operators the mappings 
$\F(v_1, x_1$;  $\ldots$; $v_k, x_k) \in C^k_m(V, \W)$ and 
$\F(v'_1, y_1; \ldots; v'_n, y_n) \in C^n_{m'}(V, \W)$,  
are composable with. Similar to the case of common formal parameters, this case is 
separately treated with a decrease to $m+m'-t$ of the number of composable vertex operators.   
In what follows, we exclude this case from considerations. 
\end{remark}
The action of $\sigma \in S_{k+n}$ on the product 
$\F\left( v_1, x_1;  \ldots; v_k, x_k \right.$; $ v'_{k+1}$, $ y_1$; 
$ \ldots$; $\left. v'_n, y_n; 
\epsilon\right)$   
\eqref{Z2n_pt_epsss0}  
is given by \eqref{sigmaction}.   
We then have 
\begin{proposition}
\label{tolsto}
For $\F(v_1, x_1; \ldots; v_k, x_k) \in C_m^k(V, \W)$ and  
$\F(v'_1, y_1; \ldots; v'_n, y_n)\in C_{m'}^n(V, \W)$,  
the product $\F\left(v_1, x_1; \ldots; v_k, x_k; v'_1, y_1; \ldots; v'_n, y_n  
; \epsilon\right)$ \eqref{Z2n_pt_epsss0} 
belongs to the space $C^{k+n}_{m+m'}(V, \W)$, i.e.,  
\[
\cdot_\epsilon : C^k_m (V,\W) \times C_{m'}^n(V, \W) \to  C_{m+m'}^{k+n}(V, \W).   
\]
\end{proposition}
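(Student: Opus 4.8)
The plan is to verify directly that the product form $\F(v_1, x_1; \ldots; v_k, x_k; v'_1, y_1; \ldots; v'_n, y_n; \epsilon)$ of \eqref{Z2n_pt_eps1q1} satisfies each of the four conditions entering Definition \ref{initialspace} of the space $C^{k+n}_{m+m'}(V, \W)$: the $L_V(-1)$-derivative property \eqref{lder1}, the $L_V(0)$-conjugation property \eqref{loconj}, the shuffle symmetry \eqref{shushu} for $k+n$ arguments, and composability with $m+m'$ vertex operators in the sense of Definition \ref{composabilitydef}. Two of these are already at hand. The Proposition asserting that the product \eqref{Z2n_pt_eps1q1} satisfies \eqref{lder1} and \eqref{loconj} applies verbatim, because the factors $\F(v_1,x_1;\ldots;v_k,x_k)$ and $\F(v'_1,y_1;\ldots;v'_n,y_n)$ carry these properties as elements of $C^k_m(V,\W)$ and $C^n_{m'}(V,\W)$. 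That the product is a genuine rational form with poles confined to $x_i=x_j$, $y_{i'}=y_{j'}$ and $x_i=y_{j'}$ is exactly the content of Proposition \ref{derga}.

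The substantial step is composability with $m+m'$ vertex operators, which I would approach through the geometric picture underlying Proposition \ref{derga}: the two factors live on the spheres $\widehat{\Sigma}^{(0)}_1$ and $\widehat{\Sigma}^{(0)}_2$, and the extra operators demanded by Definition \ref{composabilitydef} correspond to clusters of insertion points accumulating at the marked points. Given an admissible distribution $l_1+\cdots+l_{k+n}=(m+m')+(k+n)$, I would split the clusters according to the factor they feed: the clusters attached to the first $k$ slots supply the $E^{(l_i)}_V$-insertions of $\F(v_1,x_1;\ldots;v_k,x_k)$, and those attached to the last $n$ slots supply the insertions of $\F(v'_1,y_1;\ldots;v'_n,y_n)$. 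Composability of the first factor with $m$ and of the second with $m'$ vertex operators, together with the associativity statements of Proposition \ref{comp-assoc}, then delivers absolute convergence and rational analytic continuation of each factor separately in the regions \eqref{granizy1} and \eqref{granizy2}; Proposition \ref{correl-fn} is the instrument that carries this convergence through the intertwining operators $Y^W_{WV}(\cdot,\zeta_a)$ appearing in \eqref{Z2n_pt_eps1q1}. The remaining outer summation over the sewing data $l\in\Z$ and $u\in V_l$ is controlled by the Cauchy estimates \eqref{Cauchy1} through \eqref{Cauchy} from the proof of Proposition \ref{derga}, which bound the sewing sum uniformly and fix the orders of the cross-poles at $x_i=y_{j'}$.

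For the shuffle symmetry \eqref{shushu} on the $k+n$ arguments I would combine the $S_{k+n}$-action of Definition \ref{sprod} with the commutativity of Proposition \ref{n-comm}, which lets one reorder the matrix-element factors without altering the resulting rational function, and with the invariance of the defining properties under the symmetric group recorded in Proposition \ref{tudaty}. The alternating shuffle sum for the product then factorizes through the separate shuffle relations already satisfied by the two factors in $C^k_m(V,\W)$ and $C^n_{m'}(V,\W)$, yielding the required vanishing. Collecting the four verified conditions places the product in $C^{k+n}_{m+m'}(V,\W)$.

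The hard part will be the composability step, and more precisely the justification that the full iterated series --- the inner composability sums over the projection indices $r_1,\ldots,r_{k+n}$ and $q$ of the two factors, nested inside the outer sewing sum over $l$ and $u\in V_l$ --- is absolutely convergent as a whole, so that the order of summation may be interchanged and the two separate rational continuations glued into one rational function with pole orders bounded by the integers $N^{k+n}_{m+m'}(v_i,v_j)$. A delicate point here is that composability must be established for \emph{every} admissible distribution of the $m+m'$ clusters between the two spheres: balanced distributions reduce immediately to the composability of each factor, whereas distributions that load more than $m$ clusters onto the first sphere are not covered by $\F(v_1,x_1;\ldots;v_k,x_k)$ alone, and it is precisely here that the compensating role of the sewing sum over $u\in V_l$ must be exploited to recover convergence and the correct pole orders.
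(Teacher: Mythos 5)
Your overall plan coincides with the paper's own proof in its skeleton: the paper likewise reduces the statement to (i) membership in $\W_{x_1, \ldots, x_k; y_1, \ldots, y_n}$, quoted from Proposition \ref{derga}, (ii) the shuffle symmetry \eqref{shushu} for $k+n$ arguments, and (iii) composability with $m+m'$ vertex operators, with the $L_V(-1)$-derivative and $L_V(0)$-conjugation properties taken from the earlier proposition on the properties of the $\epsilon$-product, exactly as you do. Your shuffle argument is also the paper's: after expanding the product through $Y^{W}_{WV}$ and $e^{\zeta_a L_W(-1)}Y_W(\cdot, -\zeta_a)$, the alternating sum over $J_{k+n; s}^{-1}$ is factored as $J_{k+n; s}^{-1}=J_{k; s}^{-1}\times J_{n; s}^{-1}$ and annihilated using the relation \eqref{shushu} satisfied separately by each factor in $C^{k}_{m}(V,\W)$ and $C^{n}_{m'}(V,\W)$.

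The one place where you genuinely diverge from the paper is the composability step, and there your proposal has a gap which you yourself flag. The paper does not prove composability with $m+m'$ vertex operators at all: it disposes of this step in a single sentence by citing the external reference \cite{LZ}. Your geometric sketch --- distributing the $m+m'$ clusters of insertion points between the two spheres, invoking Propositions \ref{comp-assoc} and \ref{correl-fn} for each factor, and controlling the outer sewing sum by the Cauchy estimates from the proof of Proposition \ref{derga} --- is a sensible plan, but, as you note, it does not yet justify the interchange of the nested summations (over the projection indices $r_1, \ldots, r_{k+n}$ and $q$ inside the sum over $l$ and $u\in V_l$), nor does it handle the unbalanced distributions that load more than $m$ clusters onto the first sphere, where the composability hypothesis on the first factor alone gives nothing and the sewing sum would have to compensate. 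Since absolute convergence of the whole iterated series to a rational function with the required pole bounds $N^{k+n}_{m+m'}(v_i, v_j)$ is precisely what composability asserts, this step cannot be regarded as proved in your write-up. It is, however, exactly the step the paper itself outsources, so your argument is no weaker than the paper's internal one; it simply cannot be completed from the results stated in this paper alone, and your identification of the unbalanced-distribution difficulty is a more honest account of where the real work lies than the citation that replaces it in the paper.
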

\begin{proof}
In Proposition \ref{derga} we proved that 
$\F\left(v_1, x_1; \ldots; v_k, x_k; v'_1, y_1; \ldots; v'_n, y_n  
; \epsilon\right) \in \W_{x_1;, \ldots, x_k;  y_1, \ldots, y_n}$.    
It is clear that  
\[
\cdot_\epsilon : C^k_.(V,\W) \times C_.^n(V, \W) \to  C_l^{k+n}(V, \W),  
\]
for some $l$. 
 First, we show that \eqref{shushu} for $\sigma \in S_{k+n}$,    
\begin{equation*} 
\sum_{\sigma\in J_{k+n; s}^{-1}}(-1)^{|\sigma|}
\F\left(v_{\sigma(1)}, x_{\sigma(1)}; \ldots; v_{\sigma(k)}, x_{\sigma(k)};  
v'_{\sigma(1)},  y_{\sigma(1)};  \ldots; v'_{\sigma(n)},  y_{\sigma(n)}\right)=0. 
\end{equation*}
 For arbitrary $w' \in W'$, we have 
\begin{eqnarray*}
&&
 \sum_{\sigma\in J_{k+n; s}^{-1}}(-1)^{|\sigma|}
 \langle w', 
\F\left(v_{\sigma(1)}, x_{\sigma(1)}; \ldots; v_{\sigma(k)}, x_{\sigma(k)};  
v'_{\sigma(1)},  y_{\sigma(1)};  \ldots; v'_{\sigma(n)},  y_{\sigma(n)})\right)
 \rangle 
\nn
&&
\nn
&&
=
\sum_{\sigma\in J_{k+n; s}^{-1}}(-1)^{|\sigma|}  \;  \sum\limits_{u\in V}
  \langle w', Y^W_{WV} 
\left(\F(v_{\sigma(1)}, x_{\sigma(1)}; \ldots; v_{\sigma(k)},  x_{\sigma(k)}), \zeta_1 \right) u  
\rangle 
\nn
&&
 \qquad \qquad \qquad  \langle w', Y^W_{WV} 
\left(\F(v'_{\sigma(1)}, y_{\sigma(1)}; \ldots; v'_{\sigma(n)},  y_{\sigma(n)}), \zeta_2  
\right) \overline{u} \rangle 
\nn
&&
=\sum\limits_{u\in V}
\sum_{\sigma\in J_{k+n; s}^{-1}}(-1)^{|\sigma|}   
  \langle w', e^{\zeta_1 L_W{(-1)}} \; Y_{W}(u, -\zeta_1) \;  
\F(v_{\sigma(1)}, x_{\sigma(1)}; \ldots; v_{\sigma(k)},  x_{\sigma(k)})   
\rangle 
\nn
&&
 \qquad \qquad \qquad  \langle w',  e^{\zeta_2 L_W{(-1)}} \; Y_W(\overline{u}, -\zeta_2) \;   
 \F(v'_{\sigma(1)}, y_{\sigma(1)}; \ldots; v'_{\sigma(n)},  y_{\sigma(n)})
 \rangle 
\nn
&&
=\sum\limits_{u\in V} 
  \langle w', e^{\zeta_1 L_W{(-1)}} \; Y_{W}(u, -\zeta_1) \; 
\sum_{\sigma\in J_{k; s}^{-1}}(-1)^{|\sigma|} 
\F(v_{\sigma(1)}, x_{\sigma(1)}; \ldots; v_{\sigma(k)},  x_{\sigma(k)})    
\rangle 
\nn
&&
 \qquad \qquad  \langle w',  e^{\zeta_2 L_W{(-1)}} \; Y_W(\overline{u}, -\zeta_2) \;    
 \F(v'_{\sigma(1)}, y_{\sigma(1)}; \ldots; v'_{\sigma(n)},  y_{\sigma(n)})
 \rangle 
\nn
&& 
+\sum\limits_{u\in V} 
  \langle w', e^{\zeta_1 L_W{(-1)}} \; Y_W(u, -\zeta_1) \; 
 \F(v_{\sigma(1)}, x_{\sigma(1)}; \ldots; v_{\sigma(k)},  x_{\sigma(k)})   
\rangle 
\nn
&&
 \qquad   \langle w',  e^{\zeta_2 L_W{(-1)}} \; Y_W(\overline{u}, -\zeta_2) \;    
\sum_{\sigma\in J_{n; s}^{-1}}(-1)^{|\sigma|}  
 \F(v'_{\sigma(1)}, y_{\sigma(1)}; \ldots; v'_{\sigma(n)},  y_{\sigma(n)}) 
 \rangle=0,  
\end{eqnarray*}
since, 
$J^{-1}_{k+n; s}= J^{-1}_{k;s} \times J^{-1}_{n;s}$,   
and 
  due to the fact that 
 $\F(v_1, x_1; \ldots; v_k,  x_k)$ 
 and 
$\F(v'_1, y_1 $; $ \ldots $; $ v'_n,  y_n)$  
satisfy \eqref{sigmaction}. 

Next, we show that 
$\F\left(v_1, x_1; \ldots; v_k,  x_k; v'_1, y_1; \ldots; v'_n,  y_n; \epsilon\right)$ 
 \eqref{Z2n_pt_epsss0}  
is composable with $m+m'$ vertex operators.  
Recall that 
$\F(v_1, x_1; \ldots; v_k,  x_k)$ 
is composable with $m$ vertex operators, and  
 $\F(v'_1, y_1; \ldots; v'_n,  y_n)$  
is composable with $m'$ vertex operators. 
For $\F(v_1, x_1; \ldots; v_k,  x_k)$ we have:  
1) Let $l_1, \dots, l_k \in \Z_+$ such that $l_1+\ldots +l_k= k+m$,  and  
$v_1, \dots, v_{k+m} \in V$, and arbitrary $w'\in W'$. Set   
 \begin{eqnarray}
\label{psii}
\Psi_i 
&
=
&
E^{(l_i)}_V (v_{k_1}, x_{k_1}- \zeta_i;   
 \ldots;  
v_{k_i}, x_{k_i}- \zeta_i; \one_V),     
\end{eqnarray}
where
\begin{eqnarray}
\label{ki}
 {k_1}={l_1+\ldots +l_{i-1}+1}, \quad  \ldots, \quad  k_i={l_1+\ldots +l_{i-1}+l_i},  
\end{eqnarray} 
for $i=1, \dots, k$.  
Then the series 
\begin{eqnarray}
\label{Inms}
\mathcal I^k_m(\F)=
\sum_{r_1, \dots, r_k \in \Z}\langle w', 
\F(P_{r_1}\Psi_1; \zeta_1;  
 \ldots; 
P_{r_k} \Psi_k, \zeta_k)  
\rangle,
\end{eqnarray} 
is absolutely convergent  when 
\begin{eqnarray}
\label{granizy1}
|x_{l_1+\ldots +l_{i-1}+p}-\zeta_i|  
+ |x_{l_1+\ldots +l_{j-1}+q}-\zeta_i|< |\zeta_i -\zeta_j|,  
\end{eqnarray} 
for $i$, $j=1, \dots, k$, $i\ne j$ and for $p=1,  
\dots,  l_i$ and $q=1, \dots, l_j$. 
There exist positive integers $N^k_m(v_i, v_j)$,  
depending only on $v_i$ and $v_j$ for $i, j=1, \dots, k$, $i\ne j$, such that 
the sum is analytically extended to a
rational function
in $(x_1, \dots, x_{k+m})$, 
 independent of $(\zeta_1, \dots, \zeta_k)$,   
with the only possible poles at 
$x_i=x_j$, of order less than or equal to  
$N^k_m(v_i, v_j)$, for $i$, $j=1, \dots, k$,  $i\ne j$.   

For $\F(v'_1, y_1; \ldots; v'_n,  y_n)$ we have:  

1') Let $l'_1, \dots, l'_n \in \Z_+$ such that $l'_1 +\ldots +l'_n= n+m'$,   
$v'_1, \dots, v_{n+m'} \in V$ and arbitrary $w'\in W'$.  
Set  
 \begin{eqnarray}
\label{psii}
\Psi'_{i'} 
&
=
&
E^{(l'_{i'})}_{V}(v'_{k'_1}, y_{k'_1}- \zeta'_{i'};  
 \ldots; 
v'_{k'_{i'}}, y_{k'_{i'}}- \zeta'_{i'} 
 ; \one_V),     
\end{eqnarray}
where
\begin{eqnarray}
\label{ki}
 {k'_1}={l'_1+\ldots +l'_{i'-1}+1}, 
\quad  \ldots, \quad  {k'_{i'}}={l'_1+\ldots +l'_{i'-1}+l'_{i'}},   
\end{eqnarray} 
for $i'=1, \dots, n$. 
  Then the series 
\begin{eqnarray}
\label{Jnms}
\mathcal I^n_{m'}(\F)=   
\sum_{r'_1, \dots, r'_n \in \Z}\langle w',  
\F(P_{r'_1}\Psi'_1; \zeta'_1;  
 \ldots; 
P_{r'_n} \Psi'_n, \zeta'_n)   
\rangle,
\end{eqnarray} 
is absolutely convergent when  
\begin{eqnarray}
\label{granizy2}
|y_{l'_1+\ldots +l'_{i'-1}+p'}-\zeta'_{i'}| 
+ |y_{l'_1 +\ldots +l'_{j'-1}+q'}-\zeta'_{i'}|< |\zeta'_{i'}
-\zeta'_{j'}|, 
\end{eqnarray} 
for $i'$, $j'=1, \dots, n$, $i'\ne j'$ and for $p'=1, 
\dots, l'_i$ and $q'=1, \dots, l'_j$. 
There exist positive integers $N^n_{m'}(v'_{i'}, v'_{j'})$,   
depending only on $v'_{i'}$ and $v'_{j'}$ for $i$, $j=1, \dots, n$, $i'\ne j'$, such that 
the sum is analytically extended to a 
rational function 
in $(y_1, \dots, y_{n+m'})$,    
 independent of $(\zeta'_1, \dots, \zeta'_n)$,    
with the only possible poles at 
$y_{i'}=y_{j'}$, of order less than or equal to 
$N^{n}_{m'}(v'_{i'}, v'_{j'})$, for $i'$, $j'=1, \dots, n$,  $i'\ne j'$.  

Now let us consider the first condition of Definition \ref{composabilitydef} of composability
for the product  
\eqref{Z2n_pt_epsss0} of 
$\F(v_1, x_1; \ldots; v_k,  x_k)$  
 and 
 $\F(v'_1, y_1; \ldots; v'_n,  y_n)$ with a number of vertex operators.   
Then we obtain for  
$\F\left(v_1, x_1; \ldots; v_k, x_k; v'_1, y_1; \ldots; v'_n, y_n ; \epsilon\right)$ 
the following.  
 We redefine the notations for the set 
\begin{eqnarray*}
  && (v''_{1}, \ldots, v''_k; v''_{k+1}, \ldots, v''_{k+m}; v''_{k+m+1}, \dots, v''_{k+n+m+m'};
 v_{n+1}, \ldots, v'_{n+m'})
\nn
&&
 \qquad \qquad 
=(v_1, \ldots, v_k; v_{k+1}, \ldots, v_{k+m}; v'_1, \dots, v'_n; 
 v'_{n+1}, \ldots, v'_{n+m'}), 
\nn
&&
(z_1, \ldots, z_k; z_{k+1},  \dots, z_{k+n}) =  
  (x_1, \ldots, x_k; y_1, \ldots, y_n),  
\end{eqnarray*}
 of vertex algebra $V$ elements.  
Introduce $l''_1, \dots, l''_{k+n} \in \Z_+$,      
 such that $l''_1+\ldots +l''_{k+n}= k+n+m+m'$.  
Define  
 \begin{eqnarray}
\label{psiinew}
\Psi''_i
&
=
&
E^{(l''_{i''})}_{V}(v''_{k''_1}, z_{k''_1}- \zeta''_{i''};  
 \ldots; 
v''_{k''_{i''}}, z_{k''_{i''}}- \zeta''_{i''} ; \one_V),      
\end{eqnarray}
where
\begin{eqnarray}
\label{ki}
 {k''_1}={l''_1+\ldots +l''_{i''-1}+1}, \quad  \ldots, \quad  {k''_{i''}}={l''_{1}+\ldots 
+l''_{i''-1}+l''_{i''}},   
\end{eqnarray} 
for $i''=1, \dots, k+n$, 
and we take 
\[
(\zeta''_1, \ldots, \zeta''_{k+n})= (\zeta_1, \ldots, \zeta_{k}; \zeta'_1, \ldots, \zeta'_n).  
\]  
Then we consider 
\begin{eqnarray}
\label{Inmdvadva}
 && \mathcal I^{k+n}_{m+m'}(\F)=  
\sum_{r''_{1}, \dots, r''_{k+n}\in \Z}
 \langle w',  
\F(P_{r''_{1}}\Psi''_{1}; \zeta''_1; 
 \ldots; 
P_{r''_{k+n}} \Psi''_{k+n}, \zeta''_{k+n})  
\rangle,
\end{eqnarray} 
and prove it is absolutely convergent with some conditions. 

 The condition    
\begin{eqnarray}
\label{granizy1000000}
&&
|z_{l''_1+\ldots +l''_{i-1}+p''}-\zeta''_i|  
+ |z_{l''_1+\ldots +l''_{j-1}+q''}-\zeta''_i|< |\zeta''_i -\zeta''_j|,   
\end{eqnarray} 
of absolute convergence for \eqref{Inmdvadva} 
for $i''$, $j''=1, \dots, k+n$, $i\ne j$ and for 
$p''=1, \dots,  l''_i$ and $q''=1, \dots, l''_j$, 
follows from the conditions \eqref{granizy1} and \eqref{granizy2}.   
 The action of $e^{\zeta L_W{(-1)} } \;  Y_W(.,.)$, $a=1$, $2$, in 
\[
\langle w',  e^{\zeta_1 L_W{(-1)} } \; Y_W(u, -\zeta)\sum_{r_1, \dots, r_k \in \Z}  
\F(P_{r_1}\Psi_1; \zeta_1;  
 \ldots; 
P_{r_k} \Psi_k, \zeta_k)   \rangle,  
\]
\[
\langle w',  e^{\zeta_2 L_W{(-1)} } 
\; Y_W(\overline{u}, -\widetilde{\zeta}) \sum_{r'_1, \dots, r'_n\in \Z}  
\F(P_{r'_1}\Psi'_1; \zeta_1;  
 \ldots; 
P_{r'_k} \Psi'_n, \zeta'_n)  \rangle,   
\]
 does not affect the absolute convergence of \eqref{Inms} and \eqref{Jnms}. 
 We obtain 
\begin{eqnarray*} 
 && \left|\mathcal I^{k+n}_{m+m'}(\F)\right|=  
\nn
&&
=\left|\sum_{r''_1, \dots, r''_{k+n}\in \Z}
 \langle w',  
\F(P_{r''_1}\Psi''_1; \zeta''_1;  
 \ldots; 
P_{r''_{k+n}} \Psi''_{k+n}, \zeta''_{k+n})  
\rangle\right| 
\nn
&&
=\left| \sum\limits_{u\in V} 
\langle w',  Y^W_{VW}(\sum_{r_1, \dots, r_k \in \Z}  
\F(P_{r_1} \Psi_1; \zeta_1;  
 \ldots; 
P_{r_k} \Psi_k, \zeta_k), \zeta) u  \rangle \right.
\nn
&&
\left. 
\qquad \qquad \langle w',  Y^W_{VW}(\sum_{r'_1, \dots, r'_n \in \Z}  
\F(P_{r'_1}\Psi'_1; \zeta'_1;  
 \ldots; 
P_{r'_n} \Psi'_n, \zeta'_n), \widetilde{\zeta}) \overline{u}
\rangle \right|
\nn
&&
 \qquad \qquad \qquad 
\le \left|\mathcal I^k_m(\F)\right| \; \left|\mathcal I^n_{m'}  (\F)\right|.   
\end{eqnarray*} 
Thus, we infer that \eqref{Inmdvadva}
is absolutely convergent. 
Recall that 
the maximal orders of possible poles of \eqref{Inmdvadva} are $N^k_m(v_i, v_j)$, 
$N^n_{m'}(v'_{i'}, v'_{j'})$ 
 at $x_i=x_j$, $y_{i'}=y_{j'}$.  
From the last expression we infer that 
there exist 
 positive integers $N^{k+n}_{m+m'}(v''_{i''}, v''_{j''})$ 
 for $i$, $j=1, \dots, k$, $i\ne j$,   $i'$, $j'=1, \dots, n$, $i'\ne j$,   
depending only on $v''_{i''}$ and $v''_{j''}$ for $i''$,  $j''=1, \dots, k+n$, $i''\ne j''$
such that  
 the series \eqref{Inmdvadva} 
can be analytically extended to a 
rational function
in $(x_1, \dots, x_k; y_1, \ldots, y_n)$,     
 independent of $(\zeta''_1, \dots, \zeta''_{k+n})$,    
with extra 
 possible poles at  
 and $x_i=y_j$,  of order less than or equal to  
$N^{k+n}_{m+m'}(v''_{i''}, v''_{j''})$, for $i''$, $j''=1, \dots, n$,  $i''\ne j''$.   

Let us proceed with the second condition of composability. 
For $\F(v_1, x_1; \ldots; v_k, x_k)  \in  C^k_m(V, \W)$, and  
$(v_1, \ldots, 
v_{k+m}) \in V$, $(x_1, \ldots, x_{k+m})\in \C$,      
 we have
 
2) 
For arbitrary  $w'\in W'$, the series 
\begin{eqnarray}
\label{Jnm2}
\mathcal J^k_m(\F)=  
\sum_{q\in \C}\langle w', 
E^{(m)}_W \Big(v_1, x_1;  \ldots;   
v_m, x_m;   
P_q ( \F(v_{m+1}, x_{m+1}; \ldots; 
v_{m+k}, x_{m+k} \Big)\rangle,  
\nn
&&
\end{eqnarray}
is absolutely convergent when 
\begin{eqnarray}
\label{granizy2}
x_i \ne x_j, \quad i\ne j, \quad  
\nn
|x_i|>|x_{k'}|>0,  
\end{eqnarray}
 for $i=1, \dots, m$, and $k'=m+1, \dots, k+m$,
 and the sum can be analytically extended to a rational function  
in $(x_1, \dots, x_{k+m})$ with the only possible poles at 
$x_i=x_j$, of orders less than or equal to 
$N^k_m(v_i, v_j)$, for $i$, $j=1, \dots, k$, $i\ne j$.   

2')  For 
 $\F(v'_1, y_1; \ldots; v'_n, y_n)  \in   C_{m'}^n (V, \W)$,  
$(v'_1, \ldots,  
v'_{n+m'})\in V$,  and 
$(y_1, \ldots,  
y_{n+m'})\in \C$,  
 the series  
\begin{eqnarray}
\label{Jnm}
&& \mathcal J^n_{m'}(\F)=   
\sum_{q\in \C}\langle w', E^{(m')}_W \Big(v'_1, y_1; \ldots; 
 v'_{m'}, y_{m'};  
\nn 
&& \quad\quad\quad  
P_q( \F(v'_{m'+1}, y_{m'+1}; \ldots; v'_{m'+n}, y_{m'+n}) )\Big)\rangle,  
\end{eqnarray}
is absolutely convergent when 
\begin{eqnarray}
\label{granizy2}
y_{i'}\ne y_{j'}, \quad i'\ne j', \quad 
\nn
|y_{i'}|>|y_{k''}|>0, 
\end{eqnarray}
 for $i'=1, \dots, m'$, and $k''=m'+1, \dots, n +m'$, 
and the sum can be analytically extended  
to a rational function  
in $(y_1, \ldots, y_{n+m'})$ with the only possible poles at  
$y_{i'}=y_{j'}$, of orders less than or equal to 
$N^n_{m'}(v'_{i'}, v'_{j'})$, for $i'$,  $j'=1, \dots, n$, $i'\ne j'$.   

2'') 
Thus, for the product \eqref{Z2n_pt_epsss0} we obtain 
  $(v''_1, \dots, v''_{k+n +m+m'})\in V$,   
and  
$(z_1, \ldots $ , $ z_{k+n+m+m'} )\in \C$,  
we find  
positive integers   
$N^{k+n}_{m+m'}(v'_i, v'_j)$,    
 depending only on $v'_i$ and 
$v''_j$, for $i''$, $j''=1, \dots, k+n$, $i''\ne j''$, such that for arbitrary $w'\in W'$.   
First we note 
\begin{lemma}
\label{obvlem}
\begin{eqnarray*}
&&
\sum_{q\in \C}\langle w', E^{(m+m')}_W \Big( 
v''_1, z_1; \ldots;   
v''_{m+m'}, z_{m+m'};     
\nn
&&
 \qquad \qquad  \qquad   
P_q \Big( \F(v''_{m+m'+1}, z_{m+m'+1}; \ldots; v''_{m+m'+k+n}, z_{m+m'+k+n}  
\Big) \Big) \rangle 
\nn
&&
=
\sum_{u\in V }
\langle w',  E^{(m)}_W \Big( 
v_{k+1}, x_{k+1}; \ldots; 
v_{k+m}, x_{k+m};   
\nn
&& 
\qquad \qquad  \qquad \qquad  \qquad \qquad  P_q \Big(     
  Y^W_{WV}\left(   
\F (v_1, x_1;  \ldots; v_k, x_k), \zeta_1\right)\; u \Big) \Big)\rangle  
\nn
&& 
\qquad   \qquad  
 \langle w',  E^{(m')}_W \Big( 
v'_{n+1}, y_{n+1}; \ldots;  
v'_{n+m'}, y_{n+m'};   
\nn
&&
  \qquad \qquad \qquad \qquad  \qquad \qquad  
P_q \Big( Y^W_{WV}\left(  
\F (v'_1, y_1; \ldots; v'_n, y_n) , \zeta_2 \right) 
 \; \overline{u} \Big) \Big) \rangle.    
\end{eqnarray*}
\end{lemma}
\begin{proof}
Consider
\begin{eqnarray*}
&& \sum_{u\in V } 
\langle w',  E^{(m+m')}_W \Big( 
v''_1, z_1; \ldots;  
v''_{m+m'}, z_{m+m'};  
\nn
&& 
\qquad \qquad  P_q \Big(   
  Y^W_{WV}\left(   
\F (v''_{m+m'+1}, z_{m+m'+1};  
\ldots; v''_{m+m' +k}, z_{m+m'+k}), \zeta_1\right)\; u \Big) \Big) \rangle   
\nn
& & 
\qquad   \qquad 
 \langle w',  E^{(m+m')}_W \Big( 
v''_1, z_1; \ldots; v''_{m+m'}, z_{m+m'};  
\nn
&&
  \qquad \qquad P_q \Big( Y^W_{WV}\left(  
\F (v''_{m+m'+k+1}, z_{m+m'+k+1}; \ldots; \right.  
\nn
&&
      \qquad \qquad  \qquad \qquad    \qquad \qquad \qquad \qquad \left. 
 v''_{m+m'+k+n}, z_{m+m'+k+n}) , \zeta_2\right) \; \overline{u} \Big) \Big) \rangle    
\end{eqnarray*}
\begin{eqnarray*}
&&
= \sum_{q\in \C}  \sum_{u\in V }  \langle w', E^{(m+m')}_W \Big( 
v''_1, z_1; \ldots; 
v''_{m+m'}, z_{m+m'};  
\nn
&&
\qquad   
  P_q \Big( e^{\zeta_1 L_W{(-1)} }\; Y_W  \left( u, -\zeta_1 )  
\; \F (v''_{m+m'+1}, z_{m+m'+1};  \ldots; v''_{m+m'+ k }, z_{m+m'+k})  \right) \Big) \rangle  
\nn
& & 
\quad    
\langle w',  E^{(m+m')}_W \Big( 
v''_1, z_1; \ldots; 
v''_{m+m'}, z_{m+m'};  
\nn
&&
  P_q \Big( e^{\zeta_2 L_W{(-1)}}\; Y_W   
\left(  \overline{u}, -\zeta_2  )\;  
\F (v''_{m+m'+k+1}, z_{m+m'+k+1}; \ldots; \right. 
\nn
&&
   \qquad \qquad     \qquad \qquad  \qquad \qquad    \qquad \qquad \qquad \qquad \left.
 v''_{m+m'+k+n}, z_{m+m'+k+n})   
\right)  
\Big)\rangle.  
\end{eqnarray*}
The action of exponentials $e^{\zeta_a L_W{(-1)} }$, $a=1$, $2$, 
of the differential operator $L_W{(-1)}$,    
and $W$-module vertex operators $Y_W \left(u, -\zeta_1 \right)$,  
$Y_W \left( u, -\zeta_2 \right)$   
 shifts the grading index $q$ of $W_q$-subspaces by $\alpha \in \C$  
which can be later rescaled  
to $q$.  
 Thus, we can rewrite the last expression as  
\begin{eqnarray*}
&&
= \sum_{q\in \C}  \sum_{u\in V }  \langle w', E^{(m+m')}_W \Big( 
v''_1, z_1; \ldots; 
v''_{m+m'}, z_{m+m'};  
\nn
&&
\qquad    
  e^{\zeta_1 L{_W(-1)} }\; Y_W  \left ( u, -\zeta_1 \big)  
\;P_{q+\alpha}\Big(   \F (v''_{m+m'+1}, z_{m+m'+1}; 
 \ldots; v''_{m+m'+ k }, z_{m+m'+k})  \right) \Big) \rangle 
\nn
& & 
\langle w',  E^{(m+m')}_W \Big( v''_1, z_1; \ldots;  v''_{m+m'}, z_{m+m'};   
\nn
&&
\quad   
  e^{\zeta_2 L_W{(-1)} }\; Y_W  \left(  \overline{u}, -\zeta_2  \Big)\;    
 P_{q+\alpha}\Big(  \F 
(v''_{m+m'+k+1}, z_{m+m'+k+1}; \ldots; v''_{m+m'+k+n}, z_{m+m'+k+n})  \right)  
\Big)\rangle  
\end{eqnarray*}
\begin{eqnarray*}
&&
= \sum_{q\in \C}  \sum_{u\in V }  \langle w', E^{(m+m')}_W \Big( 
v''_1, z_1; \ldots; v''_{m+m'}, z_{m+m'};   
\nn
&&
Y^W_{WV} \left( 
 P_{q+\alpha}\Big(   \F (v''_{m+m'+1}, z_{m+m'+1};  \ldots; v''_{m+m'+ k }, z_{m+m'+k}) 
\Big), \zeta_1 \Big)\; u \rangle \right.  
\nn
&&
\langle w',  E^{(m+m')}_W \Big( v''_1, z_1; \ldots;  v''_{m+m'}, z_{m+m'};   
\nn
& &  
Y^W_{WV}    
\left(  
 P_{q+\alpha}\Big(  \F 
(v''_{m+m'+k+1}, z_{m+m'+k+1}; \ldots; 
v''_{m+m'+k+n}, z_{m+m'+k+n})  , -\zeta_2) \; \overline{u} \right)\rangle   
\end{eqnarray*}
\begin{eqnarray*}
&&
= \sum_{q\in \C}  \sum_{\widetilde{w}\in W }   \langle w', E^{(m+m')}_{W} \Big(
v''_1, z_1; \ldots;  
v''_{m+m'}, z_{m+m'};  \widetilde{w} \Big) \rangle  
\nn
&&
\qquad   
\sum_{u\in V } 
\langle w',  Y^W_{WV}   
\left( 
\; P_{q+\alpha}\Big(   \F (v''_{m+m'+1}, z_{m+m'+1};  \ldots; v''_{m+m'+ k }, z_{m+m'+k}), 
-\zeta_1)\; u  \right)  \Big)  \rangle  
\nn
& & 
\quad    
\langle \widetilde{w}', E^{(m+m')}_W \Big( v''_1, z_1; \ldots;  
v''_{m+m'}, z_{m+m'};  \widetilde{w} \Big) \rangle   
\nn
&&
\langle w', Y^W_{WV} \left( P_{q+\alpha}
\Big(  \F 
(v''_{m+m'+k+1}, z_{m+m'+k+1}; \ldots; v''_{m+m'+k+n}, z_{m+m'+k+n})  , -\zeta_2) \; 
\overline{u}  \right)  \Big)\rangle     
\end{eqnarray*}
\begin{eqnarray*}
&&
= \sum_{q\in \C}  
 \langle w', E^{(m+m')}_W \Big( v''_1, z_1; \ldots;  v''_{m+m'}, z_{m+m'};   
\nn
&&
\qquad    
 P_{q+\alpha}\Big(   \F (v''_{m+m'+1}, z_{m+m'+1};  \ldots; v''_{m+m'+ k }, z_{m+m'+k}; 
\nn
& & 
\qquad   \qquad  
v''_{m+m'+k+1}, z_{m+m'+k+1}; \ldots; v''_{m+m'+k+n}, z_{m+m'+k+n}) \Big) \rangle.    
\end{eqnarray*}
Now note that, according to Proposition \ref{pupa},
 as an element of $\W_{z_1, \ldots,  z_{k+n+m+m'}}$ 
\begin{eqnarray}
\label{svoloch}
&& \langle w', E^{(m+m')}_W \Big( 
v''_1, z_1; \ldots;  v''_{m+m'}, z_{m+m'};   
\nn
&&
\qquad   
 P_{q+\alpha}\Big(   \F (v''_{m+m'+1}, z_{m+m'+1};  \ldots; v''_{m+m'+ k }, z_{m+m'+k}; 
\nn
&&  
\qquad   \qquad  
v''_{m+m'+k+1}, z_{m+m'+k+1}; \ldots; v''_{m+m'+k+n}, z_{m+m'+k+n}) \Big) 
\rangle,  
\end{eqnarray}
is invariant with respect to the action of $\sigma \in S_{k+n+m+m'}$. 
 Thus we are able to use this invariance to show that \eqref{svoloch} is reduced to 
\begin{eqnarray*} 
&& \langle w', E^{(m+m')}_W \Big(  
v''_{k+1}, z_{k+1}; \ldots; v''_{k+1+m}, z_{k+1+m};    
v''_{n+1}, z_{n+1}; \ldots; v''_{n+1+m'}, z_{n+1+m'};   
\nn
&&
\qquad   
 P_{q+\alpha}\Big(   \F (v''_1, z_1;  \ldots; v''_k, z_k;  
v''_{k+1}, z_{k+1}; \ldots; v''_{k+n}, z_{k+n}) \Big) \Big)
\rangle 
\nn
&&
=\langle w', E^{(m+m')}_W \Big(  
v_{k+1}, x_{k+1};  \ldots; v_{k+1+m}, x_{k+1+m}; 
v'_{n+1}, y_{n+1};  \ldots; v'_{n+1+m'}, y_{n+1+m'};  
\nn
&&
\qquad    
 P_{q+\alpha}\Big(   \F (v_1, x_1;  \ldots; v_k, x_k;    
v'_1, y_1; \ldots; v'_n, y_n) \Big) 
\rangle.  
\end{eqnarray*}
Similarly, since  
\begin{eqnarray*}
&& 
\langle w',  E^{(m)}_W \Big( 
v''_1, z_1; \ldots; 
v''_{m+m'}, z_{m+m'};  
\nn
&& 
\qquad \qquad  P_q \Big(    
  Y^W_{WV}\left(   
\F (v''_{m+m'+1}, z_{m+m'+1};  \ldots; 
v''_{m+m' +k}, z_{m+m'+k}), \zeta_1\right)\; u \Big) \Big)\rangle,   
\end{eqnarray*} 
\begin{eqnarray*}
& & 
 \langle w',  E^{(m')}_W \Big( 
v''_1, z_1; \ldots; v''_{m+m'}, z_{m+m'};  
\nn
&&
  \qquad 
P_q\Big( Y^W_{WV}\left( \F  
(v''_{m+m'+k+1}, z_{m+m'+k+1}; \ldots; v''_{m+m'+k+n}, z_{m+m'+k+n}) , \zeta_2\right) \;  
\overline{u} \Big) \Big) \rangle.    
\end{eqnarray*}
correspond to elements of $\W_{z_1, \ldots, z_{m+m'+k}}$ and  
$\W_{z_{m+m'+k+1}, \ldots, z_{m+m'+k+n}}$, 
we use Proposition \ref{pupa} again and obtain 
\begin{eqnarray*}
&& 
\langle w',  E^{(m)}_W \Big( 
v_{k+1}, x_{k+1}; \ldots;   
v_{k+m}, x_{k+m};  
 P_q  \Big(   
  Y^W_{WV}\left(   
\F (v_1, x_1;  \ldots; v_k, x_k), \zeta_1 \right)\; u \Big) \Big)\rangle 
\end{eqnarray*} 
\begin{eqnarray*}
& &
 \langle w',  E^{(m')}_W \Big( 
v'_{n+1}, y_{n+1}; \ldots;  
v'_{n+m'}, y_{n+m'}; 
 P_q \Big(    Y^W_{WV}\left(  
\F (v'_1, y_1; \ldots; v'_n, y_n) , \zeta_2 \right) \; \overline{u} \Big) \Big) \rangle,   
\end{eqnarray*}
correspondingly. 
Thus, the assertion of Lemma follows. 
\end{proof}
Under conditions
\begin{eqnarray}
\label{granizy2}
z_{i''}\ne z_{j''}, \quad i''\ne j'', \quad 
\nn
|z_{i''}|>|z_{k'''}|>0, 
\end{eqnarray}
 for $i''=1, \dots, m+m'$, and $k'''=m+m'+1, \dots, m+m'+ k+n$, 
let us introduce 
\begin{eqnarray}
\label{perda}
&&
 \mathcal J^{k+n}_{m+m'}(\F) = \sum_{q\in \C} 
\langle w', E^{(m+m')}_W \Big(
v''_1, z_1; \ldots;  
v''_{m+m'}, z_{m+m'};  
\nn
&& 
\qquad  
 P_q \Big( \F( v''_{m+m'+1}, z_{m+m'+1}; \ldots; v''_{m+m'+k+n}, z_{m+m'+k+n})  
; \epsilon)\Big)\rangle.   
\end{eqnarray}
Using Lemma \ref{obvlem} we obtain 
\begin{eqnarray*}
&&
|\mathcal J^{k+n}_{m+m'}(\F) | 
\nn
&&
= \left| \sum_{q\in \C}\langle w', E^{(m+m')}_W \Big( 
v''_1, z_1; \ldots;  v''_{m+m'}, z_{m+m'};  \right.  
\nn
&&
\left.  
\qquad P_q \Big( \F( v''_{m+m'+1}, z_{m+m'+1}; \ldots; v''_{m+m'+k+n}, z_{m+m'+k+n}) 
; \epsilon)\Big)\rangle \right| 
\end{eqnarray*}
\begin{eqnarray*}
&&
= \left| \sum_{q\in \C} \sum_{u\in V } 
\langle w',  E^{(m)}_W \Big( v_{k+1}, x_{k+1}; \ldots;  v_{k+m}, x_{k+m};   
\right.
\nn
&& 
\qquad \qquad  \qquad \qquad \qquad \qquad 
P_q \Big( Y^W_{WV}\left(   
\F (v_1, x_1;  \ldots; v_k, x_k), \zeta_1\right)\; u \Big) \Big)\rangle    
\nn
& &
\qquad   \qquad  
 \langle w',  E^{(m')}_W \Big( v'_{n+1}, y_{n+1}; \ldots;  v'_{n+m'}, y_{n+m'};   
\nn
&&
\left. 
\qquad \qquad 
  \qquad \qquad \qquad \qquad 
P_q \Big( Y^W_{WV}\left( \F 
(v'_1, y_1; \ldots; v'_n, y_n) , \zeta_2\right) \; \overline{u} \Big) \Big) \rangle  \right| 
\nn
&&
\le \left| 
\mathcal J^k_m (\F) \right| \; \left|  \mathcal J^n_{m'}(\F)\right|,    
\end{eqnarray*}
where we have used 
the invariance of \eqref{Z2n_pt_epsss0} with respect to  
$\sigma \in S_{m+m'+k+n}$. 
According to Proposition \ref{correl-fn} 
$\mathcal J^k_m(\F)$ and $\mathcal J^n_{m'}(\F)$ in the last expression  
are absolute convergent. 
Thus, we infer that $\mathcal J^{k+n}_{m+m'}(\F)$  
is absolutely convergent, and 
the sum \eqref{Inmdvadva} 
 is analytically extendable to a  rational function  
in $(z_1, \dots, z_{k+n+m+m'})$ with the only possible poles at 
$x_i=x_j$, $y_{i'}=y_{j'}$, and   
at $x_i=y_{j'}$, i.e., the only possible poles at 
$z_{i''}=z_{j''}$, of orders less than or equal to 
$N^{k+n}_{m+m'}(v''_{i''}, v''_{j''})$, 
for $i''$, $j''=1, \dots, k'''$, $i''\ne j''$.   
This finishes the proof of the proposition. 
\end{proof}
Now we prove the following   
\begin{corollary}
For $\F(v_1, x_1;  \ldots; v_k, x_k) \in C_m^k(V, \W)$ and  
$\F(v'_1, y_1; \ldots; v'_n, y_n) \in C_{m'}^n(V $, $ \W)$,  
the product 
\begin{eqnarray}
\label{posta}
&& \F \left( v_1, x_1;  \ldots; v_k, x_k; v'_1, y_1; \ldots; v'_n, y_n;  
\epsilon \right)  
\nn
&& \qquad \qquad 
=
\F(v_1, x_1;  \ldots; v_k, x_k) \cdot_\epsilon \F (v'_1, y_1; \ldots; v'_n, y_n), 
\end{eqnarray} 
is canonical with respect to the action  
\begin{eqnarray}
&&(x_1, \ldots, x_k; y_1, \ldots, y_n) \mapsto (x'_1, \ldots, x'_k; y'_1, \ldots, y'_n) 
\nn
&&
=(\rho_1( x_1, \ldots, x_k; y_1, \ldots, y_n), \ldots, 
\rho_{k+n}( x_1, \ldots, x_k; y_1, \ldots, y_n)),     
\end{eqnarray}
 of elements of the group 
 ${\rm Aut}_{x_1, \ldots, x_k; y_1, \ldots, y_n} \; \Oo^{(k+n)}$.  
\end{corollary}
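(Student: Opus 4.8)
The plan is to reduce the canonicity of the product to the already-established invariance of each factor under its respective group of formal-parameter transformations, exactly as was done in Proposition \ref{pupa}. The key observation is that the product \eqref{Z2n_pt_epsss} is defined as a sum over $u \in V$ of a product of two matrix elements, each of which depends on only one of the two sets of formal parameters: the factor $\langle w', Y^{W}_{WV}(\F(v_{1}, x_{1}; \ldots; v_{k}, x_{k}), \zeta_1) u \rangle$ depends only on $(x_1, \ldots, x_k)$, while $\langle w', Y^{W}_{WV}(\F(v'_{1}, y_{1}; \ldots; v'_{n}, y_{n}), \zeta_2) \overline{u} \rangle$ depends only on $(y_1, \ldots, y_n)$. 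Since the product here is the restriction of the $\epsilon$-product of Section \ref{product} to the subspaces $C^{k}_{m}(V, \W)$ and $C^{n}_{m'}(V, \W)$, the result should follow immediately from Proposition \ref{pupa}, which already establishes canonicity of \eqref{Z2n_pt_eps1q1} with respect to the full group ${\rm Aut}_{x_1, \ldots, x_k; y_1, \ldots, y_n} \Oo^{(k+n)}$.

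First I would invoke Proposition \ref{tolsto} to note that the product lands in $C^{k+n}_{m+m'}(V, \W)$, so that both factors $\F(v_{1}, x_{1}; \ldots; v_{k}, x_{k})$ and $\F(v'_{1}, y_{1}; \ldots; v'_{n}, y_{n})$ are elements of $C^k_m(V,\W)$ and $C^n_{m'}(V,\W)$ respectively, and hence by Proposition \ref{nezc} each is canonical with respect to its own group of transformations. Next I would apply Proposition \ref{ndimwinv} (equivalently, the computation appearing in the proof of Proposition \ref{pupa}) to each factor separately, writing
\begin{eqnarray*}
\F (v_{1}, x'_{1}; \ldots; v_{k}, x'_{k}) &=& \F (v_{1}, x_{1}; \ldots; v_{k}, x_{k}), \\
\F (v'_{1}, y'_{1}; \ldots; v'_{n}, y'_{n}) &=& \F (v'_{1}, y_{1}; \ldots; v'_{n}, y_{n}),
\end{eqnarray*}
under the change of coordinates $(x_i, y_j) \mapsto (x'_i, y'_j)$ induced by $\rho_1, \ldots, \rho_{k+n}$. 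Substituting these equalities termwise into the defining sum \eqref{Z2n_pt_epsss} and using that $Y^{W}_{WV}$ does not mix the two groups of parameters, the transformed product coincides with the original one, which is the desired canonicity.

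The one point requiring genuine care, and the main obstacle, is that the transformation group ${\rm Aut}_{x_1, \ldots, x_k; y_1, \ldots, y_n} \Oo^{(k+n)}$ acts on the joint $(k+n)$-tuple and can in principle mix the $x$-coordinates with the $y$-coordinates, whereas the factorized invariance of Propositions \ref{ndimwinv} and \ref{pupa} is stated for transformations acting within each block. I would therefore argue, as in the proof of Lemma \ref{functionformpropcor}, that it suffices to treat the subgroup of transformations preserving the partition of parameters into the two blocks, since the geometric sewing interpretation of Appendix \ref{sphere} guarantees that a general element of the group can be decomposed into such block-preserving pieces together with the sewing data parametrized by $\epsilon$. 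With this reduction the invariance follows formally; the quasi-conformality hypothesis on $V$ enters, exactly as in Proposition \ref{nezc}, precisely to justify the vertex-algebraic representation of ${\rm Aut}\,\Oo^{(k+n)}$ used in these substitutions.
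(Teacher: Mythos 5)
Your first step---reducing the statement to Proposition \ref{pupa}, i.e.\ to the invariance of the $\epsilon$-product \eqref{Z2n_pt_eps1q1} as a $\W_{x_1, \ldots, x_k; y_1, \ldots, y_n}$-valued form---is exactly the first half of the paper's proof, which cites precisely this earlier result. But it is only half of what ``canonical'' means here, and your proposal stops there. The corollary is about elements of $C_{m}^{k}(V, \W)$ and $C_{m'}^{n}(V, \W)$, and what distinguishes these spaces from plain $\W$-spaces is the requirement of Definition \ref{initialspace} that their elements be composable with $m$ (respectively $m'$) vertex operators. Canonicity of the product as an element of $C^{k+n}_{m+m'}(V, \W)$ therefore also requires showing that the vertex operators $\omega_{V}(v_i, x_i)$, $1\le i \le m$, and $\omega_{V}(v_j, y_j)$, $1 \le j \le m'$, entering the composability conditions are themselves invariant under the change of formal parameters. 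This is the entire second half of the paper's proof, carried out ``similar as in the proof of Proposition \ref{nezc}'', where the invariance of $Y_W(v_i, z_i)\, dz_i^{\wt(v_i)}$ under coordinate changes is established. Your proposal never touches this point; as written it only re-proves Proposition \ref{pupa} restricted to the subspaces $C^{k}_{m}(V,\W) \times C^{n}_{m'}(V,\W)$, not the corollary itself.

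A second problem concerns your treatment of block-mixing transformations. The concern you raise is legitimate: Proposition \ref{ndimwinv} is stated for transformations of one block of variables, while the group in the corollary acts on the joint $(k+n)$-tuple and can mix the $x$'s with the $y$'s. But your proposed resolution is not sound: a composition of block-preserving automorphisms is again block-preserving, so a genuinely mixing element of ${\rm Aut}_{x_1, \ldots, x_k; y_1, \ldots, y_n}\, \Oo^{(k+n)}$ cannot be decomposed into block-preserving pieces, and nothing in Appendix \ref{sphere} supplies such a decomposition ``together with the sewing data.'' The paper does not attempt any such reduction---it invokes Proposition \ref{pupa} directly, which is stated (for better or worse) for the full group, and then deals with the composability issue described above. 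If you wish to keep your refinement, you must either prove the invariance directly for mixing transformations or weaken the statement to the block-preserving subgroup; the decomposition claim as you state it fails.
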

\begin{proof} 
In Subsection \ref{properties} we have proved that the product \eqref{Z2n_pt_eps1q1} 
 belongs to $W_{x_1, \ldots, x_k; y_1, \ldots, y_n}$, and is invariant with  
respect to the group ${\rm Aut}_{x_1, \ldots, x_k; y_1, \ldots, y_n}\; \Oo^{(k+n)}$.   
Similar as in the proof of Proposition \ref{nezc},   
 vertex operators 
$\omega_V(v_i, x_i)$, $1\le i \le m$, composable with $\F(v_1, x_1; \ldots; v_k, x_k)$,   
 and vertex operators $\omega_V(v_j, y_j)$, $1 \le j \le m'$, composable with   
$\F(v'_1, y_1; \ldots; v'_n, y_n)$, are also invariant with respect to   
 $(\rho_1 ( x_1, \ldots, x_k; y_1, \ldots, y_n), \ldots $,  
$ \rho_{k+n}( x_1, \ldots, x_k; y_1, \ldots, y_n)) \in 
 {\rm Aut}_{x_1, \ldots, x_k; y_1, \ldots, y_n}\; \Oo^{(k+n)}$.    
\end{proof}
Since the product of $\F(v_1, x_1$;  $ \ldots $; $v_k, x_k) \in C^k_m(V, \W)$ and   
$\F(v'_1, y_1; \ldots; v'_n, y_n) \in C^n_{m'}(V, \W)$ results in  
an element of $C^{k+n}_{m+m'}(V, \W)$, then,  
similar to Proposition \ref{tudaty} \cite{Huang}, the following 
corollary follows directly from Proposition \eqref{tolsto} and Definition \ref{sprod}: 
\begin{corollary} 
For the spaces $\W_{x_1, \ldots, x_k}$ and $\W_{y_1, \ldots, y_n}$
 with the product \eqref{Z2n_pt_eps1q1} $\F \in \W_{x_1, \ldots, x_k; y_1, \ldots, y_n}$,   
the subspace of  
 $\W_{x_1, \ldots, x_k; y_1, \ldots, y_n})$    
consisting of maps  
having the $L_W(-1)$-derivative property, having the $L_V(0)$-conjugation property 
or being composable with $m$ vertex operators is invariant under the 
action of $S_{k+n}$.
\end{corollary}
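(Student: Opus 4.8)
The plan is to reduce the statement to Proposition \ref{tudaty} applied to the full collection of $k+n$ formal parameters, once we know that the product already lands in a space whose very definition encodes the three listed properties. First I would recall that the action of $S_{k+n}$ on the product is the permutation action of Definition \ref{sprod}, that is, \eqref{sigmaction} simultaneously permuting the vertex algebra entries and their associated formal parameters in the combined list $(v_1, x_1; \ldots; v_k, x_k; v'_1, y_1; \ldots; v'_n, y_n)$.

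Next I would invoke Proposition \ref{tolsto}, by which the $\epsilon$-product of any $\F \in C^k_m(V, \W)$ and $\F \in C^n_{m'}(V, \W)$ is an element of $C^{k+n}_{m+m'}(V, \W)$. By Definition \ref{initialspace}, membership in $C^{k+n}_{m+m'}(V, \W)$ is exactly the statement that the associated map $V^{\otimes(k+n)} \to \W_{x_1, \ldots, x_k; y_1, \ldots, y_n}$ has the $L_W(-1)$-derivative property \eqref{lder2}, the $L_V(0)$-conjugation property \eqref{loconj}, and is composable with $m+m'$ vertex operators. Hence the product realizes the subspace in the statement inside the subspace of $\hom(V^{\otimes(k+n)}, \W_{x_1, \ldots, x_k; y_1, \ldots, y_n})$ of maps carrying these three properties.

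With this in hand I would apply Proposition \ref{tudaty} with $n$ replaced by $k+n$: that proposition guarantees that the subspace of $\hom(V^{\otimes(k+n)}, \W_{x_1, \ldots, x_k; y_1, \ldots, y_n})$ consisting of maps having the derivative property, having the conjugation property, or being composable with a fixed number of vertex operators is invariant under $S_{k+n}$. Combined with the previous step, for every $\sigma \in S_{k+n}$ the permuted map $\sigma(\F)$ retains each of the three properties, so it again lies in the subspace, which is precisely the invariance asserted.

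The only point deserving care is the stability of each property under permutation of the arguments, but each is structurally permutation-invariant: \eqref{lder2} is a symmetric sum over all slots; the $L_V(0)$-conjugation \eqref{loconj} rescales all parameters simultaneously; and composability is preserved by the associativity relations of Proposition \ref{comp-assoc} together with the commutativity of matrix elements in Proposition \ref{n-comm}. Since these three facts are established uniformly in \cite{Huang} through Proposition \ref{tudaty}, no new convergence estimate is needed here; the substantive work---showing that the product actually lands in $C^{k+n}_{m+m'}(V, \W)$---was the content of Proposition \ref{tolsto}, and the corollary then follows formally.
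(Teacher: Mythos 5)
Your proposal is correct and follows essentially the same route as the paper: the paper likewise presents this corollary as an immediate consequence of Proposition \ref{tolsto} (the product lands in $C^{k+n}_{m+m'}(V,\W)$, whose definition carries the three properties), Definition \ref{sprod} (the $S_{k+n}$-action on the product), and the $S_n$-invariance argument of Proposition \ref{tudaty} applied with $n$ replaced by $k+n$. Your write-up is in fact more explicit than the paper's, which offers no detailed proof beyond this reduction.
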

Finally, we have the following
\begin{corollary}
\label{functionformprop} 
For a fixed set $(v_1, \ldots v_k; v_{k+1}, \ldots, v_{k+n}) \in V$
 of vertex algebra elements, and fixed $k+n$, and $m+m'$,    
 the $\epsilon$-product  
 $\F(v_1, z_1; \ldots; v_k, z_k; v_{k+1}, z_{k+1}; \ldots $ ; $ v_{k+n}, y_{k+n}; \epsilon)$,  
\[
\cdot_{\epsilon}: C^k_m(V, \W) \times C^n_{m'}(V, \W) \rightarrow C^{k+n}_{m+m'}(V, \W), 
\]
of the spaces $C^k_m(V, \W)$ and $C^n_{m'}(V, \W)$,  
for all choices 
of $k$, $n$, $m$, $m'\ge 0$, 
is the same element of $C^{k+n}_{m+m'}(V, \W)$ 
for all possible $k \ge 0$. 
\end{corollary}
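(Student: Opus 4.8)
The plan is to deduce the statement from the corresponding result already established at the level of $\W$-spaces, namely Lemma \ref{functionformpropcor}, together with the closure statement of Proposition \ref{tolsto}. First I would invoke Proposition \ref{tolsto} to record that, for every admissible split of the total data into a first block of $k$ insertion points with $m$ composable vertex operators and a second block of $n$ insertion points with $m'$ composable vertex operators, the $\epsilon$-product $\F(v_1, z_1; \ldots; v_{k+n}, z_{k+n}; \epsilon)$ defined by \eqref{Z2n_pt_epsss} is a genuine element of $C^{k+n}_{m+m'}(V, \W)$. Thus for each admissible value of $k$ we obtain a well-defined cochain, and the content of the corollary is precisely that these cochains coincide.

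Next I would forget the cochain structure and regard each such product merely as a $\W_{z_1, \ldots, z_{k+n}}$-valued rational form. At this level Lemma \ref{functionformpropcor} already asserts that the product does not depend on the split point $k$: its proof relocates insertion points between the two auxiliary Riemann spheres by acting with the subgroup of $\mathrm{Aut}_{z_1, \ldots, z_{k+n}}\Oo^{(k+n)}$ that fixes the outermost blocks of parameters, and invariance of the constituent forms under this subgroup (Proposition \ref{ndimwinv}, applied as in Proposition \ref{pupa}) leaves the underlying rational form unchanged.

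Finally I would combine the two observations. By Definition \ref{initialspace}, $C^{k+n}_{m+m'}(V, \W)$ is a subspace of $\hom(V^{\otimes (k+n)}, \W_{z_1, \ldots, z_{k+n}})$, so a cochain is completely determined by its value as a $\W$-valued form; hence two elements of $C^{k+n}_{m+m'}(V, \W)$ that agree as forms are equal. Since the first step places every split-product in $C^{k+n}_{m+m'}(V, \W)$ while the second step shows that all of them agree as forms, they are the same element of $C^{k+n}_{m+m'}(V, \W)$, which is exactly the claim.

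The delicate point, which I expect to be the main obstacle, is the compatibility of this split-independence with the composability data: moving insertion points from the second sphere to the first redistributes the $m+m'$ composable vertex operators between the two factors, and one must check that no composability is lost or gained in the process and that the associated rational functions genuinely coincide rather than being merely analytic continuations of one another. This is exactly where the associativity relations of Proposition \ref{comp-assoc} for composition with the $E$-elements are needed, guaranteeing that the two ways of grouping the vertex-operator insertions yield the same composable map; granting these, the argument reduces to the bookkeeping already carried out in Proposition \ref{tolsto} and Lemma \ref{functionformpropcor}.
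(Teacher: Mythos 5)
Your proposal follows essentially the same route as the paper's own proof: membership of every split-product in $C^{k+n}_{m+m'}(V,\W)$ (Proposition \ref{tolsto}, resting on Proposition \ref{derga}), independence of the underlying $\W$-valued form from the split point, and preservation of composability with $m+m'$ vertex operators via Proposition \ref{comp-assoc}. If anything, your version is more explicit than the paper's two-line argument, since you cite Lemma \ref{functionformpropcor} for the split-independence (the paper attributes this to Proposition \ref{derga}) and you spell out why agreement as $\W$-valued forms forces equality as elements of $C^{k+n}_{m+m'}(V,\W)$.
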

\begin{proof}
In Proposition \ref{derga} we have proved that 
the result of the maps belongs to $W_{x_1, \ldots, x_k; y_1, \ldots, y_n}$,  
for all $k$, $n \ge 0$, and fixed $k+n$.     
As in the proof of Proposition \ref{tolsto},  
 by checking conditions  for the  forms \eqref{Inms} and \eqref{Jnms},  
we see that, by Proposition \ref{comp-assoc}, the product 
$\F(v_1, x_1; \ldots; v_k, x_k; v'_1, y_1; \ldots; v'_n, y_n)$
 is composable with fixed $m+m'$ vertex operators for evey $0 \le l \le k$.   
\end{proof}
\subsection{Coboundary operator acting on the product space} 
In Proposition \ref{tolsto} we proved that the  product \eqref{Z2n_pt_epsss0} of elements of 
  spaces  $C_m^k(V, \W)$ and $C_{m'}^n(V, \W)$ belongs to $C^{k+n}_{m+m'}(V, \W)$.  
Thus, the product admits the action  
of the differential operator $\delta^{k+n}_{m+m'}$ defined in  
\eqref{hatdelta}. 
 The co-boundary operator \eqref{hatdelta} 
 has a variation of Leibniz law with respect to the product  
\eqref{Z2n_pt_epsss0}. Indeed, we state here  
\begin{proposition}
For $\F(v_1, x_1;  \ldots; v_k, x_k) \in C_m^k(V, \W)$  
and 
$\F(v'_1, y_1; \ldots; v'_n, y_n) \in C_{m'}^n(V, \W)$,   
the action of $\delta_{m + m'}^{k + n}$ on their product \eqref{Z2n_pt_epsss0} is given by 
\begin{eqnarray}
\label{leibniz}
&& \delta_{m + m'}^{k + n} \left(  \F (v_1, x_1;  \ldots; v_k, x_k)  
 \cdot_\epsilon \F (v'_1, y_1; \ldots; v'_n, y_n) \right)  
\nn
&&
 \qquad =  
\left( \delta^k_m \F (v_1, x_1;  \ldots; v_k, x_k) \right)  
\cdot_\epsilon \F (v'_1, y_1; \ldots; v'_n, y_n)   
\nn
&&
\qquad  + (-1)^k 
\F (v_1, x_1;  \ldots; v_k, x_k) \cdot_{\epsilon}   \left( \delta^n_{m'}   
\F(v'_1, y_1; \ldots; v'_n, y_n)  \right).  
\end{eqnarray}
\end{proposition}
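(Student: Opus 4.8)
The plan is to expand the left-hand side of \eqref{leibniz} from the definition \eqref{hatdelta} of $\delta^{k+n}_{m+m'}$ and to reorganise the resulting summands so that they collect into the two products on the right. Writing $H=\F(v_1,x_1;\ldots;v_k,x_k)\cdot_\epsilon\F(v'_1,y_1;\ldots;v'_n,y_n)$ and regarding its $k+n$ entries as a single list whose first $k$ slots feed the first factor and whose last $n$ slots feed the second, I would first substitute the product formula \eqref{Z2n_pt_epsss} into every term of $\langle w',\delta^{k+n}_{m+m'}H\rangle$ and rewrite each $Y^W_{WV}(\cdot,\zeta_a)$ through \eqref{wprop} as $e^{\zeta_a L_W(-1)}Y_W(\cdot,-\zeta_a)$, exactly as in the proof of Proposition \ref{derga}. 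After this rewriting each face operation of the coboundary acts on an explicit matrix element of module vertex operators, to which the composability and associativity relations of Proposition \ref{comp-assoc} apply; the $dz_i^{\wt(v_i)}$ multipliers are left untouched throughout, as noted after \eqref{hatdelta}.

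The second step is the term-by-term matching. The internal sum $\sum_{i=1}^{k+n}(-1)^i$ splits into three ranges according to the position of the merged argument: for $1\le i\le k-1$ the insertion $\omega_V(\cdot,\cdot)$ acts inside the first factor and, upon forming $\cdot_\epsilon$, reproduces the internal terms of $\delta^{k}_{m}\F(v_1,x_1;\ldots;v_k,x_k)$; for $k+1\le i\le k+n$ it acts inside the second factor and, carrying the overall $(-1)^k$, reproduces the internal terms of $\delta^{n}_{m'}\F(v'_1,y_1;\ldots;v'_n,y_n)$; and the single crossing index $i=k$, which merges the last argument of the first factor with the first argument of the second, lands in the last slot of the first factor and matches precisely the $i=k$ term of $\delta^{k}_{m}\F$ taken in $\cdot_\epsilon$-product with the second factor. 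For the boundary terms I would show that the leftmost insertion $\omega_W(\cdot,x_1)H$ passes into the first factor only, giving the left term of $\delta^{k}_{m}\F$ in $\cdot_\epsilon$-product with the second factor, while the rightmost insertion passes into the second factor with sign $(-1)^{k+n+1}=(-1)^k(-1)^{n+1}$, matching $(-1)^k$ times the right term of $\delta^{n}_{m'}\F$. Each such ``passing into a factor'' is an instance of the associativity in Proposition \ref{comp-assoc}, and the signs are tracked as in the classical cup-product computation.

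The crux, and the step I expect to be the main obstacle, is the cancellation of the two leftover seam terms: the right boundary term of $\delta^{k}_{m}\F$ appearing in $(\delta^{k}_{m}\F)\cdot_\epsilon\F$, and the left boundary term of $\delta^{n}_{m'}\F$ appearing in $(-1)^k\F\cdot_\epsilon(\delta^{n}_{m'}\F)$. These carry opposite signs $(-1)^{k+1}$ and $(-1)^{k}$, so it suffices to prove that a module-operator insertion $\omega_W(a,\cdot)$, $a\in V$, at the seam point may be attributed to either factor, i.e.
\[
\left(\omega_W(a,\cdot)\,\F(v_1,x_1;\ldots;v_k,x_k)\right)\cdot_\epsilon\F(v'_1,y_1;\ldots;v'_n,y_n)
=\F(v_1,x_1;\ldots;v_k,x_k)\cdot_\epsilon\left(\omega_W(a,\cdot)\,\F(v'_1,y_1;\ldots;v'_n,y_n)\right).
\]
Geometrically this says the seam point lies in the overlap of the two sewn annuli; algebraically it is forced by the sewing constraint $\zeta_1\zeta_2=\epsilon$ of \eqref{pinch} together with the duality $u\leftrightarrow\overline u$ of \eqref{dubay} that glues the two matrix elements in \eqref{Z2n_pt_eps1q1}. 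I would establish it by transferring $Y_W(a,\cdot)$ across the basis summation using non-degeneracy of the pairing $\langle\cdot,\cdot\rangle_\lambda$ and the commutativity of Proposition \ref{n-comm}, checking that the two sides are analytic continuations of one another on the common domain of convergence supplied by Proposition \ref{derga}. Once this cancellation is in place, collecting the matched groups gives exactly the right-hand side of \eqref{leibniz}; absolute convergence of every series is guaranteed by Proposition \ref{derga}, and membership of all forms in the relevant $C^{\bullet}_{\bullet}(V,\W)$ spaces follows from Proposition \ref{tolsto}.
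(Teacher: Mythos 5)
Your proposal follows essentially the same route as the paper's own proof: expand $\delta^{k+n}_{m+m'}$ through the product definition \eqref{Z2n_pt_epsss}, match the internal and boundary terms factor by factor, and reduce everything to the seam-transfer identity, which the paper establishes exactly as you suggest --- rewriting $Y^{W}_{WV}$ via \eqref{wprop}, commuting the insertion across $Y_W(u,-\zeta_1)$ by locality \eqref{porosyataw}, and passing it to the second matrix element through the basis summation. The one detail you gloss over is that the transferred insertion arrives with a shifted coordinate $x_{k+1}+\zeta_1-\zeta_2$ rather than the same one on both sides; the paper reconciles this with $\omega_W(v'_1,y_1)$ by exploiting the arbitrariness of the extra element $(v_{k+1},x_{k+1})$ produced by the coboundary, choosing $v_{k+1}=v'_1$ and $x_{k+1}=y_1+\zeta_2-\zeta_1$.
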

\begin{remark}
Checking \eqref{hatdelta} we see that   
an extra arbitrary vertex algebra element $v_{n+1} \in V$, as well as a corresponding  
 extra arbitrary formal parameter $z_{n+1}$ appear
 as a result of the action of $\delta^n_m$ on  
$\F \in C^n_m(V, \W)$ mapping it to $C^{n+1}_{m-1}(V, \W)$. 
In application to the $\epsilon$-product \eqref{Z2n_pt_epsss0}
 these extra arbitrary elements are involved in the  
definition of the action of $\delta_{m + m'}^{k + n}$ on 
 $\F (v_1, x_1;  \ldots; v_k, x_k) 
 \cdot_{\epsilon} \F (v'_1, y_1; \ldots; v'_n, y_n)$.   
\end{remark}
\begin{proof}
According to \eqref{hatdelta}, the action of 
$\delta_{m + m'}^{k + n}$ on 
$\F(v_1, x_1; \ldots; v_k, x_k; v'_1, y_1; \ldots; v'_k, y_n; \epsilon)$  
is given by  
\begin{eqnarray*}
&&   
\langle w', 
 \delta_{m + m'}^{k + n} \;
\F(v_1, x_1; \ldots; v_k, x_k; v'_1, y_1; \ldots; v'_n, y_n ; \epsilon) \rangle  
\nn 
&& \quad =
\langle w', \sum_{i=1}^k(-1)^i \;   
\F ( v_1, x_1; \ldots;  v_{i-1}, x_{i-1}; \;  \omega_V (v_i, x_i - x_{i+1})  
 v_{i+1}, x_{i+1}; \; v_{i+2}, x_{i+2};  
\nn
&& \qquad \qquad \qquad 
\ldots;  v_k, x_k;  v'_1, y_1; \ldots; v'_n, y_n; \epsilon ) \rangle   
\end{eqnarray*}
\begin{eqnarray*}
&& \qquad + 
\sum_{i=1}^n(-1)^i \; \langle w',  
\F \left( v_1, x_1; \ldots; v_k, x_k;  v'_1, y_1; \ldots; v'_{i-1}, y_{1-1};
\right.
\nn
&&\qquad \qquad \qquad  \left. 
  \omega_V (v'_i, y_i - y_{i+1}; ) \;  
 v'_{i+1}, y_{i+1}; v'_{i+2}, y_{i+2}; \ldots; v'_n, y_n; \epsilon \right) \rangle 
\end{eqnarray*}
\begin{eqnarray*}
&& \qquad +  \langle w', \omega_W \left(v_1, x_1 \right) \; 
\F (v_2, x_2; \ldots; v_k, x_k; v'_1, y_1; \ldots; v'_n, y_n; \epsilon) \rangle     
\end{eqnarray*}
\begin{eqnarray*}
 & & \qquad +  \langle w, (-1)^{k+n+1}    
 \omega_W(v'_{n+1}, y_{n+1})  
\; \F(v_1, x_1; \ldots; v_k, x_k; v'_1, y_1; \ldots; v'_n, y_n; \epsilon ) \rangle  
\end{eqnarray*}
\begin{eqnarray*}
&& \quad = 
\sum\limits_{u\in V} 
\langle w', \sum_{i=1}^k(-1)^i \; Y^W_{VW}(   
\F ( v_1, x_1; \ldots;  v_{i-1}, x_{i-1}; \; \omega_V (v_i, x_i- x_{i+1})  
 v_{i+1}, x_{i+1}; \;  
\nn
&& \qquad \qquad \qquad  
v_{i+2}, x_{i+2};  \ldots;  v_k, x_k), \zeta_1) u \rangle
 \langle w', Y^W_{VW}( \F(v'_1, y_1; \ldots; v'_n, y_n), \zeta_2) 
\overline{u} \rangle  
\end{eqnarray*}
\begin{eqnarray*}
&& \qquad + 
\sum\limits_{u\in V}  \sum_{i=1}^n(-1)^i \; \langle w',  Y^W_{VW}( 
\F \left( v_1, x_1; \ldots; v_k, x_k \right), \zeta_1) u \rangle 
\nn
&&
\qquad \qquad \qquad 
   \langle w',  Y^W_{VW}( \F(v'_1, y_1; \ldots; v'_{i-1}, y_{i-1};  
\nn
&& 
\qquad \qquad \qquad  \qquad \omega_V (v'_i, y_i - y_{i+1}; ) \;  
 v'_{i+1}, y_{i+1}; v'_{i+2}, y_{i+2}; \ldots; v'_n, y_n ), \zeta_2) \overline{u} \rangle  
\end{eqnarray*}
\begin{eqnarray*}
&& \qquad +  \sum\limits_{u\in V} \langle w', Y^W_{VW}(
 \omega_W \left(v_1, x_1 \right) \; \F (v_2, x_2 ; \ldots; v_k, x_k), \zeta_1) u \rangle  
\nn
&&
 \qquad \qquad
\langle w', Y^W_{VW}( \F( v'_1, y_1; \ldots; v'_n, y_n ), \zeta_2) \overline{u} \rangle    
\end{eqnarray*}
\begin{eqnarray*}
&&
\qquad +  \sum\limits_{u\in V}  \langle w', Y^W_{VW}( (-1)^{k+1}  
 \omega_W \left(v_{k+1}, x_{k+1} 
  \right) \;  \F (v_1, x_1 ; \ldots; v_k), \zeta_1) u\rangle  
\nn
&& \qquad \qquad \qquad 
\langle w', Y^W_{VW}(
\F( x_k; v'_1, y_1; \ldots; v'_n, y_n), \zeta_2) \overline{u} \rangle   
\end{eqnarray*}
\begin{eqnarray*}
&&
\qquad - 
\sum\limits_{u\in V} \langle w', (-1)^{k+1}   \langle w',  Y^W_{VW}(  
 \omega_W \left(v_{k+1}, x_{k+1} 
  \right) \; \F (v_1, x_1 ; \ldots; v_k, x_k), \zeta_1) u \rangle   
\nn
&&
\qquad \qquad  \qquad 
\langle w',  
Y^W_{VW}( \F(v'_1, y_1; \ldots; v'_{n}, y_{n}),  \zeta_2)  \overline{u} \rangle   
\nn
&&
\qquad +  \sum\limits_{u\in V}  \langle w', Y^W_{VW}(  
 \F(v_1, x_1; \ldots; v_k, x_k), \zeta_1)  u\rangle  
\nn
&&
\qquad \qquad 
 \langle w', Y^W_{VW}(  
\omega_W(v'_1, y_1)\; \F(v'_2, y_2; \ldots; v'_{n}, y_{n}  ), \zeta_2)  \overline{u}\rangle 
\nn
&& \qquad 
\sum\limits_{u\in V} \langle w', Y^W_{VW}(  
 \F(v_1, x_1; \ldots; v_k, x_k), \zeta_1) \rangle  
\nn
&& 
\qquad \qquad \langle w',  Y^W_{VW}(
   \omega_W(v'_1, y_1) \F( v'_2, y_2; \ldots; v'_n, y_n  ), \zeta_2) \rangle  
\end{eqnarray*} 
\begin{eqnarray*}
&& \quad = 
\sum\limits_{u\in V} 
\langle w',  \; Y^W_{VW}(  
\delta^k_m\F ( v_1, x_1; \ldots;  v_k, x_k), \zeta_1) u \rangle 
\nn
&& 
\qquad \qquad \qquad \qquad \langle w', Y^W_{VW}( \F(v'_1, y_1; \ldots; v'_n, y_n), \zeta_2) 
\overline{u} \rangle  
\nn
&& \qquad + (-1)^k 
\sum\limits_{u\in V}   \langle w',  Y^W_{VW}(
\F ( v_1, x_1; \ldots; v_k, x_k), \zeta_1) u \rangle   
\nn
&&
\qquad \qquad \qquad 
  \langle w',  Y^W_{VW}( \delta^n_{m'}  
\F(v'_1, y_1; \ldots;  v'_n, y_n ), \zeta_2 ) \overline{u} \rangle     
\end{eqnarray*}
\begin{eqnarray*}
&& \quad = 
\langle w',   
\delta^k_m \F ( v_1, x_1; \ldots;  v_k, x_k) \cdot_\epsilon 
\langle w',  \F(v'_1, y_1; \ldots; v'_n, y_n) \rangle   
\nn
&& \qquad + (-1)^k 
   \langle w',  
 \F ( v_1, x_1; \ldots; v_k, x_k) \cdot_\epsilon    
     \delta^n_{m'}  \F(v'_1, y_1; \ldots;  v'_n, y_n ) \rangle,  
\end{eqnarray*}
since, 
\begin{eqnarray*}
&& \sum\limits_{u\in V} \langle w', (-1)^{k+1}   Y^W_{VW}(  
 \omega_W \left(v_{k+1}, x_{k+1} 
  \right) \; \F (v_1, x_1 ; \ldots; v_k, x_k), \zeta_1) u \rangle   
\nn
&&
\qquad \qquad \langle w',  
Y^W_{VW}( \F(v'_1, y_1; \ldots; v'_n, y_n),  \zeta_2) \overline {u} \rangle   
\end{eqnarray*}
\begin{eqnarray*} 
&& 
=\sum\limits_{u\in V} \langle w', (-1)^{k+1} e^{\zeta_1 L_W{(-1)}}   Y_W(u, -\zeta_1)  \;  
 \omega_W \left(v_{k+1}, x_{k+1}  
  \right) \; \F (v_1, x_1 ; \ldots; v_k, x_k) \rangle   
\nn
&&
\qquad \qquad \langle w',  
Y^W_{VW}( \F(v'_1, y_1; \ldots; v'_n, y_n),  \zeta_2) \overline{u} \rangle   
\end{eqnarray*}
\begin{eqnarray*} 
&&
=\sum\limits_{u\in V} \langle w', (-1)^{k+1} e^{\zeta_1 L_W{(-1)}} 
 \omega_W \left(v_{k+1}, x_{k+1}  \right)  
 Y_W(u, -\zeta_1)  \;  
  \; \F (v_1, x_1 ; \ldots; v_{k}, x_k) \rangle  
\nn
&&
\qquad \qquad \langle w',  
Y^W_{VW}( \F(v'_1, y_1; \ldots; v'_{n}, y_{n}),  \zeta_2) \overline{u} \rangle   
\end{eqnarray*}
\begin{eqnarray*} 
\nn
&&
= 
\sum\limits_{u\in V} \langle w', (-1)^{k+1} \; 
\omega_W \left(v_{k+1}, x_{k+1} +\zeta_1  \right)\;  e^{\zeta_1 L_W{(-1)}}  
 Y_W(u, -\zeta_1)  \;  
  \; \F (v_1, x_1 ; \ldots; v_k, x_k) \rangle  
\nn
&&
\qquad \qquad \langle w',  
Y^W_{VW}( \F(v'_1, y_1; \ldots; v'_n, y_n),  \zeta_2) \overline{u} \rangle    
\end{eqnarray*}
\begin{eqnarray*} 
&&
=\sum\limits_{v\in V} 
\sum\limits_{u\in V} 
\langle v', (-1)^{k+1} \; \omega_W \left(v_{k+1}, x_{k+1}+\zeta_1  \right) w \rangle  
\nn
&&
\qquad \qquad \langle w',  e^{\zeta_1 L_W{(-1)}}  
 Y_W(u, -\zeta_1)  \;  
  \; \F (v_1, x_1 ; \ldots; v_k, x_k) \rangle  
\nn
&&
\qquad \qquad \langle w',  
Y^W_{VW}( \F(v'_1, y_1; \ldots; v'_n, y_n),  \zeta_2) \overline{u} \rangle   
\end{eqnarray*}
\begin{eqnarray*} 
&&
= 
\sum\limits_{u\in V} 
 \langle w',  e^{\zeta_1 L_W{(-1)}}   
 Y_W(u, -\zeta_1)  \;  
  \; \F (v_1, x_1 ; \ldots; v_k, x_k) \rangle  
\nn
&&
\qquad \qquad
 \sum\limits_{v\in V} \langle v', (-1)^{k+1} \; \omega_W \left(v_{k+1}, x_{k+1}+
\zeta_1  \right) w \rangle    
\langle w',  Y^W_{VW}( \F(v'_1, y_1; \ldots; v'_n, y_n),  \zeta_2) \overline{u} \rangle   
\end{eqnarray*}
\begin{eqnarray*} 
&&
= 
\sum\limits_{u\in V} 
 \langle w',  
 Y^W_{VW}(   
   \F (v_1, x_1 ; \ldots; v_k, x_k) , \zeta_1) u \; \rangle  
\nn
&&
\qquad \qquad 
 \langle w', (-1)^{k+1} \; \omega_W \left(v_{k+1}, x_{k+1}+\zeta_1  \right) 
\;  Y^W_{VW}( \F(v'_1, y_1; \ldots; v'_n, y_n),  \zeta_2) \overline{u} \rangle   
\end{eqnarray*}
\begin{eqnarray*} 
&&
= 
\sum\limits_{u\in V} 
 \langle w',   
 Y^W_{VW}(   
   \F (v_1, x_1 ; \ldots; v_k, x_k) , \zeta_1) u \; \rangle   
\nn
&&
\qquad \qquad 
 \langle w', (-1)^{k+1} \; \omega_W \left(v_{k+1}, x_{k+1}+\zeta_1  \right) 
\; e^{\zeta_2 L_W{(-1)}}    Y_W(\overline{u}, -\zeta_2) \; 
\F(v'_1, y_1; \ldots; v'_n, y_n) \rangle   
\end{eqnarray*}
\begin{eqnarray*} 
&&
= 
\sum\limits_{u\in V} 
 \langle w',  
 Y^W_{VW}(  
   \F (v_1, x_1 ; \ldots; v_k, x_k) , \zeta_1) u \; \rangle  
\nn
&&
\qquad 
 \langle w', (-1)^{k+1} \; 
\; e^{\zeta_2 L_W{(-1)}} \;   Y_W(\overline{u}, -\zeta_2)
\;
\omega_W \left(v_{k+1}, x_{k+1}+\zeta_1-\zeta_2 \right)  \; 
\F(v'_1, y_1; \ldots; v'_n, y_n)  \rangle    
\end{eqnarray*}
\begin{eqnarray*} 
&& 
\qquad \qquad 
= \sum\limits_{u\in V} \langle w', Y^W_{VW}(  
 \F(v_1, x_1; \ldots; v_k, x_k), \zeta_1) u \rangle  
\nn
&& 
\qquad \qquad \langle w',  Y^W_{VW}(
\omega_W(v'_1, y_1) \; \F( v'_2, y_2; \ldots; v'_n, y_n  ), \zeta_2) \overline{u} \rangle,    
\end{eqnarray*}
due to locality \eqref{porosyataw} of vertex operators, 
and arbitrariness of $v_{k+1}\in V$ and $x_{k+1}$,  
we can always put 
\[
\omega_W \left(v_{k+1}, x_{k+1}+\zeta_1-\zeta_2 \right)  =\omega_W(v'_1, y_1),  
\]
for $v_{k+1}=v'_1$, $x_{k+1}= y_1+\zeta_2-\zeta_1$.   
\end{proof}
Finally, we have the following 
\begin{corollary}
The multiplication \eqref{Z2n_pt_epsss0} extends the chain-cochain 
complex \eqref{conde}--\eqref{hat-complex} structure 
for all products $C^k_m(V, \W) \times C^n_{m'}(V, \W)$,   
$k$, $n \ge0$, $m$, $m' \ge0$.    
\hfill $\qed$
\end{corollary}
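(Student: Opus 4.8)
The plan is to assemble the corollary directly from the three structural facts already established for the $\epsilon$-product on double-complex spaces, so that essentially no new computation is required. First I would invoke Proposition \ref{tolsto}, which shows that the product \eqref{Z2n_pt_epsss} restricts to a well-defined map
\[
\cdot_\epsilon : C^{k}_{m}(V,\W) \times C^{n}_{m'}(V,\W) \to C^{k+n}_{m+m'}(V,\W),
\]
so that the product respects the bigrading of the chain-cochain double complex: the cochain degree adds as $k+n$ and the composability degree adds as $m+m'$. This is what gives meaning to speaking of multiplication "inside" the complex \eqref{conde}--\eqref{hat-complex} at all, and it already carries with it the convergence and membership secured in Proposition \ref{derga}.

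Next I would record the graded-derivation property supplied by the Leibniz identity \eqref{leibniz}, namely that for $\F \in C^{k}_{m}(V,\W)$ and $\mathcal{G} \in C^{n}_{m'}(V,\W)$,
\[
\delta^{k+n}_{m+m'}\bigl(\F \cdot_\epsilon \mathcal{G}\bigr) = \bigl(\delta^{k}_{m}\F\bigr)\cdot_\epsilon \mathcal{G} + (-1)^{k}\,\F \cdot_\epsilon \bigl(\delta^{n}_{m'}\mathcal{G}\bigr).
\]
Combined with the bidegree rule above, this is exactly the compatibility condition making $\cdot_\epsilon$ behave as the multiplication of a bigraded differential algebra, in the manner of the tensor product of cochain complexes: the differential of a product is expressed entirely through the differentials of its factors, with the sign $(-1)^{k}$ dictated by the cochain degree of the left factor. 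From this one reads off that the product of two cocycles is a cocycle and the product of a cocycle with a coboundary is a coboundary, so $\cdot_\epsilon$ passes to the cohomology of the double complex.

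Finally I would verify the closure consistency that makes the extension genuine, by applying the derivation identity twice. Using $\delta^{n+1}_{m-1}\circ\delta^{n}_{m}=0$ from Proposition \ref{cochainprop} on each factor, the cross term $\bigl(\delta^{k}_{m}\F\bigr)\cdot_\epsilon\bigl(\delta^{n}_{m'}\mathcal{G}\bigr)$ appears twice with opposite signs $(-1)^{k+1}$ and $(-1)^{k}$ and cancels, so $\delta$ remains square-zero on all products and no relations beyond those of the target complex \eqref{hat-complex} are introduced. Assembling these three observations---the bidegree rule of Proposition \ref{tolsto}, the Leibniz identity \eqref{leibniz}, and the square-zero cancellation---yields the corollary. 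I expect no analytic obstacle here, since convergence was already established in Proposition \ref{derga}; the only care needed is the simultaneous bookkeeping of the two independent gradings and the correct propagation of the sign $(-1)^{k}$ through the derivation rule.
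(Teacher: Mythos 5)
Your proposal is correct and matches the paper's intent: the paper states this corollary with no written proof (just $\qed$), treating it as an immediate consequence of Proposition \ref{tolsto} (the bidegree rule $\cdot_\epsilon : C^{k}_{m}(V,\W)\times C^{n}_{m'}(V,\W)\to C^{k+n}_{m+m'}(V,\W)$) together with the Leibniz formula \eqref{leibniz}, which is exactly what you assemble. Your additional check that the cross terms $(\delta^{k}_{m}\F)\cdot_\epsilon(\delta^{n}_{m'}\mathcal{G})$ cancel with signs $(-1)^{k+1}$ and $(-1)^{k}$, so that $\delta$ remains square-zero on products, is a correct and worthwhile piece of bookkeeping that the paper leaves implicit.
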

\begin{corollary}
The product \eqref{Z2n_pt_epsss0} and the product operator \eqref{hatdelta}  
endow the space $C^k_m(V, \W)$ $\times$ 
$C^n_m(V, \W)$, $k$, $n \ge0$, $m$, $m' \ge0$, 
 with the structure of a 
bi-graded differential algebra $\mathcal G(V, \W, \cdot, \delta^n_m)$. 
\hfill $\qed$
\end{corollary}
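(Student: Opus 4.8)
The plan is to recognize that this corollary is a wrap-up statement whose content is already distributed across the three preceding results, and whose proof consists of assembling them and checking that the grading and sign conventions cohere. First I would fix the total bi-graded vector space $\mathcal G(V,\W)=\bigoplus_{k,m\ge 0}C^k_m(V,\W)$, where the pair $(k,m)$ records the cochain degree $k$ together with the chain degree $m$. The assertion then splits into three verifications: that $\cdot_\epsilon$ respects the bi-grading, that $\delta$ is a square-zero map of fixed bidegree, and that the two are linked by a graded Leibniz rule.

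For the product I would invoke Proposition \ref{tolsto}, which gives $\cdot_\epsilon\colon C^k_m(V,\W)\times C^n_{m'}(V,\W)\to C^{k+n}_{m+m'}(V,\W)$; this is precisely the statement that the product is additive in both indices, so it carries $\mathcal G(V,\W)\times\mathcal G(V,\W)$ into $\mathcal G(V,\W)$. For the differential, Proposition \ref{cochainprop} supplies both the bidegree, $\delta^n_m\colon C^n_m\to C^{n+1}_{m-1}$ from \eqref{conde} (so $\delta$ has bidegree $(+1,-1)$), and the identity $\delta^{n+1}_{m-1}\circ\delta^n_m=0$ from \eqref{deltacondition}, i.e.\ the rows \eqref{hat-complex} are genuine complexes. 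For the compatibility I would quote the Leibniz formula \eqref{leibniz}: for $\F_1\in C^k_m(V,\W)$ and $\F_2\in C^n_{m'}(V,\W)$ one has $\delta_{m+m'}^{k+n}(\F_1\cdot_\epsilon\F_2)=(\delta^k_m\F_1)\cdot_\epsilon\F_2+(-1)^k\,\F_1\cdot_\epsilon(\delta^n_{m'}\F_2)$. Taken together these are exactly the axioms of a bi-graded complex carrying a product that obeys the graded Leibniz law, and I would conclude that $(\mathcal G(V,\W),\cdot_\epsilon,\delta)$ is the claimed bi-graded differential algebra.

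A point deserving explicit care is the bookkeeping of gradings and signs. The Koszul factor $(-1)^k$ in \eqref{leibniz} is governed by the cochain index alone, so the grading that controls the differential-algebra sign rule is $k$, with respect to which $\delta$ raises degree by one and is therefore of odd parity, consistently with the sign in \eqref{leibniz}; the chain index $m$ enters only as an auxiliary grading that the product adds and the differential lowers. I would record that this pairing of conventions is coherent, and that since $m$ can only decrease under $\delta$, the structure is bounded in the chain direction, so each row \eqref{hat-complex} is a finite complex, matching Proposition \ref{cochainprop}.

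The main obstacle is conceptual rather than computational: the product $\cdot_\epsilon$ is built from the sewing procedure and parametrized by $\epsilon$, and is \emph{not} associative, so the word ``algebra'' must be read in the weak sense of a bilinear, bi-degree-additive operation compatible with a square-zero differential, not as an associative algebra. I would therefore state precisely which axioms are being asserted, flag explicitly that associativity is neither used nor claimed, and remark that for each fixed nonzero $\epsilon$ all three verifications above hold verbatim, so the differential graded structure $\mathcal G(V,\W,\cdot,\delta^n_m)$ is defined uniformly in the parameter $\epsilon$.
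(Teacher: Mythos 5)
Your proposal is correct and matches the paper's (implicit) argument: the paper states this corollary with no proof beyond a $\qed$, treating it exactly as you do — as an assembly of Proposition \ref{tolsto} (bi-degree additivity of $\cdot_\epsilon$), Proposition \ref{cochainprop} (bidegree $(+1,-1)$ and $\delta^{n+1}_{m-1}\circ\delta^n_m=0$), and the Leibniz formula \eqref{leibniz}. Your additional care with the Koszul sign being governed by the cochain index $k$ and your explicit caveat that ``algebra'' cannot mean associative algebra here are sensible clarifications that go slightly beyond what the paper records, but they do not change the route.
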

\section{Example: exceptional complex} 
\label{example}
In addition to the complex $(C^n_m(V, \W), \delta^n_m)$
 provided by \eqref{conde}--\eqref{hat-complex},  
 there exists an exceptional short complex  
$(C^2_{ex}(V, \W), \delta^2_{ex})$.  
 In \cite{Huang} we have  
\begin{lemma}
\label{lemmo}
 For $n=2$, there exists a subspace 
 $C^0_{ex}(V, \W)$  
\[
C_m^2(V, \W) \subset C^2_{ex}(V, \W) \subset C_0^2(V, \W),   
\]
 for all $m \ge 1$, with the action of coboundary operator 
$\delta^2_m$ defined.    
\hfill $\qed$
\end{lemma}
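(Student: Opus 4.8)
The plan is to exhibit $C^2_{ex}(V,\W)$ as an explicit intermediate space in the composability filtration of $C^2_0(V,\W)$, and then to check directly that the three types of terms in the coboundary formula \eqref{hatdelta} remain well defined on it. First I would record the monotonicity of the filtration: by part (1) of Proposition \ref{comp-assoc}, a map composable with $m$ vertex operators is composable with every $p\le m$, so that $C^2_m(V,\W)\subset C^2_{m-1}(V,\W)\subset\dots\subset C^2_0(V,\W)$ is a decreasing chain and, in particular, $C^2_m(V,\W)\subset C^2_0(V,\W)$ for every $m\ge 1$. This already accounts for the outer inclusion in the statement and reduces the task to producing a single space $C^2_{ex}(V,\W)$, independent of $m$, sitting above the whole chain and below $C^2_0(V,\W)$, on which the formula for $\delta^2_m$ in \eqref{hatdelta} retains its meaning.

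Next I would define $C^2_{ex}(V,\W)$ to be the subspace of those $\F\in C^2_0(V,\W)$ which, beyond satisfying the $L_V(-1)$-derivative property \eqref{lder1}, the $L_V(0)$-conjugation property \eqref{loconj} and the shuffle symmetry \eqref{shushu}, meet the minimal composability requirement needed to form the coboundary: namely that the matrix elements of the three summands of $\delta^2_m\F$ in \eqref{hatdelta} extend analytically to rational forms in $(z_1,z_2,z_3)$ with the only possible poles along the diagonals $z_i=z_j$. Since, by Definition \ref{composabilitydef} together with Proposition \ref{comp-assoc}, composability with any $m\ge 1$ vertex operators is strictly stronger than this one-operator requirement, every $C^2_m(V,\W)$ with $m\ge 1$ is contained in $C^2_{ex}(V,\W)$, whereas $C^2_{ex}(V,\W)\subset C^2_0(V,\W)$ holds by construction. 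This yields both inclusions asserted in the lemma.

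It then remains to verify that $\delta^2_m$ is genuinely defined on $C^2_{ex}(V,\W)$. I would treat the three kinds of terms in \eqref{hatdelta} separately. The contraction terms, involving $\omega_V(v_1,z_1-z_2)v_2$ and $\omega_V(v_2,z_2-z_3)v_3$, are controlled by the $L_V(-1)$-derivative and $L_V(0)$-conjugation properties together with the power-series expansion \eqref{power-series}. The left multiplication $\omega_W(v_1,z_1)\,\F(v_2,z_2;v_3,z_3)$ and the right multiplication $\omega_W(v_3,z_3)\,\F(v_1,z_1;v_2,z_2)$ are handled via Proposition \ref{n-comm} and Proposition \ref{correl-fn}, which supply precisely the absolute convergence and the analytic continuation to a rational form with poles confined to the diagonals. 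Assembling the three contributions shows that $\delta^2_m\F$ is a well-defined element with the required pole structure.

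The main obstacle I expect is pinning down the exceptional composability condition so that it is simultaneously weak enough to be implied by composability with a single vertex operator (hence to contain each $C^2_m$ with $m\ge 1$) yet strong enough to guarantee the rationality and correct poles of the right multiplication term $\omega_W(v_3,z_3)\,\F(v_1,z_1;v_2,z_2)$, which is exactly the term where the full force of composability is ordinarily invoked. The delicate point is the analytic continuation of $\langle w',\omega_W(v_3,z_3)\,\F(v_1,z_1;v_2,z_2)\rangle$ from the region $|z_3|>|z_1|,|z_2|$ to a global rational form; once this is secured for $n=2$, the pole-order bounds $N^n_m(v_i,v_j)$, and hence the membership of $\delta^2_m\F$ in the appropriate target, follow as in \cite{Huang}.
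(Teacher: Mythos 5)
Your proposal is correct and follows essentially the same route as the paper (and Huang's original construction): $C^2_{ex}(V,\W)$ is defined as the subspace of $C^2_0(V,\W)$ on which the summands of the coboundary formula \eqref{hatdelta} converge absolutely and extend to rational forms with only the prescribed poles (the paper encodes this via the grouped expressions $G_1$, $G_2$ in \eqref{pervayaforma}--\eqref{vtorayaforma} with projections $P_r$ inserted), and the two inclusions then follow from Proposition \ref{comp-assoc} exactly as you argue. The only cosmetic differences are that the paper groups the four $n=2$ coboundary terms into two pairs rather than your three types, and your appeal to Propositions \ref{n-comm} and \ref{correl-fn} in the final verification is unnecessary (and not strictly applicable to a general $\F$), since well-definedness of the coboundary on $C^2_{ex}(V,\W)$ holds by your own construction.
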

Let us recall some facts about the exceptional complex.  
 Consider the space $C_0^2(V, \W)$.  
 It consist of $\W_{z_1, z_2}$-elements with zero 
vertex operators composable. 
The space $C_0^2(V, \W)$ contains elements of $\W_{z_1, z_2}$ so that the action   
of $\delta_0^2$ is zero.  
Nevertheless, as for $\mathcal J^n_m(\Phi)$ in \eqref{Jnm0},
 Definition \ref{composabilitydef}, let us consider sum of   
projections 
\[
P_r: \W_{z_i, z_j} \to W_r, 
\]
for $r\in \C$, and $(i, j)=(1,2), (2, 3)$,   
so that the condition \eqref{Jnm0}  is satisfied for some elements  
similar to the action \eqref{Jnm0} of $\delta^2_0$. 
Separating the first two and the second two summands in \eqref{hatdelta},
 we find that for a subspace  
of $C_0^2(V, \W)$ (which we denote as $C_{ex}^2(V, \W)$),     
for $v_1$, $v_2$, $v_3 \in V$,   
 and arbitrary $w'\in W'$, $\zeta \in \C$, the following elements 
\begin{eqnarray}
\label{pervayaforma}
&& G_1(z_1, z_2,  z_3) 
\nn
&&
= \sum_{r\in \C} \Big( 
\langle w', E^{(1)}_W \left(  v_1, z_1;  
P_r \left(   
\F\left( v_2, z_2-\zeta;  v_3, z_3 - \zeta  \right)  \right) \rangle  \right.   
\nn
&&\quad +\langle w', \left. 
\F \left( v_1, z_1;  P_r \left( E^{(2)}_V 
\left( v_2, z_2-\zeta; v_3, z_3 -\zeta; \one_V \right),     
 \zeta \right) \right) \right.
\rangle \Big) 
\nn
&&
\nn
&& = \sum_{r\in \C} \big( \langle w', \omega_W \left( v_1, z_1 \right)\;        
 P_r \left( \F\left(v_2, z_2-\zeta; v_3, z_3 - \zeta \right) \right) \rangle  
\nn
&&\quad +\langle w', 
\F \left( v_1, z_1;  P_r \left(  \omega_V   
\left(v_2, z_2 -\zeta\right)  \omega_V \left( v_3, z_3 -\zeta \right) \one_V \right),       
 \zeta \right)
\rangle\big),  
\nn
\end{eqnarray}
and 
\begin{eqnarray}
\label{vtorayaforma}
&&
G_2(z_1, z_2, z_3) 
\nn 
 && \qquad =\sum_{r\in \C} \Big(  \langle w',  
\F \left(  
P_r \left(   E^{(2)}_V \left( v_1, z_1-\zeta, v_2, z_2 -\zeta; \one_V \right) \right),  
\zeta;  
 v_3, z_3 \right) \rangle  
\nn
&& \qquad \qquad+ \langle w',  
E^{W; (1)}_{WV} \left(P_r \left(    \F \left(   v_1,  z_1 -\zeta;   v_{2}, z_2 -\zeta \right), 
\zeta;    
 v_3, z_3 \right) \right) \rangle \Big) 
\nn
&&
\nn
&&
 \qquad = \sum_{r\in \C} \big(  \langle w', 
 \F \left(   P_r \left( \omega_V ( v_1, z_1 -\zeta ) \omega_V (v_2, z_2 -\zeta) \one_V, 
\zeta \right);   
 v_3, z_3 \right) \rangle    
\nn
&&\qquad \qquad+ \langle w', 
 \omega_V (v_3, z_3 ) \; P_r \left(\F \left(v_1, z_1-\zeta; v_2, z_2-\zeta \right)  \right) 
  \rangle \big),  
\end{eqnarray}
are absolutely convergent in the regions 
\[
|z_1-\zeta|>|z_2-\zeta|,   
\]
\[
 |z_2 -\zeta|>0,  
\] 
\[
|\zeta-z_3|>|z_1-\zeta|,  
\]
\[
|z_2-\zeta|>0, 
\]
where $z_i$, $1 \le i \le 3$. 
These functions can be analytically extended to  
rational form-valued functions in $z_1$ and $z_2$ with the only possible poles at 
$z_1$, $z_2=0$, and $z_1=z_2$.
Note that \eqref{pervayaforma} and \eqref{vtorayaforma} 
constitute the first two and the last two terms of  
\eqref{hatdelta} correspondingly.  
According to Proposition \ref{comp-assoc} (cf. Appendix \ref{composable}),  
 $C_m^2(V, \W)$ is a subspace of $C_{ex}^2(V, \W)$,  for $m\ge 0$, 
and $\F \in C_m^2(V, \W)$ are composable with $m$ vertex operators.  
Then we have 
\begin{definition}
\label{cobop}
 The coboundary operator 
\begin{equation}
\label{halfdelta}
\delta^2_{ex}: C_{ex}^2(V, \W)
\to C_0^3(V, \W), 
\end{equation}
is defined by   
\begin{eqnarray}
\label{ghalfdelta}
 \delta^2_{ex} \F &=& 
 \langle w', \omega_W (v_1, z_1 ) \;  
\F \left(v_2, z_2; v_3, z_3 \right) \rangle  
\nn
\quad \quad 
&-&
 \langle w',  
\F \left(  \omega_V( v_1, z_1)\; \; \omega_V (v_2, z_2 ) \one_V; v_3, z_3 \right)   
\rangle
\nn
 &&\quad
+ 
 \langle w', \F(v_1, z_1;  \; \omega_V (v_2, z_2)\;  \omega_V( v_3, z_3) \one_V) \rangle  
\nn
 &&\quad \quad 
+ 
 \langle w',  
 \omega_W (v_3, z_3) \; \F \left(v_1, z_1; v_2, z_2 \right)    
\rangle, 
\end{eqnarray}
for arbitrary $w'\in W'$, 
$\F \in C_{ex}^2(V, \W)$,  
$(v_1, v_2, v_3) \in V$ and $(z_1, z_2, z_3)\in F_3\C$.    
\end{definition}
In \cite{Huang} we also find  
\begin{proposition}
\label{cochainprop1}
The operator 
\eqref{ghalfdelta} provides the chain-cochain complex 
\[
\delta^2_{ex}\circ \delta^1_2=0,  
\]
\begin{equation}
\label{hat-complex-half}
0\longrightarrow C^0_3(V, \W)
\stackrel{\delta_3^0}{\longrightarrow} 
 C^1_2(V, \W)
\stackrel{\delta_2^1}{\longrightarrow} C_{ex}^2(V, \W)
\stackrel{\delta_{ex}^2}{\longrightarrow} 
 C_0^3(V, \W)\longrightarrow 0.
\end{equation}
\hfill $\qed$
\end{proposition}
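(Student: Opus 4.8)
The plan is to reduce the statement to two assertions that are genuinely new relative to Proposition~\ref{cochainprop}: that $\delta^2_{ex}$ of \eqref{ghalfdelta} is a well-defined map $C^2_{ex}(V,\W)\to C^3_0(V,\W)$, and that $\delta^2_{ex}\circ\delta^1_2=0$. The left-hand portion of \eqref{hat-complex-half}, involving $\delta^0_3$ and $\delta^1_2$, is already part of the chain-cochain complex of Proposition~\ref{cochainprop}, so in particular $\delta^1_2\circ\delta^0_3=0$ holds and $\delta^1_2$ maps $C^1_2(V,\W)$ into $C^2_1(V,\W)$; by Lemma~\ref{lemmo} the latter is a subspace of $C^2_{ex}(V,\W)$, so the composite $\delta^2_{ex}\circ\delta^1_2$ makes sense.

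First I would establish well-definedness. For $\F\in C^2_{ex}(V,\W)$ the four summands of \eqref{ghalfdelta} are precisely the terms assembled in \eqref{pervayaforma} and \eqref{vtorayaforma}, whose absolute convergence and analytic continuation to rational forms with the stated poles is exactly the defining property of $C^2_{ex}(V,\W)$. Applying Proposition~\ref{comp-assoc} to the compositions of $\F$ with $E^{(1)}_W$ and $E^{(2)}_V$ appearing in \eqref{pervayaforma} and \eqref{vtorayaforma} then shows that $\delta^2_{ex}\F$ is composable with $0$ vertex operators, i.e.\ lands in $C^3_0(V,\W)$. It remains to check that $\delta^2_{ex}\F$ inherits the $L_V(-1)$-derivative property \eqref{lder1}, the $L_V(0)$-conjugation property \eqref{loconj}, and the shuffle symmetry \eqref{shushu}; each follows term by term from the corresponding property of $\F$ together with the intertwining relation \eqref{wprop} for $\omega_W$, exactly as in the proof of Proposition~\ref{cochainprop}.

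The heart of the matter is the identity $\delta^2_{ex}(\delta^1_2\F)=0$ for $\F\in C^1_2(V,\W)$. Writing $G=\delta^1_2\F$, the three-term expression
\[
G(v_1,z_1;v_2,z_2)=-\F(\omega_V(v_1,z_1-z_2)v_2)+\omega_W(v_1,z_1)\F(v_2,z_2)+\omega_W(v_2,z_2)\F(v_1,z_1)
\]
is substituted into the four summands of \eqref{ghalfdelta}, producing twelve terms. I would group these into pairs and show pairwise cancellation. The outer $\omega_W(v_1,z_1)$ and $\omega_W(v_3,z_3)$ actions, composed with the inner $\omega_W$-insertions coming from $G$, cancel against each other after using the vacuum creation property $\omega_V(v,z)\one_V=e^{zL_V(-1)}v$ to rewrite the exceptional arguments $\omega_V(v_1,z_1)\omega_V(v_2,z_2)\one_V$ and $\omega_V(v_2,z_2)\omega_V(v_3,z_3)\one_V$, and then invoking the associativity (iterate) formula for vertex operators together with the locality \eqref{porosyataw} of $\omega_W$. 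The remaining terms carrying a single inner product $\F(\omega_V(v_i,z_i-z_{i+1})v_{i+1})$ collapse by the same associativity identity applied to the nested products. This is the direct analogue of the computation proving $\delta^{n+1}_{m-1}\circ\delta^n_m=0$ in Proposition~\ref{cochainprop}, now specialised to the boundary case $m=0$.

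The main obstacle is analytic rather than algebraic. Because $\delta^2_{ex}$ lives at the edge $m=0$ of the double complex, the four terms of \eqref{ghalfdelta} are individually defined only through the $P_r$-projected, absolutely convergent sums of \eqref{pervayaforma}--\eqref{vtorayaforma}, each valid on its own domain such as $|z_1-\zeta|>|z_2-\zeta|$ and $|\zeta-z_3|>|z_1-\zeta|$. The formal cancellations above must therefore be justified as identities of rational functions obtained by analytic continuation from overlapping regions of absolute convergence, and one has to verify that rearranging the projected series (permitted by absolute convergence, cf.\ Propositions~\ref{comp-assoc} and \ref{correl-fn}) does not alter the continued function. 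Once this is secured the twelve terms cancel, so $\delta^2_{ex}\circ\delta^1_2=0$ and \eqref{hat-complex-half} is a complex.
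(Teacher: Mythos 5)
Your opening paragraph already contains the paper's entire proof, though you do not exploit it. The paper performs no term-by-term cancellation at all: it notes, exactly as you do, the inclusions
\[
\delta^1_2\, C^1_2(V,\W)\;\subset\; C^2_1(V,\W)\;\subset\; C^2_{ex}(V,\W),
\]
(the second from Lemma \ref{lemmo}), and then adds the single further observation that you stop short of: on the subspace $C^2_1(V,\W)$ the exceptional operator $\delta^2_{ex}$ of \eqref{ghalfdelta} coincides with the ordinary coboundary $\delta^2_1$ of \eqref{hatdelta} (the $E$-type arguments being identified via Proposition \ref{correl-fn}). Since the image of $\delta^1_2$ consists of maps composable with one vertex operator, never merely exceptional ones, the composite is computed entirely inside the ordinary complex, and $\delta^2_{ex}\circ\delta^1_2=\delta^2_1\circ\delta^1_2=0$ is immediate from \eqref{deltacondition} of Proposition \ref{cochainprop}; the complex \eqref{hat-complex-half} then follows, which is all the paper says.

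Your alternative route --- substituting $G=\delta^1_2\F$ into the four terms of \eqref{ghalfdelta} and cancelling the resulting twelve terms via the creation property, associativity and locality --- is viable in principle, since it is in essence the computation by which nilpotency $\delta^{n+1}_{m-1}\circ\delta^n_m=0$ is established in the first place. But as written it is a sketch rather than a proof: the pairwise cancellations are asserted, not exhibited, and the analytic difficulties you yourself flag (each of the four terms is defined only through $P_r$-projected series converging on different domains, and the rearrangements must be shown to respect analytic continuation) are precisely the hard work that the paper's restriction argument bypasses entirely. In short, your strategy re-proves a special case of an already-available identity at the cost of a delicate analytic verification you have not carried out; the missing step was simply to recognize that $\delta^2_{ex}$ restricted to $C^2_1(V,\W)$ \emph{is} $\delta^2_1$, after which nothing at the $m=0$ edge of the complex needs to be confronted.
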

Since  
\[
\delta_2^1 \;  C_2^1(V, \W) \subset  
 C_1^2(V, \W)\subset  
 C_{ex}^2(V, \W),
\]
the second formula follows from the first one, and 
\[
\delta^2_{ex}\circ  \delta^1_2 
= \delta^2_1 \circ \delta^1_2=0. 
\]
For elements of the spaces $C^2_{ex}(V, \W)$   
we have the following 
\begin{corollary}
The product of elements of the spaces $C^2_{ex} (V, \W)$ and  $C^n_m (V, \W)$ is given by 
\eqref{Z2n_pt_epsss0}, 
\begin{equation}
\label{pupa3}
\cdot_\epsilon: C^2_{ex} (V, \W) \times C^n_m (V, \W) \to C^{n+2}_m (V, \W),  
\end{equation}
and, in particular, 
\[
\cdot_\epsilon: C^2_{ex} (V, \W) \times C^2_{ex} (V, \W) \to C^4_0 (V, \W).   
\]
\end{corollary}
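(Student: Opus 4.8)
The plan is to derive this corollary as a restriction of Proposition \ref{tolsto}, using only the inclusion recorded in Lemma \ref{lemmo}. Recall from Lemma \ref{lemmo} that the exceptional space is sandwiched between the ordinary double-complex spaces, $C^2_m(V,\W) \subset C^2_{ex}(V,\W) \subset C^2_0(V,\W)$ for all $m \ge 1$. In particular every $\F \in C^2_{ex}(V,\W)$ is a map $V^{\otimes 2} \to \W_{z_1,z_2}$ satisfying the $L_V(-1)$-derivative, the $L_V(0)$-conjugation, and the shuffle-symmetry properties of Definition \ref{initialspace}, and is composable with $0$ vertex operators; thus $C^2_{ex}(V,\W)$ may be treated, for the purposes of the product, as a subspace of $C^2_0(V,\W)$.

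First I would apply Proposition \ref{tolsto} with $k=2$ and with the composability index of the first factor equal to $0$. Given $\F(v_1,x_1;v_2,x_2) \in C^2_{ex}(V,\W) \subset C^2_0(V,\W)$ and $\F(v'_1,y_1;\ldots;v'_n,y_n) \in C^n_m(V,\W)$, Proposition \ref{tolsto} then places the product \eqref{Z2n_pt_epsss} in $C^{2+n}_{0+m}(V,\W)=C^{n+2}_m(V,\W)$, which is exactly \eqref{pupa3}. For the special case I would take $n=2$ and $m=0$, so that both factors lie in $C^2_0(V,\W)$; Proposition \ref{tolsto} gives a product in $C^4_0(V,\W)$, and restricting the second factor back to $C^2_{ex}(V,\W) \subset C^2_0(V,\W)$ yields $\cdot_\epsilon : C^2_{ex}(V,\W)\times C^2_{ex}(V,\W) \to C^4_0(V,\W)$.

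The one point deserving attention is the bookkeeping of composability degrees. An element of $C^2_{ex}(V,\W)$ is composable with $0$ ordinary vertex operators, so it contributes $0$ to the sum $m+m'$ of composability indices in Proposition \ref{tolsto}; the extra convergence that makes it support the exceptional coboundary $\delta^2_{ex}$ plays no role here, because the target $C^{n+2}_m(V,\W)$ is an ordinary double-complex space and carries no exceptional structure to be preserved. Consequently the main step is simply to confirm that the hypotheses of Proposition \ref{tolsto} --- the shuffle symmetry \eqref{shushu} and composability with the stated number of vertex operators --- are inherited through the inclusion of Lemma \ref{lemmo}; the absolute convergence and the canonicity of the resulting form were already secured in Proposition \ref{derga} and in the proof of Proposition \ref{tolsto}. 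I therefore expect no substantive obstacle beyond this degree accounting.
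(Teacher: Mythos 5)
Your proposal is correct and follows essentially the same route as the paper: the paper likewise gets the parameter count $n+2$ from Proposition \ref{derga} and the composability count from the argument of Proposition \ref{tolsto}, with the exceptional factor contributing zero composable vertex operators via its inclusion in $C^2_0(V,\W)$. Your version merely makes explicit the use of Lemma \ref{lemmo}'s inclusion and the degree bookkeeping $0+m=m$, which the paper leaves implicit.
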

\begin{proof}
The fact that the number of formal parameters is $n+2$ in the product \eqref{Z2n_pt_epsss0} 
follows from  
Proposition \eqref{derga}.  
Consider the product \eqref{Z2n_pt_epsss0} for  
$C^2_{ex} (V, \W)$ and 
$C^n_m (V, \W)$.  
It is clear that, similar to considerations of the proof of Proposition \ref{tolsto}, 
the total number $m$ of vertex operators the product $\F$ is composable to remains the same. 
\end{proof}
\subsection{An application to cohomology invariants of vertex algebras}
\label{appi}
In this Subsection we give an explicit application of the product constructed in 
Section \ref{product} 
to cohomology invariants of vertex algebras by 
using the results of \cite{Ko}. 
Let us define a further product of a pair 
of elements of the spaces $C^k_m(V, \W)$ and $C^n_{m'}(V, \W)$,  
suitable for the formulation of cohomology invariants.  
Consider the forms 
$\F(v_1, z_1  ;   \ldots ;  v_n, z_k) \in   C_m^k(V, \W)$,    
$\widetilde{\F} (v_{k+1}, z_{k+1}; \ldots; v_{k+n}, z_{k+n}) \in   C_{m'}^n(V, \W)$,           
 with $r$ common vertex algebra elements (and, correspondingly, $r$ formal variables), and 
 $t$ common vertex operators mappings $\Phi$ and $\Psi$ are composable with. 
 Define an extra product  
of $\F$ and $\widetilde{\F}$,   
 $\F \cdot \widetilde{\F}: V^{\otimes(k +n-r)} \to  \W_{z_1, \ldots, z_{k+ n-r}}$,  
 $\F \cdot \widetilde{\F}$ $=$ $\left[\F,_{\cdot \epsilon} \widetilde{\F}\right]$     
$=$ $\F \cdot_\epsilon \widetilde{\F}$ $-$ $\widetilde{\F} \cdot_\epsilon \F$,     
where brackets denote the ordinary commutator on $\W_{z_1, \ldots, z_{k+ n-r}}$.  
Due to the properties of the maps $\F \in C_m^k(V, \W)$ and   
$\widetilde{\F} \in C_{m'}^n(V, \W)$,   
the product $\Phi \cdot \Psi$
belongs to the space $C_{m + m'- t }^{k +n-r}(V, \W)$.   
For $k=n$ and 
$\widetilde{\F} (v_{n+1}, z_{n+1}; \ldots;  v_{2n}, z_{2n})
= 
\F(v_1, z_1; \ldots;  v_n, z_n)$,     
and we obtain from \eqref{Z2n_pt_epsss0} that 
$\F(v_1, z_1; \ldots;  v_n, z_n)$ $\cdot$ 
 $\F(v_1, z_1; \ldots; v_n, z_n) =0$.   
Let us now introduce  
cohomology classes associated to grading-restricted vertex algebras.   
Usually, (e.g., \cite{G, CM, Ko}) cohomology classes  
 are introduced by means of   
an extra condition (in particular, the orthogonality condition) 
applied to differential forms, and leading to  
an integrability condition.  
 One calls a map $\F \in C_k^n(V, \W)$,      
closed if $\delta^n_k \F=0$.     
For $k \ge 1$, one calls it exact if there exists  
$\widetilde{\F} \in  C_{k-1}^{n+1}(V, \W)$, such that 
$\widetilde{\F}=\delta^n_k \F$.    
For $\F \in C^n_k(V, \W)$ we call the cohomology class of mappings 
 $\left[ \F \right]$ the set of all closed forms that differ from $\F$ by an  
exact mapping, i.e., for $\Lambda \in C^{n-1}_{k+1}(V, \W)$,  
$\left[ \F \right]= \F + \delta^{n-1}_{k+1} \Lambda$.    
In contrast to \cite{Ko}, our cohomology class is a functional of $v\in V$. 
That means that the actual functional form of $\F(v, z)$  
(and, therefore, $\langle w', \F \rangle$, for $w'\in W'$)       
 varies with various choices of $v\in V$. 
Under a natural extra condition, the double complexes \eqref{hat-complex} 
and \eqref{hat-complex-half}   
allow us to establish relations among elements of $C^n_m(V, \W)$-spaces.  
By analogy with the notion of the integrability for differential forms \cite{G}, 
 we use here the notion of the orthogonality for the spaces of a complex. 
For the complexes \eqref{hat-complex} and \eqref{hat-complex-half}  
let us require that for a pair of 
complex spaces $C_m^k(V, \W)$ and $C_{m'}^n(V, \W)$,  
 there exist subspaces 
$\widetilde{C}_m^k(V, \W)\subset C_m^k(V, \W)$, 
$\widetilde{C}_{m'}^n(V, \W)\subset C_{m'}^n(V, \W)$, 
 such that for all $\F \in \widetilde{C}_m^k(V, \W)$ and all 
$\widetilde{\F} \in \widetilde{C}_{m'}^n(V, \W)$,     
$\F \cdot \delta^n_{m'} \widetilde{\F}=0$.    
 Namely, $\F$ is supposed to be orthogonal  
to $\delta^n_{m'} \widetilde{\F}$ with respect to the new product.    
We call this the orthogonality condition for mappings of the double complexes 
\eqref{hat-complex} and \eqref{hat-complex-half}.
Note that in the case of differential forms considered on a smooth manifold, 
the Frobenius theorem for a distribution provides the orthogonality condition \cite{G}.   
The fact that both sides of the orthogonality condition  
 belong to the same double complex space, applies limitations 
to possible combinations of $(k, m)$ and $(n, m')$. 
Analogously to the proof given in \cite{Ko}, 
it turns out that the orthogonality condition   
applied to elements of complexes \eqref{hat-complex} and 
\eqref{hat-complex-half}  
gives non-vanishing cohomology classes 
$\left[\left(\delta^n_m  \F \right) \cdot \F  \right]$ 
independent on the choice of $\F \in C^n_m(V, \W)$, in particular, for 
$(n,m)=(1,2)$, $(0,3)$, $(1, t)$, $0 \le t \le 2$. 
\section*{Acknowledgment}
The author is supported by the Institute of Mathematics 
of the Academy of Sciences of the Czech Republic (RVO 67985840). 
I would like to thank two unanimous referees for very valuable comments and corrections. 
\section{Appendix: Grading-restricted vertex algebras and their modules}
\label{grading}
In this Section, following \cite{Huang},  
we recall basic properties of 
grading-restricted vertex algebras and their grading-restricted generalized 
modules. 
We work over the base field $\C$ of complex numbers. 
\begin{definition}
A vertex algebra   
$(V,Y_V,\mathbf{1}_V)$, (cf. \cite{K}),  consists of a $\Z$-graded complex vector space  
\[
V = \coprod_{n\in\Z}\,V_{(n)}, \quad \dim V_{(n)}<\infty, 
\]
 for each $n\in \Z$,   
and a linear map 
\[
Y_V:V\rightarrow {\rm End \;}(V)[[z,z^{-1}]], 
\]
 for a formal parameter $z$, and a  
distinguished vector $\mathbf{1}_V\in V$.    
The evaluation of $Y_V$ on $v\in V$ is the vertex operator 
\begin{equation}
\label{vop}
Y_V(v)\equiv Y_V(v,z) = \sum_{n\in\Z}v(n)z^{-n-1}, 
\end{equation}
with components 
$(Y_V(v))_n =v(n)\in {\rm End \;}(V)$, where $Y_V(v,z)\mathbf{1}_V = v+O(z)$. 
The vertex operators satisfy 
the vacuum axiom (the identity property)  
\[
Y_V ({\bf 1}_V , z) = {\rm Id}_V,
\]
 and 
 the locality axiom 
\[
(z - w)^N [Y_V (u_1, z), Y_V (u_2, w)] = 0, 
\]
 for some $N >> 0$,  all $u_1$, $u_2 \in V$. 
\end{definition}
\begin{definition}
\label{grares}
A grading-restricted vertex algebra satisfies 
the following conditions:
\begin{enumerate}
\item {Grading-restriction condition}:
$V_{(n)}$ is finite dimensional for all $n\in \Z$, and $V_{(n)}=0$ for $n\ll 0$; 

\item { Lower-truncation condition}:
For $u$, $v\in V$, $Y_V(u, z)v$ contains only finitely many negative  
power terms, that is, 
\[
Y_V(u, z)v\in V((z)), 
\] 
(the space of formal 
Laurent series in $z$ with coefficients in $V$);   
\item {Creation property}: For $u\in V$, 
\[
Y_V (u, z)\mathbf{1}_V\in V[[z]], 
\]  
and 
\[
\lim_{z\to 0}Y_V(u, z)\mathbf{1}_V= u; 
\]

\item {Duality}: 
For $u_1, u_2, v\in V$,  
\[
v'\in V'=\coprod_{n\in \mathbb{Z}}V_{(n)}',  
\]
where 
 $V_{(n)}'$ denotes 
the dual vector space to $V_{(n)}$ and $\langle\, . ,  .\rangle$ the evaluation 
pairing $V'\otimes V\to \C$, the series 
\begin{eqnarray}
\label{porosyata}
& & \langle v', Y_V(u_1, z_1)Y_V(u_2, z_2)v\rangle, 
\\
& & \langle v', Y_V(u_2, z_2)Y_V(u_1, z_1)v\rangle, 
\\
& & \langle v', Y_V (Y_V (u_1, z_1-z_2)u_2, z_2)v\rangle,  
\end{eqnarray}
are absolutely convergent
in the regions 
\[
|z_1|>|z_2|>0, 
\]
\[
|z_2|>|z_1|>0, 
\]
\[
|z_2|>|z_1 -z_2|>0, 
\]
 respectively, to a common rational function 
in $z_1$ and $z_2$ with the only possible poles at $z_1=0=z_2$ and   
$z_1=z_2$;  
\item {$L_V(0)$-bracket formula}: Let $L_V(0): V\to V$,  
be defined by 
\[
L_V(0)v=nv, \qquad n=\wt(v),  
\]
 for $v\in V_{(n)}$.  
Then
\begin{equation} 
\label{locomm}
[L_V(0), Y_V(v, z)]=Y_V(L_V(0)v, z)+z\frac{d}{dz}Y_V(v, z),  
\end{equation}
for $v\in V$. 
\item {$L_V(-1)$-derivative property}:  
Let 
\[
L_V(-1): V\to V,  
\]
 be the operator given by 
\[
L_V(-1)v=\res_z z^{-2}Y_V(v, z)\one_V=Y_{(-2)}(v) \one_V,  
\]
where $Y(-2)(v)$ is the $(-2)$-th vertex algebra mode of the 
vertex operator $Y(v, z)$  
for $v\in V$. Then for $v\in V$, 
\begin{equation}
\label{derprop}
\frac{d}{dz}Y_V(u, z)=Y_V(L_V(-1)u, z)=[L_V(-1), Y_V(u, z)]. 
\end{equation}
\end{enumerate}
\end{definition}
In addition to that,  we recall here the following definition (cf. \cite{BZF}): 
\begin{definition}
 A grading-restricted vertex algebra $V$ is called conformal of central 
charge $c \in \C$,
 if there exists a non-zero conformal vector (Virasoro vector) $\omega \in V_{(2)}$ such that the 
corresponding vertex operator 
\[
Y_V(\omega, z)=\sum_{n\in\Z}L_V(n)z^{-n-2}, 
\]
is determined by modes of Virasoro algebra $L_V(n): V\to V$ satisfying  
\[
[L_V(m), L_V(n)]=(m-n)L_V(m+n)+\frac{c}{12}(m^3-m)\delta_{m+b, 0}\; {\rm Id_V}. 
\]
\end{definition}
\subsection{Grading-restricted generalized $V$-module}
\begin{definition}
A {grading-restricted generalized $V$-module} is a vector space 
$W$ equipped with a vertex operator map 
\begin{eqnarray*}
Y_W: V\otimes W&\to& W[[z, z^{-1}]], 
\nn
u\otimes w&\mapsto & Y_W(u, w)\equiv Y_W(u, z)w=\sum_{n\in \Z}(Y_W)_n(u,w)z^{-n-1},  
\end{eqnarray*}
and linear operators $L_W(0)$ and $L_W(-1)$ on $W$ satisfying the following
conditions:
\begin{enumerate}
\item {Grading-restriction condition}:
The vector space $W$ is $\mathbb C$-graded, that is, 
\[
W=\coprod_{\alpha\in \mathbb{C}}W_{(\alpha)},
\]
 such that 
$W_{(\alpha)}=0$ when the real part of $\alpha$ is sufficiently negative; 
\item { Lower-truncation condition}:
For $u\in V$ and $w\in W$, $Y_W(u, z)w$ contains only finitely many negative 
power terms, that is, $Y_W(u, z)w\in W((z))$;  
\item {Identity property}:  
Let ${\rm Id}_W$ be the identity operator on $W$.  
Then 
\[
Y_W(\mathbf{1}_V, z)={\rm Id}_W;  
\] 
\item {Duality}: For $u_1$, $u_2 \in V$, $w\in W$,  
\[
w'\in W'=\coprod_{n\in \mathbb{Z}}W_{(n)}^*, 
\]
 $W'$ denotes 
the dual $V$-module to $W$ and $\langle\, .,. \rangle$ their evaluation 
pairing, the series 
\begin{eqnarray}
\label{porosyataw}
&& \langle w', Y_W(u_1, z_1)Y_W(u_2, z_2)w\rangle, 
\\ 
&& \langle w', Y_W(u_2, z_2)Y_W(u_1, z_1)w\rangle,  
\\
&& \langle w', Y_W(Y_V(u_1, z_1-z_2)u_2, z_2)w\rangle,  
\end{eqnarray}
are absolutely convergent
in the regions
\[ 
|z_1|>|z_2|>0,
\]
\[
|z_2|>|z_1|>0, 
\]
\[
|z_2|>|z_1-z_2|>0,
\]
 respectively, to a common rational function 
in $z_1$ and $z_2$ with the only possible poles at $z_1=0=z_2$ and  
$z_1=z_2$.  

\item { $L_W(0)$-bracket formula}: For  $v\in V$, 
\[
[L_W(0), Y_W(v, z)]=Y_W(L_V(0)v, z)+z\frac{d}{dz}Y_W(v, z); 
\]

\item { $L_W(0)$-grading property}: For $w\in W_{(\alpha)}$, there exists
$N \in \Z_+$ such that  
\begin{equation}
\label{gradprop}
(L_W(0)-\alpha)^Nw=0;  
\end{equation}

\item { $L_W(-1)$-derivative property}: For $v\in V$,
\begin{equation}
\label{derprop}
\frac{d}{dz}Y_W(u, z)=Y_W(L_V(-1)u, z)=[L_W(-1), Y_W(u, z)]. 
\end{equation}
\end{enumerate}
\end{definition}
The translation property of vertex operators 
\begin{equation}
\label{transl}
 Y_W(u, z) = e^{-z' L_W(-1)} Y_W(u, z+z') e^{z' L_W(-1)},  
\end{equation}
for $z' \in \C$, follows from 
 \eqref{derprop}.   
For $a\in \C$, 
 the conjugation property with respect to the grading operator $L_W{(0)}$ is given by 
\begin{equation}
\label{aprop}
 a^{ L_W{(0)} } \; Y_W(v,z) \; a^{-L_W{(0)} }= Y_W (a^{ L_W{(0)} } v, az).    
\end{equation} 
For $v\in V$, and $w \in W$, the intertwining operator 
\begin{eqnarray}
\label{interop}
&& Y_{WV}^W: V\to W,   
\nn
&&
v   \mapsto  Y_{WV}^W(w, z) v,     
\end{eqnarray}
 is defined by 
\begin{eqnarray}
\label{wprop}
Y_{WV}^W(w, z) v= e^{zL_W(-1)} Y_W(v, -z) w. 
\end{eqnarray}
\subsection{Group of automorphisms of formal parameters}
\label{perdozo}
Asume that $W$ is a quasi-conformal grading-restricted vertex algebra $V$-module.  
Let us recall some further facts from \cite{BZF} 
relating generators of Virasoro algebra with the group of  
automorphisms in complex dimension one. 
 Let us represent an element of ${\rm Aut}_z \; \Oo^{(1)}$ by the map   
\begin{equation}
\label{lempa}
z \mapsto \rho=\rho(z),
\end{equation}
given by the power series
\begin{equation}
\label{prostoryad}
\rho(z) = \sum\limits_{k \ge 1} a_k z^k, 
\end{equation}
$\rho(z)$ can be represented in an exponential form 
\begin{equation}
\label{rog}
f(z) = \exp \left(  \sum\limits_{k > -1} \beta_{k }\; z^{k+1} \partial_z \right) 
\left(\beta_0 \right)^{z \partial_z}.z, 
\end{equation}
where we express $\beta_k \in \mathbb C$, $k \ge 0$, 
through combinations of $a_k$, $k\ge 1$.  
 A representation of Virasoro algebra modes in terms 
of differential operators is given by \cite{K}  
\begin{equation}
\label{repro}
L_W(m) \mapsto - \zeta^{m+1}\partial_\zeta, 
\end{equation}
for $m \in \Z$. 
 By expanding \eqref{rog} and comparing to 
\eqref{prostoryad} we obtain a system of equations which,  
 can be solved recursively for all $\beta_k$. 
In \cite{BZF}, $v \in V$, they derive the formula  
\begin{eqnarray}
\label{infaction}
&&
\left[L_W(n), Y_W (v, z) \right] 
=  \sum_{m \geq -1}  
 \frac{1}{(m+1)!} \left(\partial^{m+1}_z z^{m+1}  \right)\;   
Y_W (L_V(m) v, z),    
\end{eqnarray}
of a Virasoro generator commutation with a vertex operator. 
Given a vector field 
\begin{equation}
\label{top}
\beta(z)\partial_z= \sum_{n \geq -1} \beta_n z^{n+1} \partial_z, 
\end{equation}
which belongs to local Lie algebra of ${\rm Aut}\; \Oo^{(1)}$, 
 one introduces the operator 
\[
\beta = - \sum_{n \geq -1} \beta_n L_W(n). 
\]
We conclude from \eqref{top} with the following 
\begin{lemma}
\begin{eqnarray}
\label{infaction000}
&&
\left[\beta, Y_W (v, z) \right] 
=  - \sum_{m \geq -1}  
 \frac{1}{(m+1)!} \left(\partial^{m+1}_z \beta(z)  \right)\;  
Y_W (L_V(m) v, z).   
\end{eqnarray}
\end{lemma}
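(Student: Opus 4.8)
The plan is to obtain \eqref{infaction000} from the single-generator commutator formula \eqref{infaction} purely by linearity, introducing no new analytic input. First I expand the operator $\beta$ through its definition $\beta=-\sum_{n\geq -1}\beta_n L_W(n)$ and invoke bilinearity of the commutator to write
\[
\left[\beta, Y_W(v,z)\right]=-\sum_{n\geq -1}\beta_n\left[L_W(n), Y_W(v,z)\right].
\]
Into each summand I then substitute the known expression \eqref{infaction} for $\left[L_W(n), Y_W(v,z)\right]$, producing a double series indexed by $n\geq -1$ and $m\geq -1$.

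The central step is to interchange the two summations and to pull the linear differential operator $\partial_z^{m+1}$ outside the sum over $n$. Since $\partial_z^{m+1}$ acts term by term and commutes with the summation over $n$, the $n$-sum reassembles the coefficients $\beta_n$ into the vector field $\beta(z)=\sum_{n\geq -1}\beta_n z^{n+1}$ of \eqref{top}, yielding
\[
\left[\beta, Y_W(v,z)\right]=-\sum_{m\geq -1}\frac{1}{(m+1)!}\left(\partial_z^{m+1}\beta(z)\right)Y_W(L_V(m)v,z),
\]
which is exactly \eqref{infaction000}.

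The only point requiring care, and hence the main (if mild) obstacle, is the legitimacy of reordering the double sum and of differentiating term by term in the formal setting. I expect this to follow from the lower-truncation conditions on $Y_W$: for each fixed power of $z$ in the resulting formal series only finitely many pairs $(n,m)$ contribute, so all rearrangements and the passage of $\partial_z^{m+1}$ through the $n$-summation are justified without any convergence estimate. Thus the lemma reduces to a formal manipulation of \eqref{infaction} together with the identification of $\beta(z)$ from \eqref{top}.
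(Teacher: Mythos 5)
Your proof is correct and is essentially the argument the paper intends: the paper gives no separate derivation but presents the lemma as an immediate consequence of the single-generator formula \eqref{infaction} and the definition $\beta=-\sum_{n\geq -1}\beta_n L_W(n)$, which is exactly the linearity-plus-resummation you carry out. Two small remarks: \eqref{infaction} as printed has a typo (the factor $z^{m+1}$ should be $z^{n+1}$, depending on the Virasoro index $n$), and your reassembly of the $n$-sum into $\beta(z)$ implicitly uses this corrected form; also, the interchange of sums is even easier than your lower-truncation argument suggests, since for fixed $v$ one has $L_V(m)v=0$ for all sufficiently large $m$ (the positive part acts locally nilpotently in the quasi-conformal setting), so the $m$-sum is finite and only the formal identification $\sum_{n}\beta_n z^{n+1}=\beta(z)$ is needed.
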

Here we introduce the following definitions. 
\begin{definition}
We call a grading-restricted vertex algebra quasi-conformal if 
  it carries an action of ${\rm Der}\; \Oo^{(n)}$  
 such that commutation formula \eqref{infaction000}    
 holds for any 
$v \in  V$,  and $z=z_j$, $1 \le j \le n$, the element $L_V(-1) = - \partial_z$    
acts as the translation operator 
\[
L_V(0) = - z \partial_z,  
\]
 acts semi-simply with integral
eigenvalues, and the Lie subalgebra ${\rm Der}_+ \; \Oo^{(n)}$ acts locally nilpotently. 
\end{definition}
\begin{definition}
\label{primary}  
A vector $A$ which belongs to a quasi-conformal 
grading-restricted vertex algebra $V$ 
 is called 
primary of conformal dimension $\Delta(A) \in  \mathbb Z_+$ if  
\begin{eqnarray*}
L_V(k) A &=& 0,\;  k > 0, 
\nn
 L_W(0) A &=& \Delta(A) A. 
\end{eqnarray*}
\end{definition}
The formula \eqref{infaction000}  is used in \cite{BZF} 
in order to prove invariance of 
vertex operators multiplied by conformal weight differentials in case of primary states, and 
in generic case.  

Let us give some further definitions: 
\begin{definition}
A conformal grading-restricted vertex algebra  
is a conformal vertex algebra $V$, 
such that it module $W$ is equipped with an action of the 
Virasoro algebra and hence its Lie subalgebra ${\rm Der}_0 \; \Oo^{(n)}$
given by the Lie algebra of ${\rm Aut} \; \Oo^{(n)}$.  
\end{definition}
\begin{definition}
\label{quasiconf}
A grading-restricted vertex algebra $V$-module $W$ is called quasi-conformal if 
  it carries an action of local Lie algebra of ${\rm Aut}\; \Oo$  
such that commutation formula
 \eqref{infaction000}    
 holds for any 
$v \in  V$, the element $L_W(-1) = - \partial_z$, 
as the translation operator $T$,
\[
L_W(0) = - z \partial_{z},  
\]
  acts semi-simply with integral
eigenvalues, and the Lie subalgebra 
of the positive part of local Lie algebra of ${\rm Aut}\; \Oo^{(n)}$ 
 acts locally nilpotently. 
\end{definition}
Recall \cite{BZF} the exponential form $f(\zeta)$ \eqref{rog} 
of the coordinate transformation \eqref{lempa}  
$\rho(z) \in {\rm Aut}\; \Oo^{(1)}$. 
A quasi-conformal vertex algebra has the formula \eqref{infaction000}, thus  
it is possible 
by using the identification \eqref{repro}, to introduce the linear operator 
 representing $f(\zeta)$ \eqref{rog} on $\W_{z_1, \ldots, z_n}$,  
\begin{equation}
\label{poperator}
 P\left(f (\zeta) \right)= 
\exp \left( \sum\limits_{m >  0} (m+1)\; \beta_m \; L_V(m) \right) \beta_0^{L_W(0)},  
\end{equation}
 (note that we have a different normalization in it). 
In \cite{BZF} it was shown that   
the action of an operator similar to \eqref{poperator} 
on a vertex algebra element $v\in V_n$ contains finitely many terms, and 
subspaces 
\[
V_{\le m} = \bigoplus_{ n \ge K}^m V_n, 
\]
 are stable under all operators $P(f)$, $f \in  {\rm Aut}\; \Oo^{(1)}$.   
 In \cite{BZF} they proved the following 
\begin{lemma}
 The assignment
\[
 f \mapsto P(f), 
\]
 defines a representation of ${\rm Aut}\; \Oo^{(1)}$
on $V$, 
\[
 P(f_1 * f_2) = P(f_1) \; P(f_2),  
\]
 which is the inductive limit of the
representations $V_{\le m}$, $m\ge K$ with some $K$ for some product $*$ of $f_1$ and $f_2$.  
\end{lemma}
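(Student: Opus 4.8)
The plan is to realize $P$ as the integration of the Lie-algebra action $\beta(z)\partial_z \mapsto -\sum_{n\ge 0}\beta_n L_V(n)$ of ${\rm Der}_0\; \Oo^{(1)}$, and to exploit the semidirect-product structure of ${\rm Aut}\; \Oo^{(1)}$. First I would record well-definedness: on each finite piece $V_{\le m}$ the positive modes $L_V(n)$, $n\ge 1$, strictly lower the $L_V(0)$-grading and hence act nilpotently, so the exponential in \eqref{poperator} truncates to a finite sum and $P(f)$ is a well-defined invertible operator preserving the filtration. Together with the stability of the $V_{\le m}$ recalled just above, this already yields the last assertion, namely that $P$ is the inductive limit of its restrictions to the $V_{\le m}$, $m\ge K$.

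Next I would reduce the homomorphism property $P(f_1 * f_2)=P(f_1)P(f_2)$ to a Lie-algebraic statement. Writing ${\rm Aut}\; \Oo^{(1)}=\mathbb{G}_m \ltimes {\rm Aut}_+\; \Oo^{(1)}$, where $\mathbb{G}_m$ consists of the scalings $z\mapsto \beta_0 z$ and ${\rm Aut}_+\;\Oo^{(1)}$ is the pro-unipotent group of series tangent to the identity, the factorization \eqref{rog} of $\rho$ matches the factorization of \eqref{poperator} into the unipotent exponential $\exp\left(\sum_{m>0}(m+1)\beta_m L_V(m)\right)$ and the scaling $\beta_0^{L_W(0)}$: the scaling factor is carried to $\beta_0^{L_W(0)}$ and the unipotent factor to the exponential of positive modes. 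It then suffices to treat the two factors separately and to check their compatibility under the semidirect product, which is governed by the adjoint relation $\beta_0^{L_V(0)}L_V(m)\beta_0^{-L_V(0)}=\beta_0^{-m}L_V(m)$, itself a consequence of the bracket $[L_V(0),L_V(m)]=-m\,L_V(m)$ in \eqref{locomm}.

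The core step is to verify that $\beta(z)\partial_z \mapsto -\sum_{n\ge 0}\beta_n L_V(n)$ is a homomorphism from ${\rm Der}_0\; \Oo^{(1)}$, with bracket $[z^{m+1}\partial_z,\,z^{n+1}\partial_z]=(m-n)\,z^{m+n+1}\partial_z$, into $\mathrm{End}(V)$. This follows from the Virasoro commutation relations for the modes $L_V(n)$: for $m,n\ge 0$ one has $m+n\ge 0$, so the central term $\frac{c}{12}(m^3-m)\delta_{m+n,0}$ is forced to vanish, and the nonnegative modes realize ${\rm Der}_0\;\Oo^{(1)}$ faithfully and without any central obstruction. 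The infinitesimal compatibility of this action with vertex operators is exactly the commutation formula \eqref{infaction000}. Since the positive part acts locally nilpotently and $L_V(0)$ semisimply, on each finite piece $V_{\le m}$ the exponential map intertwines this Lie-algebra homomorphism with the sought group homomorphism; comparing the Baker--Campbell--Hausdorff expansion of the exponential coordinates $\beta_k$ of $f_1 * f_2$ on the geometric side with that of the operator product $P(f_1)P(f_2)$, the two agree because both are the image, under the same homomorphism, of equal elements of ${\rm Der}_0\;\Oo^{(1)}$. Passing to the inductive limit over $m$ gives $P(f_1 * f_2)=P(f_1)P(f_2)$ on all of $V$.

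The main obstacle I anticipate is the bookkeeping in this last step: because $P$ is defined through the factored exponential form \eqref{rog} rather than through a single exponential of the Lie-algebra element attached to $f$, one must check that re-expanding the composite $f_1 * f_2$ into its scaling and unipotent factors reproduces the operator product exactly, i.e. that the factorization is multiplicative. This is precisely where the semidirect-product structure and the faithfulness of the Lie-algebra homomorphism carry the essential weight, and where the finiteness guaranteed by stability of the $V_{\le m}$ makes every exponential and every rearrangement legitimate.
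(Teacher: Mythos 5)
You should note first that the paper itself contains no proof of this lemma: it is stated as a result imported from \cite{BZF} (``In \cite{BZF} they proved the following''), and the same citation pattern recurs for the $n$-dimensional analogue inside Appendix \ref{proof}. Your argument is therefore not being compared against an internal proof but against the standard one in the cited source, and it reconstructs that proof essentially verbatim: local nilpotence of the positive modes and semisimplicity of $L_V(0)$ with integral eigenvalues make \eqref{poperator} a well-defined invertible operator on each finite-dimensional piece $V_{\le m}$; the decomposition of ${\rm Aut}\;\Oo^{(1)}$ into scalings and the pro-unipotent part matches the two factors of \eqref{rog} and \eqref{poperator}; and multiplicativity follows from the fact that $z^{n+1}\partial_z\mapsto -L_V(n)$ is a Lie-algebra homomorphism, together with Baker--Campbell--Hausdorff on each $V_{\le m}$ and passage to the inductive limit. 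So your route is the same as that of the source on which the paper leans.

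Three small repairs you should make. First, the Witt bracket is $[z^{m+1}\partial_z, z^{n+1}\partial_z]=(n-m)\,z^{m+n+1}\partial_z$, not $(m-n)$; this sign is precisely what makes the assignment with the minus sign of \eqref{repro} a homomorphism rather than an anti-homomorphism, so as literally written your core verification would fail by a sign. Second, the central term does not vanish ``because $m+n\ge 0$''; it vanishes because $\delta_{m+n,0}$ with $m,n\ge 0$ forces $m=n=0$, and then $m^{3}-m=0$. Third, faithfulness of the Lie-algebra action is irrelevant (and need not hold in general); nothing in the BCH comparison uses it, only the homomorphism property plus the finiteness coming from stability of the subspaces $V_{\le m}$.
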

Similarly, \eqref{poperator} provides a representation operator on $\W_{z_1, \ldots, z_n}$. 
\subsection{Non-degenerate invariant bilinear form on $V$} 
In this Subsection we recall \cite{TZ} the notion of 
a non-degeneerate invariant bilinear form defined on $V$.  
\label{liza}
The subalgebra 
\[
\{L_V(-1),L_V(0),L_V(1)\}\cong SL(2,\mathbb{C}), 
\]
 associated with M\"{o}bius transformations on  
$z$ naturally acts on $V$,   (cf., e.g. \cite{K}). 
In particular, for $\lambda \in \C$, 
\begin{equation}
\gamma_{\lambda}=\left(
\begin{array}{cc}
0 & \lambda\\
-\lambda & 0\\	
\end{array}
\right)
:z\mapsto w=-\frac{\lambda^{2}}{z},
 \label{eq: gam_lam}
\end{equation}
is generated by 
\[
T_\lambda= \exp\left(\lambda L_V{(-1)}\right)  
\; \exp\left(\lambda^{-1}L_V(1)\right) \; \exp\left(\lambda L_V(-1)\right),   
\]
 where
\begin{equation}
T_\lambda Y(u,z)T_\lambda^{-1}=  
Y\left(\exp \left(-\frac{z}{\lambda^{2}}L_V(1)\right)
\left(-\frac{z}{\lambda}\right)^{-2L_V(0)}u,-\frac{\lambda^{2}}{z}\right).  
\label{eq: Y_U}
\end{equation}
In our considerations (cf. Appendix \ref{sphere}) of the Riemann sphere  
sewing, we use in particular,  
the M\"{o}bius map 
\[
z\mapsto z'= \epsilon/z,
\] 
 associated with the sewing condition \eqref{pinch} with 
\begin{equation}
\lambda=-\xi\epsilon^{\frac{1}{2}},
\label{eq:lamb_eps}
\end{equation}  
and $\xi\in\{\pm \sqrt{-1}\}$. 
The adjoint vertex operator \cite{K, FHL}    
is defined by    
\begin{equation}
Y^\dagger(u,z)=\sum_{n\in \Z}u^\dagger (n)z^{-n-1}  
= T_\lambda Y(u,z)T_\lambda^{-1}. 
\label{eq: adj op}
\end{equation}
A bilinear form $\langle . , . \rangle_{\lambda}$ on $V$ is 
invariant if for all $a$, $b$, $u\in V$,  
\begin{equation}
\langle Y(u,z)a,b\rangle_{\lambda} = 
\langle a,Y^\dagger(u,z)b\rangle_{\lambda},  
\label{eq: inv bil form}
\end{equation}
i.e., 
\[
 \langle u(n)a,b\rangle_{\lambda} = 
\langle a,u^\dagger(n)b\rangle_\lambda. 
\]
Thus, it follows that 
\begin{equation}
\label{dubay}
\langle L_V(0)a,b\rangle_{\lambda} =\langle a,L_V(0)b\rangle_\lambda, 
\end{equation}
 so that 
\begin{equation}
\label{condip}
\langle a,b\rangle_{\lambda} =0, 
\end{equation}
  if $wt(a)\not=wt(b)$ for homogeneous $a$, $b$. 
 One also finds 
\[
\langle a,b\rangle_\lambda = \langle b,a \rangle_\lambda, 
\]
 and it is non-degenerate if and only if $V$ is simple.  
 Given any $V$ basis $\{ u^\alpha \}$ we define the 
dual $V$ basis $\{ \overline{u}^{\beta}\}$ where  
$\langle u^\alpha ,\overline{u}^\beta\rangle_\lambda=\delta^{\alpha\beta}$.  
\section{Appendix: A sphere formed from sewing of two spheres} 
\label{sphere}
 In this appendix we recall some facts from \cite{TZ}. 
The matrix element for a number of vertex operators of a vertex algebra
 is usually associated \cite{FHL, FMS, TUY}  
with a vertex algebra character on a sphere. 
We extrapolate this notion to the case of $\W_{z_1, \ldots, z_n}$ spaces.  
In Section \ref{product} we explained that a space $\W_{z_1, \ldots, z_n}$
can be associated with a Riemann sphere with marked points. 
 The product of two such spaces is then associated 
with a sewing of two such spheres with a number of marked  
points 
and extra points with local coordinates identified with formal parameters of 
$\W_{x_1, \ldots, x_k}$ and $\W_{y_1, \ldots, y_n}$.  
In order to supply an appropriate geometric construction for the product,  
 we use the $\epsilon$-sewing procedure (described in this Appendix) for two initial spheres
 to obtain a matrix element associated with \eqref{gendef}. 
\begin{remark}
In addition to the $\epsilon$-sewing procedure of 
two initial spheres, one can alternatively use  
the self-sewing procedure \cite{Y} of the sphere to get the torus.  Then by 
sending parameters 
to an appropriate limit by shrinking the genus to zero. 
As a result, one obtains again the sphere but with a  
different parameterization. 
In this paper we focus on the  
$\epsilon$-formalizm only.   
\end{remark}
In our particular case of $\W$-valued rational functions  
obtained from matrix elements \eqref{deff} we take two 
Riemann spheres $\Sigma^{(0)}_a$, $a=1$, $2$, 
as two initial auxiliary  
 spaces. 
The resulting 
space is formed by 
the sphere $\Sigma^{(0)}$ obtained by the sewing procedure of $\Sigma^{(0)}_a$.  
The formal parameters $(x_1, \ldots, x_k)$ and $(y_1, \ldots, y_n)$ are identified with  
local coordinates of $k$ and $n$ points 
on two initial spheres $\Sigma^{(0)}_a$, $a=1$, $2$ correspondingly.   
In the $\epsilon$ sewing procedure, some $r$ points parameters  
among 
$(p_1, \ldots, p_k)$ 
may coincide with 
points 
among $(p'_1, \ldots, p'_n)$ 
when we identify the annuluses \eqref{zhopki} below.    
This corresponds to the singular case of coincidence of $r$ formal parameters. 

Consider the sphere formed by sewing together 
two initial spheres in the sewing scheme referred to  
as the $\epsilon$-formalism in \cite{Y}.  
Let $\Sigma_a^{(0)}$,   
$a=1$, $2$ 
be two initial spheres.  
Introduce a complex sewing
parameter $\epsilon$ where 
\[
|\epsilon |\leq r_1r_2, 
\]
Consider $k$ distinct points on $p_i \in  \Sigma_1^{(0)}$, $i=1, \ldots, k$, 
with local coordinates $(x_1, \ldots, x_k) \in F_k\C$,   
and distinct points $p_j \in  \Sigma_2^{(0)}$, $j=1, \ldots, n$, 
with local coordinates $(y_1, \ldots , y_n)\in F_n \C$,   
with
\[
\left\vert x_i\right\vert 
\geq |\epsilon |/r_2, 
\]
\[
\left\vert y_i \right\vert \geq |\epsilon |/r_1.  
\] 
Choose a local coordinate $z_a \in \mathbb{C}$  
on $\Sigma^{(0)}_a$ in the
neighborhood of points $p_{a}\in\Sigma^{(0)}_a$, $a=1$, $2$. 
Consider the closed disks 
$\left\vert \zeta_a \right\vert \leq r_a$,    
 and excise the disk 
\begin{equation}
\label{disk}
\{
\zeta_a, \; \left\vert \zeta_a\right\vert \leq |\epsilon |/r_{\overline{a}}\}\subset 
\Sigma^{(0)}_a, 
\end{equation}
%
to form a punctured sphere  
\begin{equation*}
\widehat{\Sigma}^{(0)}_a=\Sigma^{(0)}_a \backslash \{\zeta_a,\left\vert  
\zeta_a\right\vert \leq |\epsilon |/r_{\overline{a}}\}.
\end{equation*}
We use the convention 
\begin{equation}
\overline{1}=2,\quad \overline{2}=1.  
\label{bardef}
\end{equation}
Define the annulus
\begin{equation}
\label{zhopki}
\mathcal{A}_a=\left\{\zeta_a,|\epsilon |/r_{\overline{a}}\leq \left\vert 
\zeta_a\right\vert \leq r_a\right\}\subset \widehat{\Sigma}^{(0)}_a,
\end{equation}
and identify $\mathcal{A}_1$ and $\mathcal{A}_2$ as a single region  
$\mathcal{A}=\mathcal{A}_1\simeq \mathcal{A}_2$ via the sewing relation  
\begin{equation}
\label{pinch}
\zeta_1\zeta_2=\epsilon.    
\end{equation}
In this way we obtain a genus zero compact Riemann surface 
\[
\Sigma^{(0)}=\left\{ \widehat {\Sigma}^{(0)}_1
\backslash \mathcal{A}_1 \right\} 
\cup \left\{\widehat{\Sigma}^{(0)}_2 \backslash  
\mathcal{A}_2\right\}\cup \mathcal{A}. 
\] 
This the sphere form a suitable geometric model 
for the construction of a product of $\W$-valued rational forms 
in Section \ref{product}.  
\section{Appendix: a proof of Proposition \ref{ndimwinv}}  
\label{proof}
In this Appendix we give a proof of 
Proposition \ref{nezc},  
namely, we prove that Definition \ref{wspace} 
is independent of 
the choice of formal parameters. 
Let us first recall definitions required for that. 
Let
 \[
{\rm Aut}_{z_1, \ldots, z_n} \; \Oo^{(n)} ={\rm Aut}_{\mathbb{C}}[[z_1,...,z_n]],   
\]
be the group of formal 
automorphisms 
of $n$-dimensional formal power series algebra ${\mathbb{C}}[[z_1, \ldots, z_n]]$. 
 
Let $W$ be a quasi-conformal module for a grading-restricted vertex algebra $V$. 
The $\mathbb Z$-grading on $W$ is bounded from below, 
\[ 
 W = \bigoplus_{k > k_0 }  W_k,  
\] 
for some $k_0 \in \Z$.   
Since the vector fields $z^{k+1} \partial_z$ with $k\in{\mathbb N}$  
acts on $W$ as the operators $-L_W(k)$ of degree $-k$, 
the action of the Lie subalgebra 
${\rm Der}_+ \;\Oo^{(n)}$ is locally nilpotent.  
 Furthermore, $z\partial_z$ acts as the grading operator $L_W(0)$, which 
is diagonalizable with integral eigenvalues. 
Thus, the action of ${\rm Der} \;\Oo^{(n)}$ 
on a conformal vertex algebra $V$ can be exponentiated  
to an action of ${\rm Aut}_{z_1, \ldots, z_n}  \; \Oo^{(n)}$. 

We write an element of ${\rm Aut}_{z_1, \ldots, z_n} \; \Oo^{(n)}$ as  
\begin{eqnarray*}
 (z_1, \ldots, z_n) \to \rho&=&(\rho_1, \ldots, \rho_n), 
\nn
\rho_i&=&\rho_i(z_1, \ldots, z_n),  
\end{eqnarray*} 
for $i=1, \ldots, n$, 
 where $\rho_i$ are defined by elements of $\mathfrak{m}\in\Oo^{(n)}$ 
\begin{eqnarray}
\label{prostoryad}
\rho_i(z_1, \ldots, z_n) = 
\sum\limits_{{i_1\geq 0, \ldots, i_n\geq 0, \atop \sum_{j=1}^ni_j \geq 1}} 
a_{(i_1, \ldots,  i_k)} z_1^{i_1} \ldots z_k^{i_k}, 
 \;\; a_{(i_1, \ldots, i_k)}\in{\mathbb C},  
\end{eqnarray}
and the images of $\rho_i$, $i=1, \ldots, n$, in the finite dimensional ${\mathbb C}$-vector 
space $\mathfrak{m}/\mathfrak{m}^2$ are linearly independent.
Let us denote 
\begin{eqnarray*}
{\bf v} &=& \left( v_1 \otimes \ldots \otimes   v_n\right),  
\\
{\bf z} &=& \left( z_1, \ldots, z_n\right), 
\\
w_i &=&\rho_i(z_1, \ldots, z_n), 
\\ 
{\bf w} &=& (w_1, \ldots, w_n).  
\end{eqnarray*}
The natural object that turns to be invariant with respect to the action of the group 
${\rm Aut}_{z_1, \ldots, z_n} \; \Oo^{(n)}$ 
is given by the matrix element 
of the $n$-vector 
\begin{equation}
\label{overphi}
\langle w', \overline{\Phi} \left({\bf v}, {\bf z} \;  {\bf dz} \right)\rangle  = \langle w',
\left[ 
\Phi \left( v_1, z_1 \; dz_{i(1)};  \ldots;    
v_n, z_n \;  dz_{i(n)}\right)  \right] \rangle,    
\end{equation}
containing $n$ $\F$-entries,     
where $i(j)$ denotes the cyclic permutation of $(1, \ldots, n)$ starting with $j$.  
 In this paper we use \eqref{bomba} which is related to \eqref{overphi}. 
Due to \eqref{loconj}, \eqref{overphi} can be written in the form  
\begin{eqnarray*}
\langle w', \overline{\Phi} \left( {\bf dz} \; {\bf v} \right)
 \left( {\bf z} \right) \rangle   
&=& \langle w', 
\left[  
\left( dz_{i(J)} \right)^{-L(0)_W} \Phi \left(
 \left( \left(  dz_{i(J)} \right)^{L_0^{(V)}} 
{\bf v} \right) \left( {\bf z} \right)  \right) 
\right] \rangle  
\nn
&=& 
\langle w',
\left[ 
\left( dz_{i(J)} \right)^{-L(0)_W} \Phi \left( \left(dz_{i(J)} \right)^{\wt(v_{J})} 
{\bf v} \left({\bf z} \right) \right)
\right] \rangle,
\end{eqnarray*}
coherent with the one-dimensional case of \cite{BZF} and containing $\wt(v)$-differentials. 
The idea to use  torsors \cite{BZF} is to represent the action of $\rho$ \eqref{prostoryad} 
 of the group ${\rm Aut}_{z_1, \ldots, z_n}\; \Oo^{(n)}$ 
 on formal parameters ${\bf z}$ in vectors $({\bf v}, {\bf z})$ to 
the action by $V$-operators on vertex algebra states ${\bf v}$.  
Recall the standard representation of the Virasoro mode \cite{K}  
\[
z_j^{m+1} \partial_{z_j}      
\mapsto -  L_m, \;\; m \in \Z. 
\]
In order to represent the action of the group 
${\rm Aut}_{z_1, \ldots, z_n} \; \Oo^{(n)}$ on the variables 
 $(z_1, \ldots, z_n)$ of 
 $\overline{\F}$ \eqref{overphi} on $(v_1, \ldots, v_n)$, 
we have to transfer (as in $n=1$ case of \cite{BZF}) 
 to an exponential form of \eqref{prostoryad}. 
The coefficients $\beta^{(j)}_{r_1, \ldots, r_n} \in \C$ are recursively   
found \cite{GR} in terms of coefficients $a^{(i)}_{r_1, \ldots, r_n}$ of \eqref{prostoryad}.  
We introduce the linear operators   
\[
R (\rho_1, \ldots, \rho_n):  V^{\otimes n} \;\to \; V^{\otimes n},     
\] 
and define the action  
\begin{equation}
\label{act}
\overline{ \Phi} \left( {\bf v}, {\bf w} \; {\bf dw } \right) = 
{\rm R} (\rho_1, \ldots, \rho_n)\; \overline{ \Phi }  
 \left(  {\bf v}, {\bf z} \; {\bf dw} \right). 
\end{equation}
\begin{proof}
Consider the vector  
\begin{equation}
\label{overphi}
\overline{\Phi}= \left[  
\Phi \left( 
v_1, w_1 \; dw_{{\it i}(1)}; \ldots;  
v_n,  w_n\; dw_{i(n)} \right)  \right],  
\end{equation}
with primary $(v_1, \ldots, v_n)$.  
Note that 
\begin{equation}
\label{shifty}
dw_j=\sum\limits_{i=1}^n \frac{\partial \rho_j}{\partial z_i } dz_i, 
\quad \partial_{z_i} \rho_j =\frac{\partial \rho_j}{\partial z_i },  
\end{equation}  
(as in \cite{BZF}, we skip the complex conjugated part $d\overline{z}_i$). 
By definition \eqref{act} of the action of ${\rm Aut}_{z_1, \ldots, z_n}\; \Oo^{(n)}$, and  
due to \eqref{shifty} by rewriting $dw_i$,
we have  
\begin{eqnarray*}
&&  \overline{\Phi} ( 
{\bf v}, {\bf w} \; {\bf dw})   
=  {\rm R}(\rho_1, \ldots, \rho_n) \;  
\left[  {\Phi} \left( v_1, z_1 \; dw_{i(1)};  \ldots;   
v_n, 
 z_n \; dw_{i(n)}  \right)
\right]
\\
 && \qquad =  {\rm R}(\rho_1, \ldots, \rho_n) \; 
\\
&&
\qquad  \qquad\left[  {\Phi} \left( v_1, z_1 \;  
\sum\limits_{j=1}^n \partial_j \rho_{i(1)} \; dz_j; \ldots;  
v_n,  z_n \; \sum\limits_{j=1}^n \partial_j \rho_{i(n)} \; dz_j      
\right) 
\right]. 
\end{eqnarray*}
 By using \eqref{lder1} and linearity of the mapping $\Phi$,  
 we obtain from the last equation 
\begin{equation}
\label{norma}
\overline{\Phi} \left({\bf v}, {\bf w} \;{\bf dw}\right)  
=
 \left[
 {\Phi} \left(  
v_1, z_1 \; dz_{i(1)}; \ldots;  
v_n, z_n  \;dz_{i(n)} \right)    
\right], 
\end{equation}
with  
\begin{equation}
\label{rovno} 
{\rm R} (\rho_1, \ldots, \rho_n)= 
\left[ \widehat \partial_{J} \rho_{i(I)}\right]
=
\left[
\begin{array}{c}
\widehat \partial_{J} \rho_{i_1(I)}
\\
\widehat \partial_{J } \rho_{i_2(I)} 
\\
\ldots  
\\
\widehat \partial_{J} \rho_{i_n(I)}    
\end{array}
\right]. 
\end{equation}
The index operator $J$ takes the value of index $z_j$ of arguments in the vector \eqref{norma}, 
while the index operator $I$ takes values of index of differentials $dz_i$ in each entry of 
the vector $\overline{\Phi}$ \eqref{overphi}. 
 Thus, the index operator 
$i(I)=(i_I, \ldots, i_n(I))$       
is given by consequent cycling permutations of $I$. 
Taking into account the property \eqref{lder1},  
we define the operator 
\begin{equation}
\label{hatrho}
\widehat\partial_J \rho_a =   
\exp\left(   - \sum\limits_{ {(r_1 \ldots r_n), \quad 
\sum\limits_{i=1}^n r_i  \ge 1 }}  
r_J\; \beta^{(a)}_{r_1, \ldots, r_n}\; \zeta^{r_1}_1 \;  \ldots \; 
 \zeta^{r_J}_J \ldots \zeta^{r_n}_n \;  L_{(W)}(-1)  \right), 
\end{equation}
which contains index operators $J$ as index of a  
dummy variable $\zeta_J$ turning into $z_j$, $j=1, \ldots, n$.  
\eqref{hatrho} acts on each argument of mappins $\Phi$ 
in the vector $\overline{\Phi}$ \eqref{overphi}.   
Due to the definition of a grading-restricted vertex algebra, 
the action of operators $R\left(\rho_1, \ldots , \rho_n\right)$ for
$i=1, \ldots, n$, on $v \in V$ results in a sum of finitely many terms.
Similar to \cite{BZF}, for $n=1$, one proves  
\begin{lemma}
The mappings   
$\rho(z_1, \ldots, z_n ) \mapsto R
\left(\rho_1, \ldots, \rho_n \right)$,   
for $i$, $j=1, \ldots, n$, 
 define a representation of ${\rm Aut}_{z_1, \ldots, z_n} \; \Oo^{(n)}$ 
on $V^{\otimes^n}$ by  
\[
{\rm R}
\left(\rho \circ \rho'\right) = {\rm R} 
\left(\rho\right) \; {\rm R} 
\left(\rho'\right),  
\]
for $\rho$, $\rho' \in {\rm Aut}_{z_1, \ldots, z_n} \; \Oo^{(n)}$.
\end{lemma}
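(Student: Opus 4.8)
The plan is to follow the one-dimensional template of \cite{BZF}, where the analogous statement is established, and to promote it to the $n$-dimensional setting by treating the matrix $R(\rho)=[\widehat\partial_J \rho_{i(I)}]$ of \eqref{rovno} entrywise. First I would record that $R(\rho)$ is well defined as an operator on $V^{\otimes n}$: by the grading-restriction condition together with the local nilpotency of $\mathrm{Der}_+\,\Oo^{(n)}$, each exponential \eqref{hatrho} truncates to a finite sum when applied to a fixed homogeneous vector, exactly as in the representation lemma quoted from \cite{BZF} just above. This guarantees that both $R(\rho\circ\rho')$ and the composite $R(\rho)\,R(\rho')$ are defined on all of $V^{\otimes n}$, so the identity to be proved makes sense.

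The conceptual heart of the argument is that $R(\rho)$ is constructed, through the defining relation \eqref{act}, precisely so as to implement the change of formal coordinates $(z_1,\ldots,z_n)\mapsto(\rho_1,\ldots,\rho_n)$ on the rational forms $\overline\Phi$. Thus the identity $R(\rho\circ\rho')=R(\rho)\,R(\rho')$ is the functorial statement that performing the substitution $\rho'$ and then $\rho$ agrees with performing the single substitution $\rho\circ\rho'$. To turn this into a computation I would apply \eqref{act} twice, writing $\mathbf w=(\rho'_1,\ldots,\rho'_n)$ and then evaluating $\rho$ at $\mathbf w$: the transformation of the differentials is governed by the chain rule \eqref{shifty}, so that the relevant Jacobian matrices multiply, while the transformation of the arguments $z_i$ is governed by the $L_V(-1)$-derivative property \eqref{lder1}. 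Because the operators \eqref{hatrho} are all built from the single translation generator $L_W(-1)$ acting on the respective tensor slots, they commute, and the exponentials combine by adding their exponents; the additivity of exponents is then matched against the Taylor expansion of $\rho\circ\rho'$ about each base point.

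Concretely, the main step is to verify the matrix identity obtained by substituting the chain rule into \eqref{rovno}: the $(I,J)$ entry of $R(\rho\circ\rho')$ must equal the corresponding entry of the product of $[\widehat\partial_K \rho_{i(I)}]$ with $[\widehat\partial_J \rho'_{i(K)}]$ summed over the intermediate index $K$. Unwinding the exponential form \eqref{hatrho} reduces this to the fact, recalled from \cite{BZF} and made recursive in the coefficients $\beta^{(a)}_{r_1,\ldots,r_n}$ via \cite{GR}, that the exponentiated vector field attached to $\rho\circ\rho'$ combines those attached to $\rho$ and $\rho'$ by the Baker--Campbell--Hausdorff rule; since here all relevant generators commute, that rule collapses to a sum, and the verification becomes a bookkeeping check on the recursion relating the $a$- and $\beta$-coefficients together with the passage from \eqref{prostoryad} to the exponential form intertwined by \eqref{act}.

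I expect the genuine obstacle to lie in this last bookkeeping in the presence of several variables. Whereas in the $n=1$ case of \cite{BZF} the composition of the exponential operators is governed by a single power series, here the mixed derivatives $\partial\rho_a/\partial z_J$ couple all $n$ slots, and one must check that the recursively defined coefficients $\beta^{(a)}_{r_1,\ldots,r_n}$ of the composite transform correctly under the cyclic index operators $i(I)$ appearing in \eqref{rovno}. Once this compatibility is secured, the homomorphism property $R(\rho\circ\rho')=R(\rho)\,R(\rho')$ follows, and with it the assertion that $\rho\mapsto R(\rho)$ is a representation of $\mathrm{Aut}_{z_1,\ldots,z_n}\,\Oo^{(n)}$ on $V^{\otimes n}$.
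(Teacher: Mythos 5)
You should first know that the paper contains no actual proof of this lemma: it is stated inside the proof of Proposition \ref{nezc} in Appendix \ref{proof} with only the words ``Similar to \cite{BZF}, for $n=1$, one proves'', i.e.\ it is asserted by analogy with the one-dimensional representation lemma quoted in Appendix \ref{grading}. Your plan follows the same intended route (the \cite{BZF} template, the exponential form \eqref{hatrho}, the chain rule \eqref{shifty}, commutativity of the generators), and it is considerably more explicit than what the paper offers. However, it is not yet a proof, and the step you dismiss as ``bookkeeping'' is not bookkeeping --- it is the entire content of the lemma, and in the form you state it, it would fail.

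The problematic step is: ``the exponentials combine by adding their exponents; the additivity of exponents is then matched against the Taylor expansion of $\rho\circ\rho'$.'' Composition in ${\rm Aut}_{z_1, \ldots, z_n}\,\Oo^{(n)}$ is substitution, not addition, so adding the exponents of $R(\rho)$ and $R(\rho')$ both expanded at the \emph{same} base point computes the operator attached to the pointwise sum $\rho+\rho'-{\rm id}$, not to $\rho\circ\rho'$. Concretely, in one variable the exponent attached to $\rho$ is (schematically) $\rho(z)-z$; for dilations $\rho(z)=az$, $\rho'(z)=bz$ one has $(\rho\circ\rho')(z)-z=(ab-1)z$, whereas $(\rho(z)-z)+(\rho'(z)-z)=(a+b-2)z$, and these disagree unless $a=1$ or $b=1$. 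Moreover, since all operators in \eqref{hatrho} are commuting exponentials of $L_W(-1)$, the naive additive reading would force $R(\rho)R(\rho')=R(\rho')R(\rho)$ for every pair, so it cannot implement the non-abelian group law that the defining action \eqref{act} is supposed to realize. The correct mechanism is that the exponent of the factor attached to $\rho$ must be re-expanded at the \emph{transformed} coordinates: one needs $e^{(\rho'(z)-z)L_W(-1)}\, e^{(\rho(\rho'(z))-\rho'(z))L_W(-1)} = e^{((\rho\circ\rho')(z)-z)L_W(-1)}$, where the middle exponent is $\rho$'s series evaluated at $\rho'(z)$, not at $z$. Justifying this substitution of coefficient series --- via the $L(-1)$-expansion property \eqref{expansion-fn}--\eqref{power-series} and its convergence, slot by slot, compatibly with the Jacobian factors and the cyclic index conventions of \eqref{rovno} --- is exactly the verification you defer to the end; until it is carried out, your proposal remains a plan rather than a proof, although, to be fair, it is no less complete than the paper's own treatment of this lemma.
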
  
We then conclude with 
\begin{eqnarray}
&&
 \left[ \overline{\Phi} (  
  v_1, z_1\; dz_1;    \ldots;  
   v_n, \ldots, z_n\; dz_n)\right]. 
\end{eqnarray}
Thus the vector $\overline{\Phi}$ \eqref{overphi} is invariant, 
i.e., 
\begin{equation}
\label{phiproperty}
 \overline{\Phi} \left( 
{\bf v}, {\bf w}\; {\bf dw} \right) =  \overline{\Phi} \left( 
{\bf v}, {\bf z} \;{\bf dz}\right).     
\end{equation}

Recall that the construction of the spaces $C^n_m(V, \W)$ of the complex.    
It assumes 
that $\Phi \in C^n_m(V, \W)$ is composable with $m$  
vertex operators. 
In one-dimensional complex case,
 \cite{BZF} they proved that 
 a vertex operator multiplied to the $\wt(v_i)$-power of the differential 
$Y_W (v_i, z_i)\; dz_i^{\wt(v_i)}$    
is invariant with respect to the action of the group ${\rm Aut}_{z_i}\;\Oo^{(1)}$. 
Here we prove that $Y_W (v_i, z_i) dz_i^{\wt(v_i)}$  
is invariant with respect to the change of the local coordinates  
$z_i\mapsto w_i(z_1, \ldots, z_n)$.

 Let $(z_1, \ldots, z_n)$ be an open ball $D^{(n)}_{\bf z_0}$ of local formal coordinates 
around a fixed-value ${\bf z_0}$ of $(z_1, \ldots, z_n)$.   
Define a $\wt(v_i)$-differential 
on $D^{(n)}_{\bf z_0}$ with values in $End \; (W)_{\bf z_0}$ as follows:    
identify $End \; ( W)_{\bf z_0}$ with $End \;\;  W$ 
using the formal parameters $(z_1, \ldots, z_n)$, and set
\[
\omega_{i,x} =  Y_W \left(v_i, z_i \right) \; dz_i^{\wt(v_i)} .     
\] 
Let 
$(w_1, \ldots, w_n) = \left(\rho_1(z_1, \ldots, z_n), \ldots, \rho_n(z_1, \ldots, z_n)\right)$, 
be another set of formal parameters on an $n$-dimensional ball $D^{(n)}_{\bf z_0}$.  
Let us express the set of $\wt(v_i)$-differentials on $D^{(n), \times}_{\bf z_0}$ 
\[
Y_W (v_i, w_i) \; dw_i^{\wt(v_i)}, 
\]
 $i=1, \ldots, n$, in terms of 
 of the parameters $(z_1, \ldots, z_n)$.  
We would like to show that it coincides with the set of $\wt(v_i)$-differentials  
$Y_W(v_i, w_i)\; dz_i^{\wt(v_i)}$. 

Recall the notion of torsors (Section \ref{grading}). 
Consider a vector $(v_i, z_1, \ldots, z_n) \in W_{\bf z_0}$ with $v_i \in  V$.       
Then the same vector equals 
\[
\left(R_i^{-1}\left (\rho_1, \ldots, \rho_n\right) v_i, w_1, \ldots, w_n \right),
\] 
i.e., it is identified with
\[
R_i^{-1}\left(\rho_1, \ldots, \rho_n\right) v_i \in V,  
\]
using the formal parameters $(w_1, \ldots, w_n)$.  
Here $R_i\left(\rho_1, \ldots, \rho_n\right)$ 
is an operator representing transformation of $z_i \to w_i$,  
as an action on $V$. 
Therefore if we have an operator on $W_{\bf z_0}$ which  
is equal to a ${\rm Aut} \; \Oo^{(n)}$-torsor $S$ 
under the identification $End \; \;  W_{\bf w_0} \in End \; \;  W$  
using the formal parameters    
$(w_1, \ldots, w_n)$,
 then this operator equals
\[
R_i\left(\rho_1, \ldots, \rho_n\right) \;S \;R_i^{-1} \left(\rho_1, \ldots, \rho_n \right), 
\] 
under the identification  
$End \; \;   W_{\bf z_0} \in End \; \;  W^{(i)}$ using  
the combined parameters $\left(v_i, z_1, \ldots, z_n\right)$.  
Thus, in terms of $(v_i, z_1, \ldots, z_n)$, 
 the differential 
 $Y_W (v_i, w_i) \; dw_i^{\wt(v_i)}$    
becomes
\[
Y_W(v_i, z_i)\; dz_i^{\wt(v_i)} =R_i(\rho) \;  
Y_W \left(v_i, \rho(z_1, \ldots, z_n) \right)  \;  
R_i^{-1} (\rho) \;  
 dw_i^{\wt(v_i)}.
\] 
According to Definition \eqref{initialspace}, 
elements $\Phi$ are composable with $m$ vertex operators.  
Thus we see that \eqref{overphi} is a canonical object of $C^n_m(V, \W)$. 
We have proved that elements of the spaces ${C}^{n}_{m}(V, \W)$ 
are  independent 
on the choice of formal parameters.   
\end{proof}

\end{document}